 \newcounter{mainthm}
 \newtheorem{thm}{Theorem}[section]
 \newtheorem{lem}[thm]{Lemma}
 \newtheorem{prop}[thm]{Proposition}
 \newtheorem{cor}[thm]{Corollary}
 \theoremstyle{definition}
 \theoremstyle{remark}
 \newtheorem{rmk}[thm]{Remark}
\newtheorem{defn-thm}[thm]{Definition-Theorem}
\newtheorem{defn-lem}[thm]{Definition-Lemma}
\newtheorem{defn-prop}[thm]{Definition-Proposition}
\newcommand{\id}{\mathrm{id}}
\newcommand{\mathds}[1]{\text{\usefont{U}{dsrom}{m}{n}#1}}
\newcommand{\one}{\mathds {1}}
\newcommand{\pr}{\mathrm{pr}}
\newcommand{\ev}{\mathrm{ev}}
\newcommand{\uu}{\mathbf u}
\newcommand{\mg}{\mathbf g}
\newcommand{\eff}{\mathrm{eff}}
\newcommand{\m}{{\mathfrak m}}
\newcommand{\mC}{\mathfrak C}
\newcommand{\oi}{{[0,1]}}
\newcommand{\f}{\mathfrak f}
\newcommand{\g}{\mathfrak g}
\newcommand{\G}{\mathfrak G}
\newcommand{\mJ}{{\mathbf J}}
\DeclareMathOperator{\Sp}{Sp}
\DeclareMathOperator{\diff}{Diff}
\DeclareMathOperator{\val}{val}
\DeclareMathOperator{\trop}{\mathfrak{trop}}
\DeclareMathOperator{\Hom}{Hom}
\DeclareMathOperator{\im}{im}
\titleformat{\paragraph}[runin]{\small\sffamily\bfseries}{}{}{}[]
\titleformat{\subsubsection}[runin]{\itshape\normalsize}{\thesubsubsection \ }{0em}{}[\mbox{ . } ]
\DeclareMathAlphabet{\mathbbm}{U}{bbm}{m}{n}
\begin{document}
	\setlength{\parindent}{15pt}	\setlength{\parskip}{0em}

	\title{Disk counting and wall-crossing phenomenon via family Floer theory}
	\author[Hang Yuan]{Hang Yuan}
\address{
Email: \emph{hyuan@northwestern.edu}}
	\begin{abstract} {\sc Abstract:}  
		We use the wall-crossing formula in the non-archimedean SYZ mirror construction \cite{Yuan_I_FamilyFloer} to compute the Landau-Ginzburg superpotential and the one-pointed open Gromov-Witten invariants for a Chekanov-type Lagrangian torus in any smooth toric Fano compactification of $\mathbb C^n$. It agrees with the works of Auroux, Chekanov-Schlenk \cite{AuTDual, Chekanov_Schlenk}, and Pascaleff-Tonkonog \cite{PT_mutation}.
	\end{abstract}
	\maketitle
	%
	%
	

%

\section{Introduction}

One goal of the family Floer program is to achieve the B-side mirror reconstruction merely in terms of the Floer-theoretic data in the A-side without a priori knowledge of what the mirror is. It is first initiated by Fukaya \cite{FuFamily} and developed by Abouzaid \cite{AboFamilyICM, AboFamilyFaithful, AboFamilyWithout} and Tu \cite{Tu}.
It is also compatible with the Strominger-Yau-Zaslow's T-duality philosophy \cite{SYZ} and the Kontsevich-Soibelman's non-archimedean mirror symmetry \cite{KSTorus, KSAffine}.
Recently, the author further implements the non-archimedean SYZ construction in the family Floer program over the smooth locus with far less restrictive assumptions, including the full quantum corrections of holomorphic disks \cite{Yuan_I_FamilyFloer}.

In this paper, we show that the improvement to include quantum corrections actually leads to concrete enumerative geometry.
The outcomes perfectly accord with the previous works of Auroux \cite{AuTDual}, Chekanov-Schlenk \cite{Chekanov_Schlenk}, and Pascaleff-Tonkonog \cite{PT_mutation}, offering the examples and applications to strongly support the intricate theory in \cite{Yuan_I_FamilyFloer}.
Indeed, as long as we were familiar with the non-archimedean SYZ mirror construction therein and the topological aspects of the Gross's fibration (reviewed in Appendix \ref{S_nonarchimedean_review} and \ref{S_Gross_fibration} respectively), the proof of the main result will be extremely short and concise! But as a cost, the Floer-theoretic foundation in \cite{Yuan_I_FamilyFloer} is lengthy and complicated.

Specifically, we consider a Gross's special Lagrangian fibration \cite{Gross_ex} on $X=\mathbb C^n$ which only has two chambers $B_\pm$ of Clifford and Chekanov tori \cite{Chekanov1996LagrangianTI, eliashberg1997problem}.
The main result --- {Theorem \ref{Main_theorem_superpotential_compute_thm}} --- computes the superpotential for a Chekanov-type torus in any smooth toric Fano compactification $\overline X$ of $X$, such like $\overline X=\mathbb {CP}^n, \mathbb {CP}^r\times \mathbb {CP}^{n-r}$.
In fact, if let $W_\pm$ and $\overline W_\pm$
denote the Clifford/Chekanov superpotentials before and after the compactification, then
the Chekanov superpotential $\overline W_-$ for $\overline X$ is only known for a few examples, while the other three are all previously known \cite{Cho_Oh, CLL12}.

A major advance in \cite{Yuan_I_FamilyFloer} is a much more precise understanding of the wall-crossing phenomenon \cite{AuTDual}.
This is vital for the proof of Theorem \ref{Main_theorem_superpotential_compute_thm}.
For instance, we know there always exists a gluing map $\phi$, explicitly described by the $A_\infty$ structures and exclusively contributed by Maslov-zero disks, that matches the superpotentials in any two adjacent chambers (\S \ref{sss_family_Floer_general}).
In our case, a key observation for the Gross's fibration is that when we compactify $X$ to $\overline X$, the Maslov-zero disks stay the same, while there are extra Maslov-two disks included.
Thus, the wall-crossing formula \cite{Yuan_I_FamilyFloer} deduces that the same gluing map must satisfy both $\phi(W_+)=W_-$ and $\phi(\overline W_+)=\overline W_-$.
The first relation with the $T^{n-1}$-symmetry of the Gross's fibration determines $\phi$, and then the second one can be used to compute $\overline W_-$ explicitly.

For what it is worth, our result seems support a recurring point of view in symplectic topology: the `deformation' of the Floer theory on an open manifold $X$ implies the Floer theory on its compactification $\overline X$. This interplay has been a propulsive force behind numerous important developments: the proofs of homological mirror symmetry for the genus-two curve and the quadratic surface by Seidel \cite{Seidel03, Seidel_deformation, Seidel_genus2} and for projective hypersurfaces by Sheridan \cite{Sheridan15, Sheridan16}, the work of Ganatra-Pomerleano \cite{Ganatra_2016logPSS}, etc. For example, the Tonkonog's work \cite{Tonkonog_From_SH} suggests our work may be related to the ongoing work of Borman-Sheridan-Varolgunes \cite{Borman_Sheridan} via the closed-open maps; compare also \cite[Lemma 2.7]{Sheridan16}.
Moreover, our computation fits well with Pascaleff-Tonkonog's result \cite{PT_mutation} despite of the different method (Remark \ref{PT_rmk}). Indeed, they oppositely avoid any studies of Maslov index zero disks and apply the Seidel's ideas in \cite{Seidel_dynamics} instead.
We do not understand the whole story behind this coincidence at present, but it is certainly very fascinating to explore in the future.

\subsection{Notations}
\label{ss_Notations}
Let $X=\mathbb C^n$.
Fix $\epsilon>0$, and we consider the following special Lagrangian fibration:
\[
\hat\pi: X\to B, \quad  (z_1,\dots, z_n)\mapsto 
\big(
\tfrac{1}{2} (|z_1|^2-|z_n|^2), \dots, \tfrac{1}{2} (|z_{n-1}|^2-|z_n|^2), |z_1z_2\cdots z_n-\epsilon|-\epsilon
\big)
\]
It is an example of a \textit{Gross's fibration} \cite{Gross_ex}.
The general aspects of Gross's fibrations should be mostly standard. But we have included an exposition in \S \ref{S_Gross_fibration} in great details, and there are also some new perspectives.
The Gross's fibration keeps a $T^{n-1}$-symmetry instead of a $T^n$-symmetry. For instance, when $X=\mathbb C^n$, we have a fiber-preserving $T^{n-1}$-action defined by $(e^{i\theta_1},\dots, e^{i\theta_{n-1}}) \cdot (z_1,\dots,z_{n-1}, z_n)=  (e^{i\theta_1}z_1,\dots, e^{i\theta_{n-1}}z_{n-1}, e^{-i(\theta_1+\cdots+\theta_{n-1})} z_n)$.

The base of $\hat\pi$ is $B= \mathbb R^{n-1} \times [-\epsilon, +\infty)$; the discriminant locus is $\Gamma:=\partial B\cup ( \Pi\times\{0\})$ where $\Pi$ is the tropical hyperplane that consists of those $(\lambda_1,\dots, \lambda_{n-1})\in\mathbb R^{n-1}$ such that either $\min(\lambda_i)$ is attained twice or $\min(\lambda_i)=0$ (see \cite{AuSurvey}).
The smooth locus is $B_0 :=B\setminus \Gamma$, and we denote by 
\begin{equation}
\label{Gross_fibration_begin}
\pi:X_0\to B_0
\end{equation}
the restriction of $\hat\pi$ over $B_0$, where $X_0:=\hat\pi^{-1}(B_0)\subset X$.
It is a special Lagrangian torus fibration with respect to the holomorphic $n$-form $\Omega=(z_1\cdots z_n-\epsilon)^{-1} dz_1\wedge \cdots \wedge dz_n$.
A special Lagrangian fiber of $\pi$ is called a \textit{Gross's fiber}.
Define the divisors $D_i=\{z_i=0\}$ ($1\le i\le n$) and $\mathscr E=\{z_1z_2\cdots z_n=\epsilon\}$ in $X=\mathbb C^n$.
We define $H: =(\mathbb R^{n-1}\setminus \Pi)\times \{0\}$ and call it the \textit{wall}.
The tropical hyperplane $\Pi$ separates the wall  into $n$ different connected components $H_i$ ($1\le i\le n$), where the label $i$ is chosen so that $H_i\subset \pi(D_i)$. 
For $n=3$, one can check that $H_1=\{\lambda_1<0, \lambda_1<\lambda_2\}$, $H_2=\{\lambda_2<0, \lambda_1>\lambda_2\}$, and $H_3=\{\lambda_1>0,\lambda_2>0\}$.
A torus fiber $L_q$ over $q=(q_1,q_2)\in B_0$ bounds non-trivial Maslov index zero holomorphic disks if and only if $q_2=0$, i.e. $q\in H$ (Lemma \ref{Maslov_zero_locate_lem}).

\begin{figure}
	\centering
	\begin{subfigure}{0.4\textwidth}
		\includegraphics[scale=0.24]{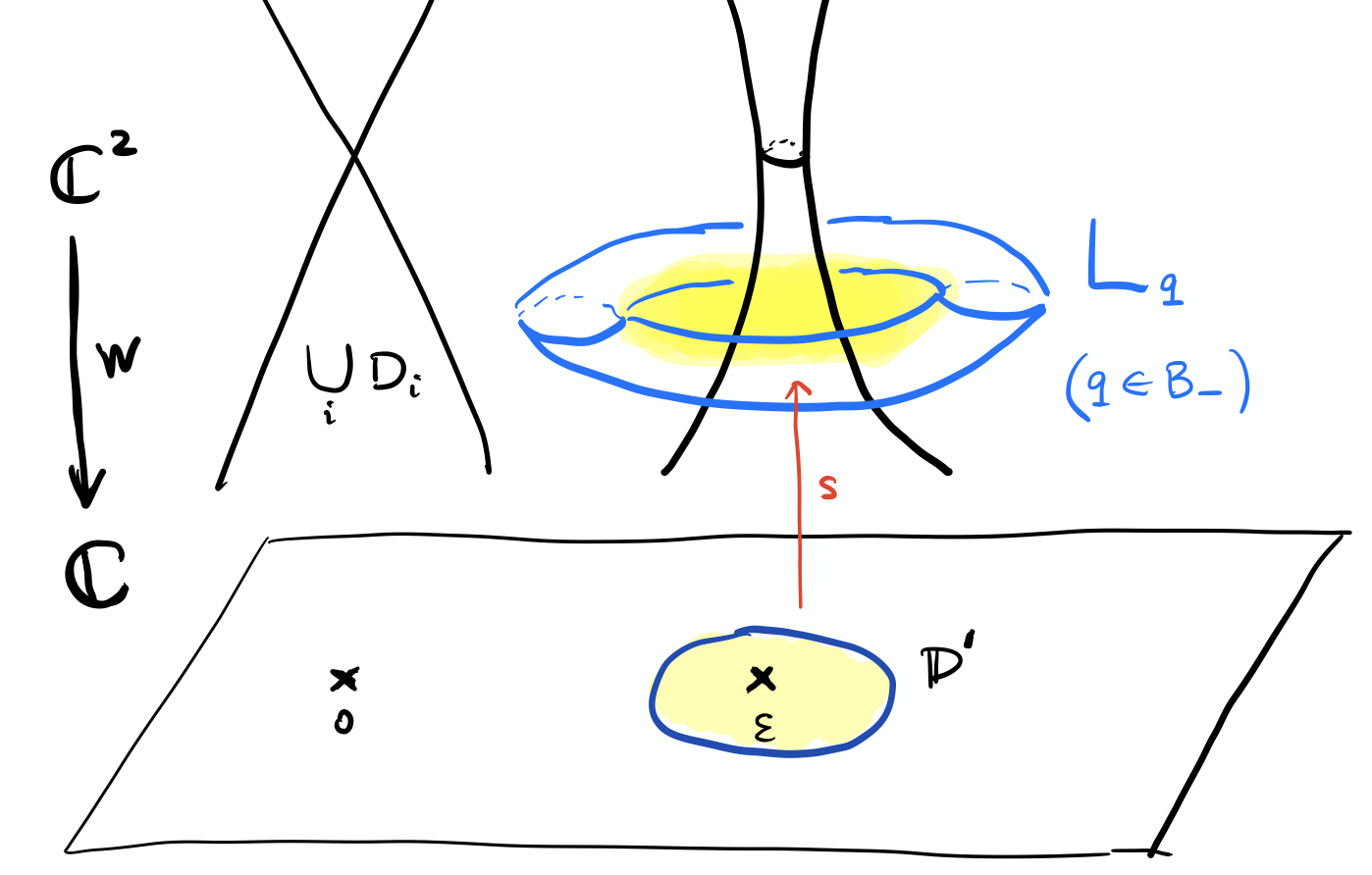}
		\caption{$X=\mathbb C^2$ with the visualization in \cite{AuTDual}. The shadowed disk represents $\hat\beta$.}
	\end{subfigure}
	\begin{subfigure}{0.55\textwidth}
		\includegraphics[scale=0.35]{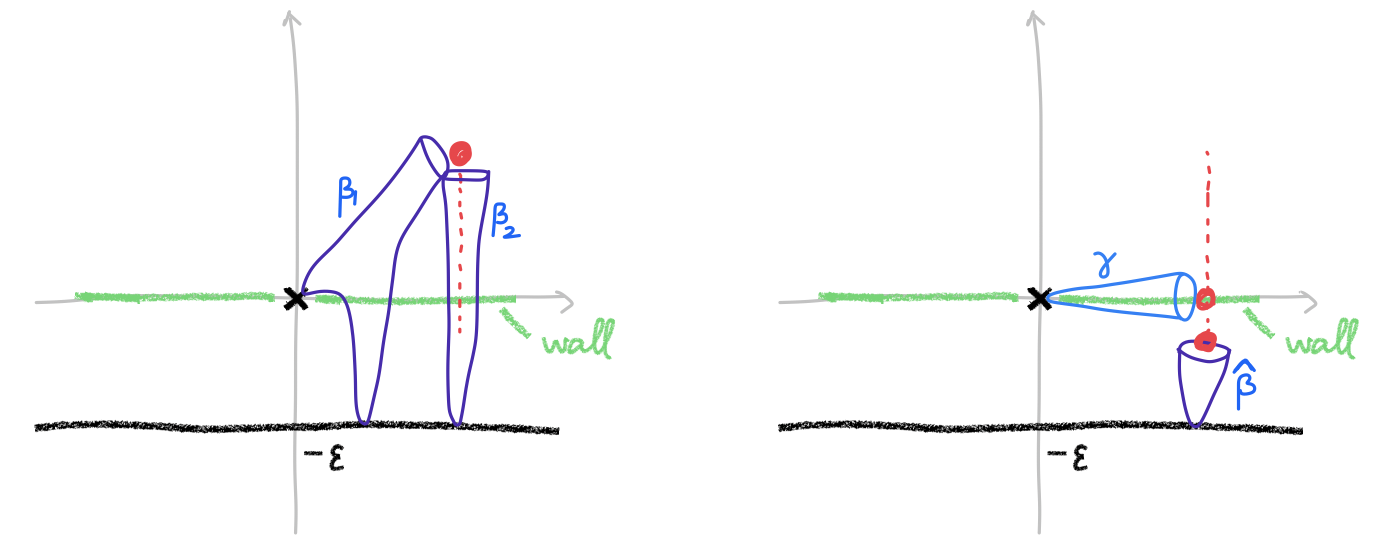}
		\caption{An illustration of (\ref{wall_cross_beta_Cn_eq}) when $X=\mathbb C^2$. $B_+=\mathbb R\times (0,+\infty)$ and $B_-=\mathbb R\times (-\epsilon,0)$.}
		\label{figure_B_-_monodromy_class}
	\end{subfigure}
	\caption{The holomorphic disk in $\hat\beta$ and the related wall-crossing}
	\label{figure_B_-_class}
\end{figure}

Let $N=\mathbb Z \{e_1,\dots, e_n\}$ and $M=\mathbb Z \{e'_1,\dots, e'_n\}$ be two dual lattices of rank $n$. We choose a fan structure $\Sigma$ for $X=\mathbb C^n$ to be the one spanned by $-e_1,\dots, -e_n$. Consider a smooth and complete extension $\overline \Sigma$ of $\Sigma$ with additional rays $v_1,\dots,v_m$. The number of rays in $\overline \Sigma$ is $n+m$.
Let $\overline X$ be the toric variety associated to the fan $\overline \Sigma$, then, it is a smooth toric compactification of $X=\mathbb C^n$. We further require $\overline X$ is Fano. For example, when $m=1$ and $v_1=e_1+\cdots +e_n$, we have $\overline X=\mathbb {CP}^n$; when $m=2$, $v_1=e_1+\cdots+e_r$, and $v_2=e_{r+1}+\cdots+ e_n$, we have $\overline X=\mathbb {CP}^r\times \mathbb {CP}^{n-r}$.
Next, we define
\[
B_+=\{q=(q_1,q_2)\mid q_2>0\}=\mathbb R^{n-1}\times (0,+\infty)
\]
\[
B_-=\{q=(q_1,q_2)\mid q_2<0\} =\mathbb R^{n-1}\times (-\epsilon,0)
\]
Then, $B_0=B_+\sqcup H\sqcup B_0$.
We call the $B_+$ (resp. $B_-$) the chamber of Clifford tori (resp. Chekanov tori).
No matter in $X$ or $\overline X$, we say the torus fiber $L_q$ is of \textit{Clifford type} if $q_2>0$ and it is of \textit{Chekanov type} if $q_2<0$. Here we follow the terms in \cite{AuTDual}; the second kind is studied in \cite{Chekanov1996LagrangianTI, eliashberg1997problem}.

Let $D_1,\dots, D_n, D'_1,\dots, D_m'$ be the irreducible toric divisors associated to the rays $-e_1,\dots, -e_n$, $v_1,\dots, v_m$.
Note that $X$ embeds into $\overline X$, and
$\overline D:=\bigcup_a D'_a = \overline X\setminus X$.
The closures of $D_i$ and ${\mathscr E}$ in $\overline X$ are still denoted by $D_i$ and $\mathscr E$ respectively.
For a fiber $L$, consider the following natural exact sequences:
\begin{equation}
\label{exact_seq_pi_2_X_bar}
0\to \pi_2(\overline X)\to \pi_2(\overline X, L)\xrightarrow{\partial} \pi_1(L)\to 0
\end{equation}
\begin{equation}
\label{exact_seq_Pic}
0\to \mathrm{Pic}(\overline X)^\vee \to \mathbb Z^{n+m} \to N \to 0
\end{equation}
Notice that $H_2(\overline X;\mathbb Z)\cong \pi_2(\overline X)\cong \mathrm{Pic}(\overline X)^\vee$ and $\mathrm{Pic}(\overline X)\cong \mathrm{Span}\{D_1',\dots,D'_m\}$.

If $L$ is of Clifford type, it can deform into a product torus by a Hamiltonian isotopy inside $(\mathbb C^*)^n$. Thus, there is a canonical isomorphism $\pi_1(L)\cong N$; also, $\pi_2(\overline X, L)$ is naturally isomorphic to $\mathbb Z^{n+m}$ via $\beta\mapsto (\beta \cdot D_i \ ; \beta\cdot D'_a)$.
In reality, we have the Maslov-two classes $\beta_1,\dots,\beta_n$ and $\beta'_1,\dots,\beta'_m$ in $\pi_2(\overline X,L)$ such that $\partial\beta_i = -e_i$ for $1\le i \le n$ and $\partial\beta'_a=v_a$ for $1\le a\le m$.
Also,
$\beta_i\cdot D_j=\delta_{ij}$, $\beta_a'\cdot D_b'=\delta_{ab}$, and $\beta_i\cdot D'_b=\beta'_a\cdot D_j=0$. Besides, $\beta_i\cdot \mathscr E=1$ and $\beta'_a\cdot \mathscr E=0$. By \cite{Cho_Oh}, their open Gromov-Witten invariants are all equal to one: $\mathsf n_{\beta_i}=\mathsf n_{\beta'_a}=1$.
On the other hand, the sequence (\ref{exact_seq_pi_2_X_bar}) is actually splitting. Given $1\le a\le m$, there exist $v_{a1},\dots, v_{an}\in\mathbb Z$ such that $\partial\beta'_a=v_a=v_{a1} e_1+v_{a2}e_2+\cdots+ v_{an} e_n\in N$, so we can find some $\mathcal H_a\in \pi_2(\overline X)$ with 
\begin{equation}
\label{H_a_eq}
\mathcal H_a=\beta_a'+v_{a1}\beta_1+\cdots +v_{a(n-1)}\beta_{n-1}+v_{an} \beta_n
\end{equation}
and the $\mathcal H_a, 1\le a\le m$ and $\beta_i, 1\le i\le n$ form a basis of $\pi_2(\overline X,L)$.

If $L$ is of Chekanov type, there is no canonical isomorphism $\pi_1(L)\cong N$ or $\pi_2(\overline X, L)\cong \mathbb Z^{n+m}$, as the monodromy issue occurs.
To fix notations, we choose a path $\sigma:\oi\to B_0$ that passes through the wall component $H_n$ with $\sigma(0)\in B_+$ and $\sigma(1)\in B_-$. (Taking a different wall component $H_{n'}$ with $n'\neq n$ will yield different but almost equivalent results.) Clearly, the path induces an isomorphism $\mathscr P: \pi_2(\overline X, L_{\sigma(0)})\cong \pi_2(\overline X, L_{\sigma(1)})$.
All the holomorphic disks that is bounded by $L$ are contained in a rank-one subgroup of $\pi_2(\overline X, L)$ (Lemma \ref{holo_disk_B_-_lem}), and we denote by $\hat\beta$ the preferred generator.
See Figure \ref{figure_B_-_class}.
Note that $\hat\beta\cdot D_i=0$, $\hat\beta\cdot D_k'=0$, and $\hat\beta\cdot \mathscr E=1$.
Moreover, $\mathscr P(\beta_n) = \hat\beta$ (Lemma \ref{pi2_monodromy_lem}), and we write $\gamma_k:= \mathscr P (\beta_k-\beta_n)$ for $k=1,2,\dots, n-1$.
Eventually, abusing the notations, we would rather write:
\begin{equation}
\label{wall_cross_beta_Cn_eq}
\beta_n=\hat\beta, \qquad \gamma_k :=\beta_k-\hat\beta \quad (1\le k\le n-1)
\end{equation}
Combining (\ref{H_a_eq}) with (\ref{wall_cross_beta_Cn_eq}) yields that
\begin{equation}
\label{beta'_a_express_eq}
\beta'_a= \mathcal H_a- p_a \hat\beta -\gamma(a)
\end{equation}
where we set $p_a:=\sum_{i=1}^n v_{ai}\in\mathbb  Z$ and $\gamma(a) :=\sum_{k=1}^{n-1} v_{ak}\gamma_k \in\pi_2(\overline X,L)$ has Maslov index zero.

\subsection{Family Floer mirror: a brief outline}
\label{ss_Family_Floer_outline}

\subsubsection{General aspects}
\label{sss_family_Floer_general}
The main theorem in \cite{Yuan_I_FamilyFloer} implements the non-archimedean SYZ mirror construction with full quantum corrections for a \textit{semipositive} smooth Lagrangian fibration $\pi:X_0\to B_0$ in an ambient symplectic manifold $X$. The semipositive condition says that any stable disk in $X$ that is bounded by a fiber of $\pi$ has a nonnegative Maslov index.
We emphasize that the construction depends on the ambient space $X$. In fact, although the Lagrangian fibers are smooth and contained in $X_0$, the holomorphic disks sweep in $X$ and often meet the singular fibers.

The outcome of the mirror construction is a triple $\mathbb X^\vee=(X^\vee, W^\vee, \pi^\vee)$ consisting of a rigid analytic space $X^\vee$, a globally-defined superpotential function $W^\vee$, and a dual fibration map $\pi^\vee: X^\vee \to B_0$.
Instead of describing the subtle construction in detail, we just indicate several features and properties below, which should be sufficient for our purpose. (We also give a review in Appendix \ref{S_nonarchimedean_review}.)

The mirror rigid analytic space $X^\vee$ is first set-theoretically $\bigsqcup_{q\in B_0} H^1(L_q; U_\Lambda)$, where
$
U_\Lambda
$
is the Novikov unitary group which consists of all norm-one elements in the Novikov field $\Lambda$.
Its rigid analytic space structure is determined as follows: The dual map $\pi^\vee$ is locally modeled on the map $\trop=\val^n :(\Lambda^*)^n\to \mathbb R^n$, where $\val$ is the valuation map on the Novikov field $\Lambda=\mathbb C((T^{\mathbb R}))$ and $\Lambda^*=\Lambda\setminus\{0\}$. The two adjacent fibers of $\trop$ connect with each other via $\phi_0: y_i\mapsto T^{c_i}y_i$, while the two adjacent fibers of $\pi^\vee$ connect with each other via a gluing map in the form $\phi: y_i\mapsto T^{c_i} y_i \exp(F_i(y_1,\dots, y_n))$ for some extra formal power series $F_i$ which encodes the quantum corrections of Maslov-zero disks.
If the two corresponding Lagrangian fibers is connected by a Lagrangian isotopy that does not bound any Maslov-zero disk, then there is no wall-crossing, and the gluing map $\phi$ goes back to $\phi_0$.
Consequently, if for $U\subset B_0$ we have an integral affine chart $\psi: (U,q) \xhookrightarrow{} (\mathbb R^n, 0)$ such that the torus fibers over $U$ do not bound any nontrivial Maslov-zero holomorphic disks, then there is an isomorphism $\pi^\vee|_U\cong \trop|_{\psi(U)}$.
When $U=\psi(U)$ happens to be a rational polyhedron, the points in the preimage $\trop^{-1}(U)$ can be identified with the set of maximal ideals in an affinoid algebra in $\Lambda[[\pi_1(L_q)]]\cong \Lambda[[Y_1^\pm,\dots, Y_n^\pm]]$.
In general, a gluing map between two adjacent dual fibers over $q, \tilde q\in B_0$ can be intrinsically expresses as
\begin{equation}
	\label{gluing_intrinsic_eq}
	\phi: \Lambda[[\pi_1(L_q)]]\to \Lambda[[\pi_1(L_{\tilde q})]], \qquad Y^\alpha \to T^{\langle \alpha, \tilde q-q\rangle } Y^{\tilde \alpha} \exp \langle \tilde \alpha , \pmb F(Y)\rangle 
\end{equation}
where $\pmb F(Y)\in \Lambda[[Y_1^\pm, \dots, Y_n^\pm]]\hat\otimes H^1(L)$ is determined only by the virtual counts of Maslov-zero disks, and $\alpha\in \pi_1(L_q)$ and $\tilde \alpha \in \pi_1(L_{\tilde q})$ are matched with each other via a small isotopy.

On the other hand, let $\mathsf n_\beta$ denote the \textit{open GW invariant} of $\beta\in \pi_2(X,L_q)$ with $\mu(\beta)=2$, i.e. the counts of Maslov-two holomorphic disks (see \S \ref{ss_OGW}). We first define the local superpotentials:
\begin{equation}
\label{superpotential_intro_local_eq}
W^\vee|_q=\sum T^{E(\beta)}Y^{\partial\beta}\mathsf n_{\beta} \in \Lambda[[\pi_1(L_q)]]
\end{equation}
By the wall-crossing formula in \cite{Yuan_I_FamilyFloer}, all the local superpotentials $W^\vee|_q$ ($q\in B_0$) can be patched together via the various gluing maps in the form of (\ref{gluing_intrinsic_eq}) to produce the global superpotential $W^\vee$.

\subsubsection{Wall-crossing for the Gross's fibrations}
\label{sss_wall_cross_Gross}

In our situation, we work with the Gross's fibration $\pi:X_0\to B_0$ in (\ref{Gross_fibration_begin}).
First, we choose the ambient space to be $X=\mathbb C^n$. By \cite[Lemma 3.1]{AuTDual}, one can easily check that $\pi$ is semipositive in $X$.
So, we have the family Floer mirror triple as above, denoted by $\mathbb X^\vee= (X^\vee, W^\vee, \pi^\vee)$.
The Lagrangian fiber $L_q$ bounds a nontrivial Maslov-zero holomorphic disk if and only if $q\in H$ (Lemma \ref{Maslov_zero_locate_lem}). So, the discussion in \S \ref {sss_family_Floer_general} tells that the wall $H$ separates the mirror analytic space $X^\vee$ into the two chambers $X_\pm^\vee:=(\pi^\vee)^{-1}(B_\pm)\cong \trop^{-1}(B_\pm)$; they are glued along $(\pi^\vee)^{-1}(H)$ by a gluing map $\phi$ in the form (\ref{gluing_intrinsic_eq}) with a nontrivial series $\pmb F(Y)$.
Abusing the terminologies, we also call $X_\pm^\vee$ the Clifford/Chekanov chamber.

Next, we choose the ambient space to be $\overline X$. Since it is Fano, the $\pi:X_0\to B_0$ is also semipositive in $\overline X$.
More importantly, we observe that the collection of Maslov-zero holomorphic disks stay the same no matter what ambient space we choose (c.f. \cite[\S 5]{AuTDual}). It follows that all the gluing maps $\phi$ in (\ref{gluing_intrinsic_eq}) keep unchanged as well.
Thus, just by definition, the new mirror triple $\overline {\mathbb X}^\vee=(X^\vee, \overline W^\vee, \pi^\vee)$ with respect to $\overline X$ differs from the previous one $\mathbb X^\vee$ merely in the superpotential; the new Maslov-two holomorphic disks after the compactification contribute to the extra monomials in $\overline W^\vee-W^\vee$.

Now, let $W_\pm^\vee$ (resp. $\overline W_\pm^\vee$) denote the restrictions of $W^\vee$ (resp. $\overline W^\vee$) over the two chambers $X_\pm^\vee$.
By \cite{Cho_Oh}, we know $\mathsf n_{\beta_i}=1$, thus, the Clifford superpotential over $B_+$ for $X=\mathbb C^n$ is given by
\begin{align}
\label{W_+_eq}
W_+^\vee= 
T^{E(\beta_1)}Y^{\partial\beta_1} + \cdots + T^{E(\beta_n)}Y^{\partial\beta_n} 
= T^{E(\hat\beta)} Y^{\partial \hat\beta}
\left(
1+ T^{E(\gamma_1)} Y^{\partial\gamma_1} +\cdots + T^{E(\gamma_{n-1})} Y^{\partial\gamma_{n-1}}
\right)
\end{align}
where we also use (\ref{wall_cross_beta_Cn_eq}).
The Novikov coefficients $T^{E(\beta_i)}$ actually depend on a base point $q$, but as discussed before, the amounts of changes for choosing another base point $q'$ within $B_+$ can be exactly made up by some map $\phi_0: y_i\mapsto T^{c_i}y_i$.
For the Chekanov superpotential, only $\mathsf n_{\hat\beta}\neq 0$ (see Lemma \ref{holo_disk_B_-_lem}). It is also proved in \cite[Lemma 4.31]{CLL12} \cite{AuTDual} that $\mathsf n_{\hat\beta}=1$. Hence,
\begin{align}
\label{W_-_eq}
W_-^\vee= T^{E(\hat\beta)} Y^{\partial\hat\beta}
\end{align}
For an independent interest, we will use the family Floer theory to give a totally different proof of the fact $\mathsf n_{\hat\beta}=1$ in Theorem \ref{wall_cross_identity_intro_thm} without explicitly finding the disk like \cite{CLL12} \cite{AuTDual}.

On the other hand, by \cite{Cho_Oh} again, we also know $\mathsf n_{\beta_a'}=1$, thus, the new Clifford superpotential is
\begin{equation}
\label{W_+_overline_eq}
\overline W_+^\vee =W_+^\vee + \sum_{a=1}^m T^{E(\beta'_a)} Y^{\partial\beta'_a}
\end{equation}
However, as far as we know, the superpotential $\overline W_-^\vee$ (and the related open GW invariants) of the Chekanov tori for most examples are unknown.
A partial list of known examples is as follows: 

\begin{itemize}
	\item $\overline X=\mathbb {CP}^1\times \mathbb {CP}^1, \mathbb {CP}^2$. Then, $\overline W^\vee_-$ is found by Auroux, Chekanov-Schlenk \cite[5.7 \& 5.12]{AuTDual}.
	\item $\overline X=\mathbb {CP}^n$. Then, $\overline W^\vee_-$ over $\mathbb C$ is in Pascaleff-Tonkonog \cite[Theorem 1.4]{PT_mutation}. See Remark \ref{PT_rmk}.
\end{itemize}

The main theorem below gives a complete answer: we are able to find $\overline W^\vee_-$ for any smooth Fano toric compactification $\overline X$ (not just $\mathbb {CP}^n$) of $X=\mathbb C^n$. Especially, our computation of $\overline W_-^\vee$ is over the Novikov field $\Lambda$ rather than over $\mathbb C$. This is crucial to check a folklore conjecture for the critical values of the superpotential (c.f. \cite{Yuan_c_1_2021family, Yuan_local_SYZ_2022family}).

\begin{rmk}
	The overall line of ideas for our main result goes as follows:
First, by \cite{Cho_Oh}, we can find the Clifford superpotentials $W_+^\vee$ and $\overline W_+^\vee$ for both $X$ and $\overline X$.
Second, for $X$, one can also directly compute the Chekanov superpotential $W_-^\vee$, say, by maximal principle (c.f. \cite[Proposition 4.32]{CLL12}).
Third, one can check the Maslov-0 disks keep the same in $X$ and $\overline X$, so the general theory of \cite{Yuan_I_FamilyFloer} ensures that the two pairs $(W_-^\vee, W_+^\vee)$ and $(\overline W_-^\vee, \overline W_+^\vee)$ are related by the same gluing map. Together with the $T^{n-1}$-symmetry of the Gross's fibration, we finally deduce the Chekanov superpotential $\overline W_-^\vee$ for $\overline X$.
\end{rmk}

\section{Main theorem and its proof}

Now, we state the main theorem of this paper with the notations introduced above.
(One may better appreciate the theorem, putting it together with the examples in \S \ref{S_example}, notably 
Remark \ref{PT_rmk}.)


\begin{thm} \label{Main_theorem_superpotential_compute_thm}
Let $\pi:X_0\to B_0$ be the Gross's fibration in (\ref{Gross_fibration_begin}). For the smooth toric Fano variety $\overline X$, the superpotential function over the Chekanov chamber $B_-$ is given by
\begin{align*}
	\overline W_-^\vee 
	=
	T^{E(\hat\beta)}Y^{\partial\hat\beta} + \sum_{a=1}^m T^{E(\beta'_a)} Y^{-p_a \partial\hat\beta-\partial\gamma(a)}
	\Big( 1+ \sum_{k=1}^{n-1} T^{E(\gamma_k)} Y^{\partial\gamma_k}
	\Big)^{p_a}
\end{align*}
Hence, we obtain all the one-pointed open GW invariants for a Chekanov-type Lagrangian torus $L_-$.
Specifically, for any Maslov index two topological class $\beta\in \pi_2(\overline X,L_-)$, we know whether or not there exists a holomorphic disk $u$ with $[u]=\beta$ as well as the number of its virtual counts if exists.
\end{thm}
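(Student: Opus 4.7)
The plan is to exploit two facts from the preceding discussion. First, by the wall-crossing formula of \cite{Yuan_I_FamilyFloer}, the identifications of the mirrors across $H$ in the two situations $X$ and $\overline X$ are realized by the \emph{same} gluing map $\phi$ of the form (\ref{gluing_intrinsic_eq}), since $\phi$ depends only on the Maslov-zero disk counts and these are unchanged by the Fano compactification (\S\ref{sss_wall_cross_Gross}). Second, the wall-crossing formula asserts $\phi(W_+^\vee)=W_-^\vee$ and $\phi(\overline W_+^\vee)=\overline W_-^\vee$. Since $W_\pm^\vee$ and $\overline W_+^\vee$ are already known from (\ref{W_+_eq}), (\ref{W_-_eq}), and (\ref{W_+_overline_eq}), my strategy is to first pin down $\phi$ from the pair $(W_+^\vee, W_-^\vee)$ and then apply it to $\overline W_+^\vee$.

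To determine $\phi$, I would use the $T^{n-1}$-symmetry of the Gross fibration. The action preserves $\pi$ and a compatible invariant almost complex structure, so the Maslov-zero disk counts---and hence the quantum-correction series $\pmb F(Y)$ in (\ref{gluing_intrinsic_eq})---lie in the invariant subalgebra. The $T^{n-1}$-invariant monomials in $\Lambda[[\pi_1(L)]]$ are precisely those generated by $Y^{\partial\gamma_1},\ldots, Y^{\partial\gamma_{n-1}}$. Combined with (\ref{gluing_intrinsic_eq}) and the convention (\ref{wall_cross_beta_Cn_eq}), this forces
\begin{equation*}
\phi(Y^{\partial\gamma_k}) = Y^{\partial\gamma_k}, \qquad \phi(Y^{\partial\hat\beta}) = Y^{\partial\hat\beta} \cdot h(Y^{\partial\gamma_1},\ldots, Y^{\partial\gamma_{n-1}})
\end{equation*}
for some unknown series $h$ (the Novikov factors $T^{\langle \alpha, \tilde q-q\rangle}$ are absorbed into the basepoint normalization). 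Substituting into $\phi(W_+^\vee)=W_-^\vee$ via (\ref{W_+_eq}) and (\ref{W_-_eq}) forces $h = \big(1+\sum_k T^{E(\gamma_k)}Y^{\partial\gamma_k}\big)^{-1}$, and so $\phi$ is completely determined.

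With $\phi$ in hand, the computation of $\overline W_-^\vee = \phi(\overline W_+^\vee)$ is mechanical. By (\ref{beta'_a_express_eq}), together with the fact that $\mathcal H_a\in\pi_2(\overline X)$ has $\partial\mathcal H_a=0$, one has $\partial\beta'_a = -p_a\partial\hat\beta - \partial\gamma(a)$, so applying $\phi$ term-by-term to (\ref{W_+_overline_eq}) yields the claimed formula. The open Gromov--Witten statement comes for free: expanding each factor $\big(1+\sum_k T^{E(\gamma_k)}Y^{\partial\gamma_k}\big)^{p_a}$ by the multinomial theorem exhibits $\overline W_-^\vee$ as a Laurent expansion indexed by Maslov-two classes $\beta\in\pi_2(\overline X,L_-)$, and by (\ref{superpotential_intro_local_eq}) the coefficient of $T^{E(\beta)}Y^{\partial\beta}$ is exactly $\mathsf n_\beta$.

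The main obstacle, I expect, is not the algebra sketched above but ensuring that the hypotheses of \cite{Yuan_I_FamilyFloer} genuinely apply here: semipositivity of $\pi$ in both $X$ and $\overline X$ (using the Fano assumption), the invariance of Maslov-zero disk counts under compactification (any Maslov-zero class meeting an added divisor $D'_a$ would violate positivity in the Fano case), and choosing $T^{n-1}$-equivariant Floer data so that the symmetry argument pinning down $\phi$ is legitimate. Once these foundational points are in place, the theorem reduces to the short computation just sketched.
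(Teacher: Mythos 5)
Your overall strategy is exactly the paper's: the same gluing map $\phi$ satisfies both $\phi(W_+^\vee)=W_-^\vee$ and $\phi(\overline W_+^\vee)=\overline W_-^\vee$, one pins down $\phi$ from the first relation, and the second then yields $\overline W_-^\vee$; the final algebra with $\partial\beta'_a=-p_a\partial\hat\beta-\partial\gamma(a)$ and the multinomial expansion is also exactly right. The gap is in the step where you claim the $T^{n-1}$-symmetry ``forces $\phi(Y^{\partial\gamma_k})=Y^{\partial\gamma_k}$.'' What you actually need is the identity $\langle\partial\gamma_k,\pmb F\rangle=0$ (equation (\ref{F_gamma_eq}) in the paper), and this is a statement about the \emph{$H^1(L)$-valued coefficients} of $\pmb F$, not about which monomials $Y^{\partial\gamma}$ occur in it. Your argument only addresses the latter: the fact that $\pmb F$ is a series in $Y^{\partial\gamma_1},\dots,Y^{\partial\gamma_{n-1}}$ is automatic (Maslov-zero classes are spanned by the $\gamma_k$'s), and ``invariance'' of monomials under the $T^{n-1}$-action is vacuous since a torus acting on itself induces the identity on $\pi_1(L)$ and on $H^1(L)$. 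With only the single relation $\phi(W_+^\vee)=W_-^\vee$ and the $n$ unknowns $\langle\partial\hat\beta,\pmb F\rangle,\langle\partial\gamma_1,\pmb F\rangle,\dots,\langle\partial\gamma_{n-1},\pmb F\rangle$, the system is underdetermined unless you prove the $n-1$ vanishings independently.

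The input that closes this gap is geometric, not formal: every Maslov-zero holomorphic disk bounded by a wall fiber $L_q$ is contained in $w^{-1}(0)=\bigcup_i D_i$, so its boundary lies in the single $T^{n-1}$-orbit $T'=L_q\cap w^{-1}(0)\cong S_q(\pi)$ (Lemma \ref{Maslov_zero_locate_lem} and Corollary \ref{Maslov_zero_bdry_cor}). Hence the evaluation map on the Maslov-zero moduli spaces is supported in $T'$, so each coefficient $\mC^F_{0,\gamma}\in H^1(L)\cong H_{n-1}(L)$ is proportional to the class dual to $T'$, which pairs to zero with every $\partial\gamma_k\in\pi_1(T')$ (Lemma \ref{s_q_theta_describe_lem}). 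A genuinely equivariant version of your idea (choose $T^{n-1}$-invariant $J$, note that the image of the equivariant evaluation map is a union of $(n-1)$-dimensional orbits, each homologous to $T'$) would also work, but you must run it at the level of the pushforward class in $H_{n-1}(L)$, not at the level of invariant monomials. Once (\ref{F_gamma_eq}) is in place, the rest of your computation goes through verbatim.
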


\begin{proof}
Take a gluing map $\phi$ along $(\pi^\vee)^{-1}(H_n)$ associated to a path $\sigma$ in $B_0$ across $H_n$. Without loss of generalities, we may identify $\pi_2(\overline X, L_{\sigma(0)})\cong \pi_2(\overline X, L_{\sigma(1)})$ and $\pi_1(L_{\sigma(0)})\cong \pi_1(L_{\sigma(1)})$; we may also omit the Novikov field coefficient $T^{\langle \alpha, \tilde q-q\rangle}$ in (\ref{gluing_intrinsic_eq}) thanks to the Fukaya's trick. Namely, we may assume the gluing map takes the form $\phi: Y^\alpha \mapsto Y^\alpha \exp \langle \alpha, \pmb F\rangle$ where $\alpha\in \pi_1(L)$.
Since $\pmb F(Y)$ is a $H^1(L)$-valued formal series determined by the Maslov-zero disk counting, the compactification does not affect $\phi$ (see \S \ref{sss_wall_cross_Gross}). Thus, the wall crossing formula in the family Floer program \cite{Yuan_I_FamilyFloer} implies that
\begin{equation}
\label{phi_X_eq}
\phi(W^\vee_+)=W^\vee_-
\end{equation}
\begin{equation}
\label{phi_X_overline_eq}
\phi(\overline W^\vee_+)=\overline W^\vee_-
\end{equation}

Now, we claim that
\begin{equation}
	\label{F_gamma_eq}
	\langle \partial\gamma_k, \pmb F\rangle=0 \qquad   \text{for} \ k=1,2,\dots,n-1
\end{equation}

	It is essentially due to the second paragraph of the proof of \cite[Theorem 8.4]{AAK_blowup_toric}, and we only give a sketch here.
	A Gross's fiber can be decomposed into an $S^1$-family of $T^{n-1}$-orbits (\S \ref{sss_Gross_fibration_defn}). The key geometric observation is that the boundary of a Maslov-zero holomorphic disk is always contained in a $T^{n-1}$-orbit. Indeed, let $u$ be a Maslov index zero holomorphic disk that is bounded by a Gross's fiber $L\cong T^n$. The class $\gamma:=[u]$ is a linear combination of $\gamma_1,\dots,\gamma_{n-1}$;
	the boundary $\partial u$ is contained in $T':=L\cap \bigcup_i D_i \cong T^{n-1}$ (Corollary \ref{Maslov_zero_bdry_cor}), and so the evaluation map $\ev: \mathcal M_{1,\gamma}(L)\to L$ is supported in the sub-torus $T'$.
	Now, up to a scalar, a monomial in $\pmb F$ is a class in $H^1(L)\cong H_{n-1}(L)$ defined by the counts of Maslov-zero disks. Namely, it is given by the pushforward of the evaluation map on the moduli spaces and is therefore dual to $T'$ in $L\cong T^n$.
	Finally, since $\pi_1(T')=H_1(T')\cong \mathbb Z^{n-1}$ is exactly generated by $\partial\gamma_1,\dots,\partial\gamma_{n-1}$ (Lemma \ref{s_q_theta_describe_lem}), we have $\langle \partial\gamma_k,\pmb F\rangle=0$.

Notice that
$\exp \langle \partial\hat\beta, \pmb F\rangle  \big(
1+ T^{E(\gamma_1)} Y^{\partial\gamma_1} \exp \langle \partial\gamma_1, \pmb F\rangle +\cdots + T^{E(\gamma_{n-1})} Y^{\partial\gamma_{n-1}} \exp \langle \partial\gamma_{n-1}, \pmb F\rangle
\big)=1$ by virtue of (\ref{W_+_eq}) (\ref{W_-_eq}) (\ref{phi_X_eq}). Now, the gluing map $\phi$ is determined, since it follows from (\ref{F_gamma_eq}) that
\begin{equation}
\label{C2_identity_eq}
\exp \langle -\partial\hat\beta, \pmb F\rangle  = 
1+ T^{E(\gamma_1)} Y^{\partial\gamma_1} +\cdots +T^{E(\gamma_{n-1})} Y^{\partial\gamma_{n-1}}
\end{equation}
As $\overline W_+^\vee$ is known (\ref{W_+_overline_eq}), one can further use (\ref{phi_X_overline_eq}) to compute $\overline W_-^\vee$ directly. The proof is complete.
\end{proof}

\begin{rmk}
First, knowing the gluing map $\phi$ takes the form (\ref{gluing_intrinsic_eq}) is extremely crucial for the proof.
Second, the two indispensable relations (\ref{phi_X_eq}, \ref{phi_X_overline_eq}) require the wall crossing formula in \cite{Yuan_I_FamilyFloer}, which is highly non-trivial to prove: we must perform careful studies of both the non-archimedean analysis and the homological algebra of $A_\infty$ structures.
Hence, although the proof of Theorem \ref{Main_theorem_superpotential_compute_thm} looks very brief, the foundation work \cite{Yuan_I_FamilyFloer} is really complicated.
In short, we simply apply an involved new mechanism (\S \ref{S_nonarchimedean_review}) to a concrete classic example (\S \ref{S_Gross_fibration}), producing a surprisingly concise proof.
\end{rmk}

\begin{rmk}
	\label{shrink_B_0_to_path}
	Notice that we do not have to use a full fibration for the proof, as one flexibility of our story is that the total space $X_0$ of the torus fibration $\pi$ can be actually chosen small in $X$ or $\overline X$. Even, one can simply start with a single Lagrangian isotopy (imagine the base $B_0$ shrinks to a path).
	We hope this perspective may be useful for the further studies.
\end{rmk}

\section{Examples}
\label{S_example}

We select several examples of Theorem \ref{Main_theorem_superpotential_compute_thm} as follows. There will be definitely more other examples.
Furthermore, in principle, if we replace $X=\mathbb C^n$ by another suitable toric Calabi-Yau manifold and consider a similar compactification, the method should also obtain some other results.

\subsubsection{\textbf{Projective spaces}} Take $m=1$ and $v:=v_1=e_1+\cdots+ e_n$. Then, $\overline X=\mathbb {CP}^n$. 
Note that $\beta':=\beta'_1$ satisfies $\partial\beta'=v=e_1+\cdots+e_n$. So, $\mathcal H:=\mathcal H_1=\beta'+\beta_1+\cdots+\beta_n$.
Besides, by (\ref{beta'_a_express_eq}), we have $p_1=n$ and $\beta'=\mathcal H -n\hat\beta- (\gamma_1+\cdots+\gamma_{n-1})$.
Hence, due to Theorem \ref{Main_theorem_superpotential_compute_thm}, the superpotential for a Chekanov-type torus $L$ in the ambient space $\overline X=\mathbb {CP}^n$ is given by
\begin{equation}
\label{CPn_W_-_eq}
\overline W_-^\vee= T^{E(\hat\beta)} Y^{\partial\hat\beta} + T^{E(\beta')} Y^{-n\partial \hat\beta- (\partial\gamma_1+\cdots+\partial\gamma_{n-1})} \big(1+ T^{E(\gamma_1)}Y^{\partial\gamma_1}+\cdots + T^{E(\gamma_{n-1})} Y^{\partial\gamma_{n-1}}
\big)^n
\end{equation}
On the other hand, by its geometric definition, we also know that
\[
\textstyle
\overline W_-^\vee = \sum_{\beta\in\pi_2(X,L)} T^{E(\beta)} Y^{\partial\beta} \mathsf n_\beta
\]
where the summation runs over all the Maslov index two classes $\beta\in\pi_2(\overline X,L)$.
Thus, we can compute $\mathsf n_\beta$ by simply comparing the coefficients of the above two equations. Then, it is direct to obtain:
\begin{cor}
	\label{nonzero_openGW_CPn_cor}
	For a Chekanov-type torus in $\mathbb {CP}^n$, the open GW invariant $\mathsf n_\beta\neq 0$ if and only if $\beta=\hat\beta$ or
	\begin{equation}
		\label{CP_n_topo_class_eq}
	\beta=\mathcal H-n\hat\beta +k_1\gamma_1+\cdots + k_{n-1}\gamma_{n-1}
	\qquad  \forall \
	k_1,\dots, k_{n-1}\ge -1, \
	k_1+\cdots+k_{n-1}\le 1
	\end{equation}
	Moreover, these open GW invariants are explicitly given by $\mathsf n_{\hat\beta}=1$ and
	\[
	\mathsf n_{\mathcal H-n\hat\beta + k_1 \gamma_1 +\cdots+ k_{n-1}\gamma_{n-1}} = \frac{n!}{(k_1+1)!\cdots (k_{n-1}+1)! (1-k_1-\cdots-k_{n-1})!} 
	\]
\end{cor}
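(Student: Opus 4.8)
The plan is to obtain the corollary as a purely combinatorial consequence of Theorem \ref{Main_theorem_superpotential_compute_thm}. Specializing the master formula to $\overline X=\mathbb{CP}^n$ gives precisely the expression (\ref{CPn_W_-_eq}); on the other hand, by the geometric definition of the superpotential, $\overline W_-^\vee=\sum_\beta T^{E(\beta)}Y^{\partial\beta}\mathsf n_\beta$ where $\beta$ ranges over the Maslov-index-two classes in $\pi_2(\mathbb{CP}^n,L)$. So the whole task reduces to expanding the $n$-th power in (\ref{CPn_W_-_eq}) by the multinomial theorem and matching monomials.

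First I would isolate the linear-algebra input that makes the matching unambiguous. By the discussion around (\ref{H_a_eq}), $\mathcal H,\beta_1,\dots,\beta_n$ is a basis of $\pi_2(\mathbb{CP}^n,L)$, and $(\beta_1,\dots,\beta_n)=(\gamma_1+\hat\beta,\dots,\gamma_{n-1}+\hat\beta,\hat\beta)$ is a unimodular change of coordinates, so $\mathcal H,\hat\beta,\gamma_1,\dots,\gamma_{n-1}$ is a basis as well. Hence the classes $\mathcal H-n\hat\beta+\sum_k k_k\gamma_k$ are pairwise distinct for distinct tuples $(k_1,\dots,k_{n-1})$, and none of them equals $\hat\beta$. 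Moreover $\pi_2(\mathbb{CP}^n)=\mathbb Z\,\mathcal H$ with $\mu(\mathcal H)=2(n+1)$, so any two Maslov-two classes with the same boundary differ by $k\mathcal H$ with $2=2+2(n+1)k$, i.e.\ coincide; a Maslov-two class is thus determined by its $Y$-exponent, and reading off coefficients in (\ref{CPn_W_-_eq}) term by term genuinely computes each $\mathsf n_\beta$.

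Then I would carry out the expansion. With $x_k:=T^{E(\gamma_k)}Y^{\partial\gamma_k}$, the multinomial theorem gives
\[
\Big(1+x_1+\cdots+x_{n-1}\Big)^{n}=\sum_{j_0+j_1+\cdots+j_{n-1}=n,\ j_i\ge 0}\frac{n!}{j_0!\,j_1!\cdots j_{n-1}!}\,x_1^{j_1}\cdots x_{n-1}^{j_{n-1}}.
\]
Multiplying by $T^{E(\beta')}Y^{-n\partial\hat\beta-(\partial\gamma_1+\cdots+\partial\gamma_{n-1})}$, using that $E,\partial$ are homomorphisms on $\pi_2$ and that $\beta'=\mathcal H-n\hat\beta-\sum_k\gamma_k$ by (\ref{beta'_a_express_eq}), the $(j_0,\dots,j_{n-1})$-summand equals $T^{E(\beta)}Y^{\partial\beta}$ with $\beta=\beta'+\sum_k j_k\gamma_k=\mathcal H-n\hat\beta+\sum_k(j_k-1)\gamma_k$. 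Substituting $k_k:=j_k-1$ identifies the index set with the tuples of (\ref{CP_n_topo_class_eq}), since $k_k\ge-1$ and $\sum_k k_k=(n-j_0)-(n-1)=1-j_0\le1$, and turns the multinomial coefficient into $\frac{n!}{(1-\sum_k k_k)!\,\prod_k(k_k+1)!}$. Adding the leading term $T^{E(\hat\beta)}Y^{\partial\hat\beta}$, which gives $\mathsf n_{\hat\beta}=1$, produces exactly the asserted nonvanishing set and the closed formula.

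I do not expect a genuine obstacle: the argument is just the bookkeeping of a multinomial expansion. The one point that needs care is the one handled above, namely that the monomials occurring in (\ref{CPn_W_-_eq}) are indexed by \emph{distinct} topological classes, so that no cancellation occurs when coefficients are compared; beyond that, it is worth running the degenerate cases as a sanity check (e.g.\ $j_0=n$, all $k_k=-1$, recovering $\beta=\beta'$ with $\mathsf n_{\beta'}=1$, which is consistent with the displayed formula).
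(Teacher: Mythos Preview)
Your proposal is correct and follows essentially the same approach as the paper, which simply says to compare the coefficients of (\ref{CPn_W_-_eq}) with the geometric expansion $\overline W_-^\vee=\sum_\beta T^{E(\beta)}Y^{\partial\beta}\mathsf n_\beta$ and declares the result ``direct to obtain.'' You supply the multinomial expansion and the careful check that distinct terms correspond to distinct Maslov-two classes, which the paper leaves implicit.
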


\begin{rmk}
	If $n=2$, then $\overline X=\mathbb CP^2$. We have $k:=k_1\in \{-1,0,1\}$ and $\gamma:=\gamma_1$. By Corollary \ref{nonzero_openGW_CPn_cor}, all the nontrivial open GW invariants are $\mathsf n_{\hat\beta}=1$, $\mathsf n_{\mathcal H-2\hat\beta-\gamma}=1$, $\mathsf n_{\mathcal H-2\hat\beta+\gamma}=1$, and $\mathsf n_{\mathcal H-2\hat\beta}=2$.
	The outcomes exactly agree with the Chekanov-Schlenk's result; see \cite[5.6 \& 5.7]{AuTDual}.
\end{rmk}

\begin{rmk} \label{PT_rmk}
	As mentioned above, Pascaleff-Tonkonog \cite[Theorem 1.4]{PT_mutation} have also obtained the superpotential (\ref{CPn_W_-_eq}) using a different approach over the complex field $\mathbb C$. Specifically, for each $1\le k\le n$, they show that $\mathbb{CP}^n$ contains a monotone Lagrangian torus whose superpotential is
	$W_{PT,k}= \sum_{i=k}^n x_i^{-1} +x_k^k \cdot \big(\sum_{i=1}^k x_i^{-1} \big)^k\prod_{i=1}^n x_i$.
	Notably, when $k=n$, we can rearrange it as follows:
	\[
	W_{PT,n}= x_n^{-1} + x_n^n\cdot (x_nx_1^{-1})^{-1}\cdots (x_n x_{n-1}^{-1})^{-1} \left(1+ x_nx_1^{-1}+\cdots+ x_n x_{n-1}^{-1}\right)^n
	\]
	Then, surprisingly, the change of variables
	$x_n^{-1} \xleftrightarrow{} Y^{\partial\hat\beta}$ and
$x_nx_k^{-1} \xleftrightarrow{} Y^{\partial\gamma_k}$ ($1\le k\le n-1$)
	will almost match their $W_{PT,n}$ with ours in (\ref{CPn_W_-_eq}).
	As we see, in the case of $\mathbb{CP}^n$, Pascaleff-Tonkonog's work is definitely more general, including more classes of Lagrangian tori apart from the Clifford/Chekanov ones.
	Nevertheless, a mild advantage of our result is that we can also explicitly find the corresponding topological classes in $\pi_2(\mathbb {CP}^n, L)$ for the nontrivial open GW invariants as listed in (\ref{CP_n_topo_class_eq}).

In view of Remark \ref{shrink_B_0_to_path}, it is also expected that our method might obtain their other superpotentials $W_{PT,k}$ with $k\neq n$.
On the other hand, the coincidence of the two different approaches strongly implies an underlying interconnection. Pascaleff-Tonkonog \cite{PT_mutation} mentioned that a direct study of Maslov-zero holomorphic disks is avoided and bypassed by using the Seidel's ideas in \cite{Seidel_dynamics}.
	We believe that there is an intrinsic reason for this coincidence, but we do not have a good answer at this moment.
\end{rmk}

\subsubsection{\textbf{Product of projective spaces}}
Fix $1\le r\le n$. Take $m=2$, $v_1=e_1+\cdots+e_r$, and $v_2=e_{r+1}+\cdots +e_n$. Then $\overline X=\mathbb {CP}^r\times \mathbb {CP}^{n-r}$.
Notice that $\beta'_1,\beta'_2$ satisfy $\partial\beta'_1=v_1$ and $\partial\beta'_2=v_2$. Thus, $\mathcal H_1=\beta'_1+\beta_1+\cdots+\beta_r$ and $\mathcal H_2=\beta'_2+\beta_{r+1}+\cdots+ \beta_n$.
Moreover, due to (\ref{beta'_a_express_eq}), we have $p_1=r$, $p_2=n-r$, and $\beta'_1=\mathcal H_1-r\hat\beta -\gamma(1)$, $\beta'_2=\mathcal H_2-(n-r)\hat\beta -\gamma(2)$ where $\gamma(1)=\gamma_1+\cdots+\gamma_r$ and $\gamma(2)=\gamma_{r+1}+\cdots+\gamma_n$. Therefore, by Theorem \ref{Main_theorem_superpotential_compute_thm}, the superpotential for a Chekanov-type torus $L$ in the ambient space $\overline X=\mathbb {CP}^r\times \mathbb {CP}^{n-r}$ is given by
\[
\overline W_-^\vee = T^{E(\hat\beta)} Y^{\partial\hat\beta} + T^{E(\beta'_1)} Y^{-r\partial\hat\beta - \partial\gamma(1)} \Big(1+\sum_{j=1}^r T^{E(\gamma_j)}Y^{\partial\gamma_j} \Big)^r
+
T^{E(\beta'_2)} Y^{-(n-r)\partial\hat \beta- \partial \gamma(2)}
\Big(1+\sum_{j=r+1}^n T^{E(\gamma_j)} Y^{\partial \gamma_j}\Big)^{n-r}
\]
Similarly, by comparison again, a straightforward calculation yields that

\begin{cor}
	\label{nonzero_openGW_CPrCPr_cor}
	For a Chekanov-type torus in $\mathbb {CP}^r \times \mathbb {CP}^{n-r}$, the open GW invariant $\mathsf n_\beta\neq 0$ if and only if $\beta=\hat\beta$ or
	\[
	\beta=\mathcal H_1-r\hat\beta +k_1\gamma_1+\cdots+ k_{n-1}\gamma_{n-1}
	\quad \forall \
	k_1,\dots, k_r \ge -1, \ \
	k_{r+1},\dots, k_{n-1}\ge 0, \ \
	k_1+\cdots +k_{n-1}\le 0
	\]
	or
	\[
	\beta=\mathcal H_2-(n-r)\hat\beta +\ell_1\gamma_1+\cdots+ \ell_{n-1} \gamma_{n-1} \quad \forall \
	\ell_1,\dots, \ell_r \ge 0, \ \
	\ell_{r+1},\dots, \ell_{n-1} \ge -1, \ \
	\ell_1 + \cdots +\ell_{n-1}\le 1
	\]
	Furthermore, these nontrivial open GW invariants are concretely given by $\mathsf n_{\hat\beta}=1$ and
	\[
	\mathsf n_{\mathcal H_1 - r\hat\beta
		+k_1\gamma_1+\cdots + k_{n-1}\gamma_{n-1}}
	=
	\frac{r!}{(k_1+1)!\cdots (k_r+1)! k_{r+1}!\dots k_{n-1}! (-k_1-\cdots-k_{n-1})!}
	\]
	and
	\[
	\mathsf n_{\mathcal H_2 -(n-r)\hat\beta  +\ell_1 \gamma_1+\cdots + \ell_{n-1} \gamma_{n-1}}
	=
	\frac{(n-r)!}{\ell_1!\cdots \ell_r! (\ell_{r+1}+1)!\cdots (\ell_{n-1}+1)! (1-\ell_1-\cdots -\ell_{n-1})!}
	\]
\end{cor}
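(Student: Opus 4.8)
The plan is to exploit the two wall-crossing identities $\phi(W_+^\vee)=W_-^\vee$ and $\phi(\overline W_+^\vee)=\overline W_-^\vee$ coming from \cite{Yuan_I_FamilyFloer}, using the fact that the \emph{same} gluing map $\phi$ appears in both (since the Maslov-zero disks are insensitive to the Fano compactification). The first identity, together with the $T^{n-1}$-symmetry of the Gross's fibration, pins down $\phi$; the second then computes $\overline W_-^\vee$ by a direct substitution. Concretely, write $\phi$ in the intrinsic form $Y^\alpha\mapsto Y^\alpha\exp\langle\alpha,\pmb F\rangle$ after absorbing the Novikov prefactor via the Fukaya trick.

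First I would establish the vanishing $\langle\partial\gamma_k,\pmb F\rangle=0$ for $k=1,\dots,n-1$. The geometric input is that the boundary of any Maslov-zero holomorphic disk bounded by a Gross's fiber $L\cong T^n$ lies in the sub-torus $T'=L\cap\bigcup_i D_i\cong T^{n-1}$ (this is the content of the corollaries on Maslov-zero disk boundaries reviewed in the appendix on the Gross's fibration), so each monomial of $\pmb F$, being a pushforward under the evaluation map, is Poincaré dual to $T'$ in $T^n$; since $\pi_1(T')$ is generated precisely by $\partial\gamma_1,\dots,\partial\gamma_{n-1}$, the pairing with $\partial\gamma_k$ vanishes. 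This step follows the argument in the second paragraph of the proof of \cite[Theorem 8.4]{AAK_blowup_toric}, so I would only sketch it.

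Next, applying $\phi$ to (\ref{W_+_eq}) and using (\ref{F_gamma_eq}) gives
\[
\phi(W_+^\vee)=T^{E(\hat\beta)}Y^{\partial\hat\beta}\exp\langle\partial\hat\beta,\pmb F\rangle\big(1+\textstyle\sum_k T^{E(\gamma_k)}Y^{\partial\gamma_k}\big),
\]
and comparing with $W_-^\vee=T^{E(\hat\beta)}Y^{\partial\hat\beta}$ from (\ref{phi_X_eq}) yields the explicit formula (\ref{C2_identity_eq}) for $\exp\langle-\partial\hat\beta,\pmb F\rangle$. This determines $\phi$ on every class through (\ref{gluing_intrinsic_eq}): for any $\alpha$, $\phi(Y^\alpha)=Y^\alpha(\exp\langle-\partial\hat\beta,\pmb F\rangle)^{-\langle\alpha,\text{(coefficient of }\partial\hat\beta)\rangle}$ once we decompose $\langle\alpha,\pmb F\rangle$ along the $\partial\gamma_k$ and $\partial\hat\beta$ directions — and only the $\partial\hat\beta$ part survives. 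Finally I would feed $\overline W_+^\vee=W_+^\vee+\sum_a T^{E(\beta_a')}Y^{\partial\beta_a'}$ from (\ref{W_+_overline_eq}) into $\phi$: the $W_+^\vee$ part reproduces $T^{E(\hat\beta)}Y^{\partial\hat\beta}$, and for each $a$, using $\beta_a'=\mathcal H_a-p_a\hat\beta-\gamma(a)$ from (\ref{beta'_a_express_eq}) together with $\partial\mathcal H_a=0$, one gets $\phi(Y^{\partial\beta_a'})=Y^{-p_a\partial\hat\beta-\partial\gamma(a)}\big(1+\sum_k T^{E(\gamma_k)}Y^{\partial\gamma_k}\big)^{p_a}$, which is exactly the stated formula. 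The open GW invariants then drop out by matching coefficients against $\overline W_-^\vee=\sum_\beta T^{E(\beta)}Y^{\partial\beta}\mathsf n_\beta$.

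The main obstacle is conceptual rather than computational: it is the justification that the gluing map $\phi$ has the rigid form (\ref{gluing_intrinsic_eq}) with $\pmb F$ depending \emph{only} on Maslov-zero disk counts, and that the two relations (\ref{phi_X_eq}), (\ref{phi_X_overline_eq}) hold with the identical $\phi$. This is precisely the wall-crossing formula of \cite{Yuan_I_FamilyFloer}, whose proof requires the non-archimedean analysis and the $A_\infty$ homological algebra developed there; I would simply invoke it. The only other point needing care is the bookkeeping of monodromy and base-point dependence of the Novikov exponents $E(\beta)$, handled by the Fukaya trick and by working with the fixed identifications $\mathscr P$ along the path $\sigma$ across $H_n$ as set up in the Notations.
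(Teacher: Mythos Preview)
Your proposal is correct and follows essentially the same route as the paper. The paper's own proof of this corollary is terse: it cites Theorem~\ref{Main_theorem_superpotential_compute_thm} (whose proof is exactly the wall-crossing argument you outline --- the vanishing $\langle\partial\gamma_k,\pmb F\rangle=0$, the identity (\ref{C2_identity_eq}), and the substitution into (\ref{phi_X_overline_eq})) to write down $\overline W_-^\vee$ explicitly for $\mathbb{CP}^r\times\mathbb{CP}^{n-r}$, and then says ``by comparison again, a straightforward calculation yields'' the corollary via multinomial expansion. You have effectively reproduced the proof of Theorem~\ref{Main_theorem_superpotential_compute_thm} rather than invoking it, and left the final multinomial coefficient extraction implicit (``drop out by matching coefficients''), but the substance is identical.
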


\begin{rmk}\label{retrieve_CP1CP1_rmk}
	Suppose $n=2$ and $r=1$. Then, we only have $k:=k_1$ and $\ell:=\ell_1$ which are subject to the conditions $-1\le k\le 0$ and $0\le \ell \le 1$. 
	Set $\gamma=\gamma_1$.
	The classes of Maslov index two holomorphic disks are $\hat\beta$, $\mathcal H_1-\hat\beta-\gamma$, $\mathcal H_1-\hat\beta$, $\mathcal H_2-\hat\beta$, and $\mathcal H_2-\hat\beta +\gamma$. By Corollary \ref{nonzero_openGW_CPrCPr_cor}, their open Gromov-Witten invariants are all equal to $1$. Hence, we exactly retrieve \cite[Proposition 5.12]{AuTDual}.
\end{rmk}

\subsubsection{\textbf{Hirzebruch surface}}
Assume $n=2$.
Take $m=2$, $v_1=e_1+e_2$, and $v_2=e_2$.
Then $\overline X=\mathbb P \big(\mathcal O_{\mathbb P^1}\oplus \mathcal O_{\mathbb P^1}(1) \big)=:\mathbb F_1$ is a Hirzebruch surface. It is also the del Pezzo surface given by the blow-up of $\mathbb {CP}^2$ at one of the torus fixed points.

As before, we have $\beta'_1$ and $\beta'_2$ such that $\partial\beta'_1=v_1=e_1+e_2$ and $\partial\beta'_2=v_2=e_2$.
Then,
$\mathcal H_1=\beta'_1+\beta_1+\beta_2$ and $\mathcal H_2=\beta'_2+\beta_2$. 
So, by (\ref{W_+_overline_eq}), the Clifford superpotential is given by
\[
\overline W_+^\vee= T^{E(\beta_1)}Y^{\partial\beta_1} + T^{E(\beta_2)} Y^{\partial\beta_2} + T^{E(\mathcal H_1-\beta_1-\beta_2)} Y^{-\partial\beta_1-\partial\beta_2} + T^{E(\mathcal H_2-\beta_2)} Y^{-\partial\beta_2}
\]
On the other hand, we set $\gamma=\gamma_1$.
Due to (\ref{wall_cross_beta_Cn_eq}) (\ref{beta'_a_express_eq}), we further have $\beta'_1=\mathcal H_1 - 2\hat\beta -\gamma$ and $\beta'_2=\mathcal H_2-\hat\beta$.
Then, it follows from Theorem \ref{Main_theorem_superpotential_compute_thm} that the Chekanov superpotential is
\begin{align*}
\overline W_-^\vee
&
=
T^{E(\hat\beta)}Y^{\partial\hat\beta}
+
T^{E(\beta'_1)}Y^{-2\partial\hat\beta-\partial\gamma}
(1+ T^{E(\gamma)} Y^{\partial\gamma} )^2
+
T^{E(\beta'_2)} Y^{-\partial\hat\beta}	\\
&
=
T^{E(\beta)} Y^{\partial\hat\beta} 
+
T^{E(\mathcal H_1-2\partial\hat\beta-\partial\gamma)} Y^{-2\partial\hat\beta -\partial\gamma}
+
2 T^{E(\mathcal H_1-2\partial\hat\beta)} Y^{-2\partial\hat\beta}
+
T^{E(\mathcal H_1-2\partial\hat\beta+\partial\gamma)} Y^{-2\partial\hat\beta +\partial\gamma}
+
T^{E(\mathcal H_2-\partial\hat\beta)} Y^{-\partial\hat\beta}
\end{align*}
Extracting the coefficients, we analogously obtain the following result (which seems to be new):

\begin{cor}
	For a Chekanov-type torus in $\mathbb F_1$, the open GW invariant $\mathsf n_\beta\neq 0$ if and only if $\beta=\hat\beta, \mathcal H_1-2\hat\beta-\gamma, \mathcal H_1-2\hat\beta, \mathcal H_1-2\hat\beta+\gamma,$ or $\mathcal H_2-\hat\beta$.
	The corresponding values of open GW invariants are $1,1,2,1,1$.
\end{cor}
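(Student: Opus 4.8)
The plan is to obtain this corollary as a routine specialization of Theorem~\ref{Main_theorem_superpotential_compute_thm}. First I would note that $\mathbb F_1$ --- the blow-up of $\mathbb{CP}^2$ at a torus fixed point --- is a smooth toric del Pezzo surface, hence Fano, so Theorem~\ref{Main_theorem_superpotential_compute_thm} applies to the compactification $\overline X=\mathbb F_1$ determined by $n=2$, $m=2$, $v_1=e_1+e_2$, $v_2=e_2$. Writing $\gamma:=\gamma_1$ (the only $\gamma_k$, since $n-1=1$), the relations (\ref{H_a_eq}) and (\ref{beta'_a_express_eq}) give $\mathcal H_1=\beta'_1+\beta_1+\beta_2$, $\mathcal H_2=\beta'_2+\beta_2$, and $\beta'_1=\mathcal H_1-2\hat\beta-\gamma$, $\beta'_2=\mathcal H_2-\hat\beta$. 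Substituting these into the formula of Theorem~\ref{Main_theorem_superpotential_compute_thm}, expanding the square $(1+T^{E(\gamma)}Y^{\partial\gamma})^2=1+2\,T^{E(\gamma)}Y^{\partial\gamma}+T^{2E(\gamma)}Y^{2\partial\gamma}$, and collecting the Novikov exponents using additivity of the symplectic area $E$ and of the boundary map $\partial$ on $\pi_2(\mathbb F_1,L)$ --- for instance $E(\beta'_1)+E(\gamma)=E(\beta'_1+\gamma)=E(\mathcal H_1-2\hat\beta)$ and $\partial\beta'_1+\partial\gamma=\partial(\mathcal H_1-2\hat\beta)$ --- one arrives at the five-term expression for $\overline W_-^\vee$ displayed above, namely
\[
T^{E(\hat\beta)}Y^{\partial\hat\beta}+T^{E(\mathcal H_1-2\hat\beta-\gamma)}Y^{\partial(\mathcal H_1-2\hat\beta-\gamma)}+2\,T^{E(\mathcal H_1-2\hat\beta)}Y^{\partial(\mathcal H_1-2\hat\beta)}+T^{E(\mathcal H_1-2\hat\beta+\gamma)}Y^{\partial(\mathcal H_1-2\hat\beta+\gamma)}+T^{E(\mathcal H_2-\hat\beta)}Y^{\partial(\mathcal H_2-\hat\beta)}.
\]
Since $\gamma$ has Maslov index zero while $\hat\beta,\beta'_1,\beta'_2$ have Maslov index two, all five classes $\hat\beta$, $\mathcal H_1-2\hat\beta-\gamma$, $\mathcal H_1-2\hat\beta$, $\mathcal H_1-2\hat\beta+\gamma$, $\mathcal H_2-\hat\beta$ have Maslov index two.

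Next I would compare this expression with the geometric expansion $\overline W_-^\vee=\sum_{\mu(\beta)=2}T^{E(\beta)}Y^{\partial\beta}\,\mathsf n_\beta$ (cf. (\ref{superpotential_intro_local_eq})) of the Chekanov superpotential, and extract the invariants monomial by monomial. For this extraction to be unambiguous I must verify that the five monomials above neither collide nor cancel, and it suffices that their $Y$-exponents are pairwise distinct in $\pi_1(L)$: under the identification $\pi_1(L)\cong N$ fixed by the path $\sigma$ one has $\partial\hat\beta=-e_2$ and $\partial\gamma=-e_1+e_2$ (cf. (\ref{wall_cross_beta_Cn_eq})), which form a $\mathbb Z$-basis of $N\cong\mathbb Z^2$, so the five exponents $\partial\hat\beta$, $-\partial\hat\beta$, $-2\partial\hat\beta-\partial\gamma$, $-2\partial\hat\beta$, $-2\partial\hat\beta+\partial\gamma$ are distinct. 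Hence each of these $Y$-powers carries a single Novikov monomial in $\overline W_-^\vee$, and matching with Theorem~\ref{Main_theorem_superpotential_compute_thm} --- whose conclusion is exactly that the superpotential records, for every Maslov-two class, whether a disk exists and the virtual count --- pins down the nonzero invariants as $\mathsf n_{\hat\beta}=\mathsf n_{\mathcal H_1-2\hat\beta-\gamma}=\mathsf n_{\mathcal H_1-2\hat\beta+\gamma}=\mathsf n_{\mathcal H_2-\hat\beta}=1$ and $\mathsf n_{\mathcal H_1-2\hat\beta}=2$, with $\mathsf n_\beta=0$ for every other class. Reading these off is the corollary.

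The honest assessment is that there is no real obstacle here: all of the difficulty already lies in Theorem~\ref{Main_theorem_superpotential_compute_thm} and its input from \cite{Yuan_I_FamilyFloer}, while the present statement is mechanical --- substitute the fan data of $\mathbb F_1$, expand a binomial, separate monomials. The only genuine (and elementary) point is the distinctness of the five boundary classes, handled above via the basis $\{\partial\hat\beta,\partial\gamma\}$ of $N$. It is worth noting that the identical scheme produces Corollaries~\ref{nonzero_openGW_CPn_cor} and~\ref{nonzero_openGW_CPrCPr_cor} as well; only the combinatorics of the multinomial expansion changes.
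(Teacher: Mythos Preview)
Your proposal is correct and follows exactly the paper's approach: the paper derives the Corollary by substituting the $\mathbb F_1$ fan data into Theorem~\ref{Main_theorem_superpotential_compute_thm}, expanding, and reading off the monomial coefficients (the displayed computation of $\overline W_-^\vee$ immediately preceding the Corollary, followed by ``Extracting the coefficients, we analogously obtain\ldots'', is the paper's entire argument). Your explicit verification that the five boundary classes are pairwise distinct in $\pi_1(L)\cong N$ via the basis $\{\partial\hat\beta,\partial\gamma\}$ is a useful clarification that the paper leaves implicit.
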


\paragraph{\textbf{Acknowledgements}.}
I am indebted to my advisor Kenji Fukaya for many enlightening discussions and conversations.
I would like to thank Yuhan Sun for his knowledge of toric geometry and enlightening discussions.
I would also like to thank Mohammed Abouzaid, Jiahao Hu, Dogancan Karabas, Wenyuan Li, Santai Qu, Renato Vianna, Junxiao Wang, Yi Wang, and Eric Zaslow for helpful conversations.
I am grateful to Siu Cheong Lau and Yu-Shen Lin for the invitation to Boston University Geometry and Physics Seminar in Fall 2020.
I am also grateful to Sara Tukachinsky for the invitation to Symplectic Zoominar in Spring 2021.

\appendix

\section{Non-archimedean SYZ mirror construction: Review}
\label{S_nonarchimedean_review}

In this section, we review the construction of $\mathbb X^\vee=(X^\vee, \pi^\vee, W^\vee)$. See \cite{Yuan_I_FamilyFloer} for the full details.

\subsection{Preparation}

\subsubsection{Non-archimedean analysis}
We will need some non-archimedean analysis. 
Recall that the \textit{Novikov field} $\Lambda$ consists of all the series $\sum_{i\ge 0} a_i T^{E_i}$ where $T$ is a formal symbol, $a_i\in\mathbb C$, and $\{E_i\}$ is a divergent strictly-increasing sequence of real numbers. It comes with a valuation map $\val: \Lambda\to \mathbb R\cup\{\infty\}$ defined by setting $0$ to $\infty$ and sending $\sum_{i\ge 0} a_i T^{E_i}$ (with all $a_i\neq 0$) to the smallest $E_0$.
The $\val$ is equivalent to a non-archimedean norm $|a|=\exp(-\val(a))$, giving an adic-topology on $\Lambda$.
Next, we consider the \textit{non-archimedean torus fibration}:
\[
\trop: (\Lambda^*)^n \to \mathbb R^n, \quad (z_i)\to (\val(z_i))
\]
which is an analog of the complex logarithm map
$Log: (\mathbb C^*)^n\to\mathbb R^n, \quad (z_i)\mapsto (\log|z_i|)$.
The preimage $
\trop^{-1}(\Delta)
$ of a rational polyhedron $\Delta\subset \mathbb R^n$ is an \textit{affinoid domain} (an analogue of affine scheme in the theory of rigid analytic spaces), and we call it a \textit{polytopal domain}.
We remark that the points in
$\trop^{-1}(\Delta)$ are in bijection with the maximal ideals of the \textit{polyhedral affinoid algebra}:
\begin{equation}
	\label{polyhedral_affinoid_algebra_1_eq}
	\textstyle \Lambda\langle \Delta\rangle:= \Big\{ \sum_{\nu_i\in\mathbb z} a_{\nu_1\cdots \nu_n} Y_1^{\nu_1}\cdots Y_n^{\nu_n} \ \ \Big|\ \ \val(a_{\nu_1\cdots \nu_n}) +  (\nu_1,\dots, \nu_n)\cdot u \to \infty \ \forall \ u\in \Delta 
	\Big\}
\end{equation}
Alternatively, the algebra $\Lambda\langle\Delta\rangle$ consists of all Laurent formal power series $f\in \Lambda[[Y_1^\pm,\dots,Y_n^\pm]]$ so that for every point $(y_1,\dots,y_n)\in \trop^{-1}(\Delta)$, the $f(y_1,\dots,y_n)$ forms a convergent series with respect to the aforementioned adic-topology in $\Lambda$.

The mirror fibration $(X^\vee,\pi^\vee)$ over a sufficiently small domain $\Delta\subset B_0$ is isomorphic to $\trop^{-1}(\Delta)$.
Indeed, the $B_0$ is an integral affine manifold, we can similarly define a rational polyhedron $\Delta$ in $B_0$:
given a rational polyhedron $\Delta$ in $B_0$, we can find an integral affine chart $\varphi_q$ centered at some point $q\in B_0$;
then, $\varphi_q(\Delta)$ is a rational polyhedron in $\mathbb R^n$, and we have the polyhedral affinoid algebra $\Lambda\langle \varphi_q(\Delta)\rangle$ and the corresponding polytopal domain as before.
Intrinsically, we define
\begin{equation}
	\label{polyhedral_affinoid_algebra_2_eq}
	\textstyle
	\Lambda\langle \Delta, q \rangle:= \Big\{ \sum_{\alpha\in \pi_1(L_q)} a_{\alpha} Y^\alpha \ \ \Big|\ \ \val(a_{\alpha}) +  \langle  \alpha, q'-q\rangle \to \infty \ \forall \ q'\in \Delta 
	\Big\}
\end{equation}
in the formal power series ring $\Lambda[[\pi_1(L_q)]]$.
Here the $Y$ is a formal symbol, and we can view $q'-q$ as a vector in $T_q B_0\cong H^1(L_q)\cong \mathbb R^n$.
Abusing the terminologies, we also call $\Lambda\langle \Delta,q\rangle$ a \textit{polyhedral affinoid algebra}; its spectrum of maximal ideals, denoted by
$\Sp \Lambda \langle \Delta, q\rangle$, is also called a \textit{polytopal domain}.
We can similarly define $\Sp \Lambda\langle \Delta,q\rangle \to \Delta$ which is naturally isomorphic to the $\trop$ over $\varphi_q(\Delta)$ via $\varphi_q$.
For clarity, we will not distinguish between $\trop$ and $\trop_q$, $\Delta$ and $\varphi_q(\Delta)$.

\subsubsection{Homological algebra}
We start with a brief review of the homological algebra used in \cite{Yuan_I_FamilyFloer}.
A \textit{label group} is an abelian  group $(\G,E,\mu)$ with two group homomorphisms $E:\G\to\mathbb R$ and $\mu:\G\to 2\mathbb Z$.
In practice, we take a symplectic manifold $X$ and a compact oriented relatively-spin Lagrangian submanifold $L\subset X$; then, we set
$\G=\G(X,L):=\im (\pi_2(X,L)\to H_2(X,L))$
with $E$ and $\mu$ being the energy/area and the Maslov index respectively. We will not distinguish $\G(X,L)$ and $\pi_2(X,L)$.

By a \textit{$\G$-gapped $A_\infty$ algebra} on a vector space $C$, we mean an operator system $\m=(\m_{k,\beta})$, where the $\m_{k,\beta}:C^{\otimes k}\to C$ is a $k$-multilinear operator labeled by $\beta\in \G$, satisfying $\deg \m_{k,\beta}=2-k-\mu(\beta)$, the gappedness condition (which encodes Gromov's compactification), and the obvious $A_\infty$ associativity relation.
For simplicity, we will often omit saying `$\G$-gapped'.
In general, $\m_1:=\sum T^{E(\beta)}\m_{1,\beta}$ does not satisfy $\m_1\circ\m_1=0$, but in the energy-zero level, we do have $\m_{1,0}\circ\m_{1,0}=0$ by definition.
Thus, an $A_\infty$ algebra $(C,\m)$ induces the underlying cochain complex $(C,\m_{1,0})$.
We call $(C,\m)$ \textit{minimal} if $\m_{1,0}=0$.
A \textit{$\G$-gapped $A_\infty$ homomorphism} $\f=(\f_{k,\beta}): (C,\m)\to (C',\m')$ can be defined in the similar way, and it also induces an underlying cochain map $\f_{1,0}: (C,\m_{1,0})\to (C',\m'_{1,0})$.
By an \textit{$A_\infty$ homotopy equivalence} we mean an $A_\infty$ homomorphism so that $\f_{1,0}$ is a quasi-isomorphism of the underlying cochain complexes.
Due to Whitehead theorem, any $A_\infty$ homotopy equivalence admits a homotopy inverse $\g$ in the sense that $\g\circ \f$ and $\f\circ\g$ are homotopic to the identity $A_\infty$ homomorphism $\id$.
Technically, we need to include the extra unitality and divisor axiom conditions for the homotopy.

\subsubsection{$A_\infty$ algebras associated to Lagrangian submanifolds}

For an $\omega$-tame almost complex structure $J$, the (compactified) moduli space 
$\mathcal M_{k+1,\beta}(J,L)$
consists of equivalence classes of $(k+1)$-boundary-marked $J$-holomorphic stable maps of genus zero with one boundary component in the class $\beta\in \pi_2(X,L)$.
Note that the virtual dimension is equal to $n+k-2+\mu(\beta)$.
A stable map in this moduli space is often called a \textit{stable disk}.
There exists a stable map topology on the moduli space which is Hausdorff and compact \cite[Theorem 7.1.43]{FOOOBookTwo}.
Its interior part $\mathcal M^\circ_{k+1,\beta}(J,L)$ is the subset of stable disks whose source is a smooth disk (without any sphere or disk bubbles).

In practice, let $\pi:X_0\to B_0$ be a semipositive Lagrangian torus fibration.
Given $q\in B_0$, we take the moduli spaces with the Lagrangian torus fiber $L_q:=\pi^{-1}(q)$ in place of a general $L$ there.
Then, we first obtain an $A_\infty$ algebra
$
(\Omega^*(L_q), \check\m)
$
on the de Rham complex.
Exploiting the homological perturbation produces a minimal $A_\infty$ algebra on the de Rham cohomology $(H^*(L_q), \m)$.
(Technically, we need to use the so-called harmonic contraction rather than an arbitrary contraction; see \cite{Yuan_I_FamilyFloer} if interested). We will only work with $\m$ regardless of $\check \m$.
We define the \textit{Maurer-Cartan formal power series} of $\m$:
\[
\textstyle
P=\sum_\beta T^{E(\beta)} Y^{\partial\beta} \m_{0,\beta} \qquad \in \Lambda[[\pi_1(L_q)]] \hat\otimes H^*(L_q)
\]
Note that one can identify $\Lambda[[\pi_1(L_q)]]$ with $\Lambda[[Y_1^\pm,\dots, Y_n^\pm]]$ by specifying a basis.
Denote by $\one \in H^0(L_q)$ the constant-one function.
By the semipositive assumption (\S \ref{sss_family_Floer_general}), only those $\beta$'s with $\mu(\beta)=0$ or $2$ have contributions to $P$.
Then, we make the decomposition
$
P=W\cdot \one + Q$
where
\[
\textstyle
W:=\sum_{\mu(\beta)=2}  T^{E(\beta)} Y^{\partial\beta} \m_{0,\beta} / \one \qquad \quad Q:=\sum_{\mu(\beta)=0} T^{E(\beta)} Y^{\partial\beta} \m_{0,\beta}
\]
By degree reason, $\m_{0,\beta}\in H^{2-\mu(\beta)}(L)$. Remark that the number $\m_{0,\beta} / \one $ is also known as the (one-pointed) \textit{open Gromov-Witten invariant} (c.f. \S \ref{ss_OGW}).
Given a basis $\{\theta_1,\dots, \theta_n\}$ of $H^1(L_q)$, we obtain a basis $\theta_{pq}:=\theta_p\wedge \theta_q$ ($1\le p<q\le n$) of $H^2(L_q)$, and we write $Q=\sum_{p<q} Q_{pq}\cdot \theta_{pq}$.

\subsection{Construction}

\subsubsection{Mirror local charts}
By Groman-Solomon's reverse isoperimetric inequalities \cite{ReverseI},
one can prove that for a sufficiently small rational polyhedron $\Delta$, the series $W$ and $Q_{pq}$ are all contained in the polyhedral affinoid algebra $\Lambda \langle \Delta, q\rangle$.
To avoid too many digressions, we assume all the $Q_{pq}$ are zero, i.e. the weak Maurer-Cartan equations vanish. This is the case at least for the examples in the current paper, although we do not know so in general.

Now, we first remark that the mirror space $X^\vee$ is set-theoretically $\bigcup_{q\in B_0} H^1(L_q; U_\Lambda)\equiv B_0\times U_\Lambda^n$. In contrast, the total space $(\Lambda^*)^n$ of $\trop$ is set-theoretically $\mathbb R^n\times U_\Lambda^n$.
Then, a \textit{local chart} of $X^\vee$ is defined by
$\Sp\Big( \Lambda\langle \Delta,q\rangle \Big)\cong \trop_q^{-1}(\Delta)$.
The remaining series $W=\sum T^{E(\beta)} Y^{\partial\beta} \m_{0,\beta}/ \one$ will give a local piece of the global potential $W^\vee$, and the dual map $\pi^\vee$ is locally identified with the map $\trop_q$.

\subsubsection{Gluing map}
\label{sss_gluing_map}
Next, we aim to define the \textit{gluing maps} (also call transition maps) among the dual fibers.
Note that the different fibers of $\trop$ are related by the shifting maps $y_i\mapsto T^{c_i}y_i$, while the dual fibers of $\pi^\vee$ are related by the gluing maps in the form of $y_i\mapsto T^{c_i} y_i \exp(F_i(y))$ with the additional twisting term $F_i$ that is determined by the counts of Maslov-zero holomorphic disks.

Let $L=L_q$ and $\tilde L=L_{\tilde q}$ be two adjacent Lagrangian fibers.
Suppose the two associated $A_\infty$ algebras are $(H^*(L), \m)$ and $(H^*(\tilde L), \tilde \m)$.
Roughly, chosen a small diffeomorphism $F\in\diff_0(X)$ with $F(L)=\tilde L$, the first $A_\infty$ algebra is almost identical to another $A_\infty$ algebra $(H^*(\tilde L), \m^F)$ except the Novikov coefficient is varied by the relation $E(\tilde \beta)-E(\beta)=\langle \partial\beta, \tilde q-q\rangle$. Besides, it is subject to the Fukaya's trick equation:
\begin{equation}
\label{Fuk_trick_coh_eq}
\m^F_{k,\tilde \beta} = F^{-1*} \circ \m_{k,\beta} \circ (F^*,\dots, F^*)
\end{equation}
It derived from the fact that whenever $u$ is a $J$-holomorphic disk bounded by $L$, the composition map $F\circ u$ is a $F_*J$-holomorphic disk bounded by $\tilde L$, and vice versa. This observation leads to a natural identification of the moduli spaces $
\mathcal M_{k,\beta}(J,L)\cong \mathcal M_{k,\tilde \beta}(F_*J, \tilde L)$ via $u\leftrightarrow F\circ u$.

By Fukaya's trick, we unify the underlying Lagrangian of the two $A_\infty$ algebras by transferring an isotopy of Lagrangian fibers to a movement of almost complex structures; as studied in the literature \cite{FuCyclic, FOOOBookOne} the latter gives rise to an $A_\infty$ homotopy equivalence
\begin{equation}
	\label{mC_F_eq}
	\mC^F=(\mC^F_{k,\beta}) : \tilde \m \to \m^F
\end{equation}
Modulo the technical consideration of the harmonic contractions and homological algebras, it is obtained by choosing a path $\mathbf J=(J_t)_{t\in\oi}$ of $\omega$-tame almost complex structures from $J$ to $F_*J$ and by considering the following parameterized moduli space
\begin{equation}
\label{moduli_parameterized_eq}
\textstyle
\mathcal M_{k+1,\beta}(\mathbf J, L):= \bigsqcup_t  \mathcal M_{k+1,\beta}(J_t, L)
\end{equation}

Now, we consider the ring isomorphism defined as follows
\begin{equation}
\label{formula_phi_eq}
\textstyle
\phi: \Lambda[[ \pi_1(L)]] \to \Lambda[[\pi_1(\tilde L)]], \qquad
Y^\alpha \mapsto T^{\langle \alpha, \tilde q-q\rangle} \cdot Y^{F_*\alpha}\cdot \exp \Big\langle 
F_*\alpha, \sum_{\gamma\neq 0, \mu(\gamma)=0} \mC_{0,\gamma}^F T^{E(\gamma)} Y^{\partial \gamma}
\Big\rangle
\end{equation}
Note that $\mC_{0,\gamma}^F\in H^1(L)$. Besides, it further restricts to an algebra homomorphism
$
\phi: \Lambda\langle \tilde \Delta, \tilde q \rangle \to \Lambda\langle \Delta, q\rangle
$.
Then we define a {transition map} (or called a gluing map) to be the associated map on the spectrum of maximal ideals
$\psi: V=\Sp (\Lambda \langle  \Delta,   q\rangle) \to \tilde V=\Sp (\Lambda \langle \tilde\Delta, \tilde q\rangle)$.
Abusing the terminologies, we call the original homomorphism $\phi$ a \textit{gluing map} or a \textit{transition map}.

Both $\alpha\in \pi_1(L)$ and $F_*\alpha \in \pi_1(\tilde L)$ can be identified with a lattice point $\alpha=(\alpha_1,\dots,\alpha_n)$ in $\mathbb Z^n$, thus, we have $\Lambda[[\pi_1(L)]]\cong \Lambda[[\pi_1(\tilde L)]]\cong \Lambda[[Y_1^\pm,\dots, Y_n^\pm]]$ by identifying $Y^\alpha$ with $Y_1^{\alpha_1}\cdots Y_n^{\alpha_n}$.
Therefore, up to a shifting $Y_i\mapsto T^{c_i}Y_i$, we may often omit the coefficient $T^{\langle \alpha, \tilde q-q\rangle}$ in the formula of $\phi$ (\ref{formula_phi_eq}) thanks to the Fukaya's trick.
Then,
	\begin{equation}
	\label{gluing_map_Fukaya_trick_eq}
	\phi: Y^\alpha\mapsto Y^{\alpha} \exp \langle \alpha, \pmb F(Y)\rangle
	\end{equation}
	where we set
	$
	\pmb F(Y):= \sum_{\gamma\neq 0,\mu(\gamma)=0}  T^{E(\gamma)} Y^{\partial\gamma} \mC_{0,\gamma}^F$ in $\Lambda[[Y_1^\pm,\dots, Y_n^\pm]]\hat\otimes H^1(L))$.
Notice that each $\gamma$ is the class of a holomorphic stable disk, and the coefficients of $\pmb F$ are all in $\Lambda_+$.

\subsubsection{Cocycle condition}
\label{sss_cocycle}
First, note that the transition map $\psi$ is independent of the various choices, such as $F, \mathbf J,\mg$.
In brief, one can first show that another $A_\infty$ homomorphism $\mC^{F'}$ that is obtained by making different choices is ud-homotopic to the original $\mC^F$. (The prefix `ud' means `unitality with divisor axiom'.)
If we denote by $\phi'$ the ring isomorphism obtained from $\mC^{F'}$, the formula (\ref{formula_phi_eq}) implies
\[
\phi'(Y^\alpha)=\phi(Y^\alpha) \cdot \exp \Big\langle 
\alpha,
\sum
T^{E(\beta)}  \ (\mC_{0,\beta}^{F'}-\mC_{0,\beta}^F) \ Y^{\partial\beta}
\Big\rangle
\]
and so it suffices to study the \textit{error term}
$S:=\sum
T^{E(\beta)}  \ (\mC_{0,\beta}^{F'}-\mC_{0,\beta}^F) \ Y^{\partial\beta}$
in the exponent.
Indeed, one can exactly use the homotopy between $\mC^F$ and $\mC^{F'}$ to measure the error term, and it turns out that the error term $S$ is governed by the weak Maurer-Cartan equations. So, the two homomorphisms and the two transition maps must agree with each other.
By a similar argument, the cocycle conditions among these transition maps can also be proved.
Ultimately, we can construct a mirror triple $\mathbb X^\vee=(X^\vee,W^\vee,\pi^\vee)$.

\subsubsection{More about choice independence}
\label{sss_application_choice_independence}

A key step in the mirror reconstruction of $\mathbb X^\vee=(X^\vee, W^\vee, \pi^\vee)$ is that we prove the transition map is independent of various choices as explained above.
Accordingly, one can take \textit{specific choices} to extract information.

Let $\mathscr L=(L_t)$ denote the Lagrangian isotopy among the fibers over a path $\sigma$ in $B_0$ with $L_0=\tilde L$ and $L_1=L$. Clearly, the homotopy groups $\pi_2(X,L_t)$ for various $t\in\oi$ can be identified with each other. We often ignore the subtle difference among them.
Then, as before, we have
$\mathcal M_{k,\beta}(J,L_t)
	\cong
	\mathcal M_{k,\beta}(J_t,\tilde L)$.
Moreover, in the definition of the gluing map, the choice-independence (\S \ref{sss_cocycle}) allows us to additionally require the family $\mJ=(J_t)$ is particularly given by $J_t=F_{t*}J$ for a smooth family $F_t\in\diff_0(X)$ between $F$ and $\id_X$ with $F_t(L_t)=L_0$. Thus, the corresponding parameterized moduli space (\ref{moduli_parameterized_eq}) can be identified with $\bigsqcup_t \mathcal M_{k,\beta}(J,L_t)$. From the above discussion, it follows that

\begin{prop}
	\label{trivial_transition_map_prop}
	If every $L_t$ does not bound any Maslov index zero $J$-holomorphic disk, then the gluing map takes the form
	$
	Y^\alpha \mapsto T^{\langle \alpha, \tilde q-q\rangle} Y^{F_*\alpha}$.
\end{prop}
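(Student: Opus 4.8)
The plan is to read the assertion off the explicit formula (\ref{formula_phi_eq}) for the gluing map. There $\phi(Y^\alpha)=T^{\langle\alpha,\tilde q-q\rangle}\,Y^{F_*\alpha}\,\exp\langle F_*\alpha,\pmb F(Y)\rangle$ with $\pmb F(Y)=\sum_{\gamma\neq0,\ \mu(\gamma)=0}\mC^F_{0,\gamma}\,T^{E(\gamma)}Y^{\partial\gamma}$, so everything reduces to showing $\mC^F_{0,\gamma}=0$ for every class $\gamma\neq0$ with $\mu(\gamma)=0$; then the exponential factor is $1$ and $\phi$ has the stated form.

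First I would invoke the choice independence of \S\ref{sss_cocycle} and \S\ref{sss_application_choice_independence}: since the transition map $\psi$, hence $\phi$, does not depend on the data $(F,\mJ,\mg)$, we may compute $\mC^F$ with the special path $\mJ=(J_t)$ given by $J_t=F_{t*}J$ for a smooth family $F_t\in\diff_0(X)$ from $\id_X$ to $F$ with $F_t(L_t)=L_0=\tilde L$ along the Lagrangian isotopy $\mathscr L=(L_t)$ over $\sigma$. For this choice the Fukaya-trick bijection $u\mapsto F_t\circ u$ identifies $\mathcal M_{k+1,\gamma}(J_t,\tilde L)$ with $\mathcal M_{k+1,\gamma}(J,L_t)$, so the parameterized moduli space defining $\mC^F$ becomes $\mathcal M_{k+1,\gamma}(\mJ,\tilde L)=\bigsqcup_t\mathcal M_{k+1,\gamma}(J_t,\tilde L)\cong\bigsqcup_t\mathcal M_{k+1,\gamma}(J,L_t)$.

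Then I would run the emptiness step. If no $L_t$ bounds a non-constant Maslov-zero $J$-holomorphic (stable) disk, then for every $\gamma\neq0$ with $\mu(\gamma)=0$ the space $\mathcal M_{k+1,\gamma}(J,L_t)$ is empty for all $t$: by semipositivity of $\pi$ a stable disk in a Maslov-zero class can only consist of Maslov-zero disk components and Chern-number-zero sphere components, so a non-constant one would contain a non-constant Maslov-zero $J$-disk bounded by some $L_t$, contradicting the hypothesis. Hence the parameterized moduli spaces vanish and so do the associated chain-level Maslov-zero operations. Transferring to the minimal model, the harmonic-contraction construction writes each $\mC^F_{0,\gamma}$ with $\mu(\gamma)=0$ as a finite sum over ribbon trees whose vertices are these chain-level operations (with energy-preserving homotopies on the internal edges); since all Maslov indices are $\ge0$, every vertex class $\gamma'$ satisfies $\mu(\gamma')=0$, a vertex with $\gamma'\neq0$ contributes $0$ by the emptiness just shown, and a tree with only $\gamma'=0$ vertices lands in class $0\neq\gamma$. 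Thus $\mC^F_{0,\gamma}=0$, so $\pmb F(Y)=0$ and (\ref{formula_phi_eq}) collapses to $\phi(Y^\alpha)=T^{\langle\alpha,\tilde q-q\rangle}Y^{F_*\alpha}$. Equivalently, as noted just below (\ref{gluing_map_Fukaya_trick_eq}), every class contributing to $\pmb F$ is represented by a holomorphic stable disk bounded by a fiber of $\mathscr L$, which is impossible for a nonzero Maslov-zero class.

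The step I expect to be the real obstacle is this last transfer from the vanishing of the $J$-holomorphic moduli spaces to the vanishing of the cohomology-level coefficients $\mC^F_{0,\gamma}$: one must make sure that combining the nontrivial energy-zero products $\m_{k,0}$ with positive-energy pieces inside the homological perturbation cannot manufacture a spurious nonzero Maslov-zero coefficient. This closes by induction on $E(\gamma)$, using that the $\beta=0$ part of $\mC^F$ is the trivial $A_\infty$ homomorphism (so $\mC^F_{1,0}=\id$ and $\mC^F_{k,0}=0$ otherwise); everything else is the bookkeeping already set up in \cite{Yuan_I_FamilyFloer}.
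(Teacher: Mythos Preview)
Your proposal is correct and follows essentially the same approach as the paper. The paper's ``proof'' is just the paragraph preceding the proposition: it invokes the choice-independence from \S\ref{sss_cocycle} to take $J_t=F_{t*}J$, identifies the parameterized moduli space (\ref{moduli_parameterized_eq}) with $\bigsqcup_t \mathcal M_{k,\beta}(J,L_t)$, and then simply says ``From the above discussion, it follows that\ldots'' --- leaving the reader to see that the Maslov-zero terms in (\ref{formula_phi_eq}) vanish. Your argument fills in precisely those steps; in particular, your discussion of the transfer from the empty chain-level parameterized moduli spaces to the vanishing of the cohomology-level coefficients $\mC^F_{0,\gamma}$ via the tree expansion of the homological perturbation is additional care that the paper suppresses (and defers to \cite{Yuan_I_FamilyFloer}), but it is consistent with the intended reasoning.
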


	In comparison, the various polytopal domains in $(\Lambda^*)^n$ are patched together by the maps $Y_i\mapsto T^{c_i}Y_i$. Indeed, there is an admissible covering
	$	(\Lambda^*)^n = \bigcup^\infty_{r= 1}\trop^{-1}(\Delta_r) \equiv \bigcup^\infty_{r= 1} \Sp \Lambda\langle \Delta_r\rangle$
	of polytopal domains for any increasing sequence of rational polyhedrons with $\bigcup_{r\ge 1} \Delta_r=\mathbb R^n$.
	By (\ref{polyhedral_affinoid_algebra_1_eq}), the map $Y_i\mapsto T^{c_i}Y_i$ gives rise to an algebra isomorphism $
	\Lambda\langle \Delta\rangle \to \Lambda\langle \Delta-c\rangle$, hence it induces an isomorphism between
	$\Sp	\Lambda\langle \Delta\rangle \equiv	\trop^{-1}(\Delta)$ and $\Sp 	\Lambda\langle \Delta-c\rangle  \equiv \trop^{-1}(\Delta-c)$ in $(\Lambda^*)^n$.

\begin{cor}
	\label{trivial_shifting_cor}
	Suppose $B_1\subset B_0$ admits an integral affine chart $x: B_1\xhookrightarrow{} \mathbb R^n$. If for every $\tilde q\in B_1$ the Lagrangian fiber $L_{\tilde q}$ bounds no non-constant Maslov-zero $J$-holomorphic stable disk, then there is an isomorphism:
	$
	\pi^\vee|_{B_1}\cong \trop|_{x(B_1)}$.
\end{cor}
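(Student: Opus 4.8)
The plan is to derive the corollary from Proposition~\ref{trivial_transition_map_prop} together with the description of the B-side transition maps recalled just before it. First I would cover $B_1$ by a countable family of sufficiently small rational polyhedra $\{\Delta_r\}$ (small enough that each mirror local chart $\Sp\Lambda\langle\Delta_r,q_r\rangle\cong\trop_{q_r}^{-1}(\Delta_r)$ is defined, i.e. $W$ and the $Q_{pq}$ lie in the polyhedral affinoid algebra there), together with a chosen base point $q_r\in\Delta_r$. Since $B_1$ carries the global integral affine chart $x$, the integral affine structure on each $\Delta_r$ is the restriction of the standard one on $\mathbb R^n$, so I may take the local chart $\varphi_{q_r}$ to be $x-x(q_r)$; consequently the local identification $\pi^\vee|_{\Delta_r}\cong\trop|_{\varphi_{q_r}(\Delta_r)}$ becomes, after composing with the B-side shift $Y_i\mapsto T^{x_i(q_r)}Y_i$, an identification $\pi^\vee|_{\Delta_r}\cong\trop|_{x(\Delta_r)}$. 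The same global chart also trivialises the local system $q\mapsto\pi_1(L_q)$ over $B_1$ (this local system is the integral tangent/cotangent sheaf of the affine manifold, which $x$ identifies with the constant sheaf $\mathbb Z^n$), so for adjacent base points $q_r,q_{r'}$ the isomorphism $F_*:\pi_1(L_{q_{r'}})\to\pi_1(L_{q_r})$ produced by Fukaya's trick is the identity of $\mathbb Z^n$.

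Next I would compare the transition maps on each overlap $\Delta_r\cap\Delta_{r'}$. By hypothesis, no fiber $L_{\tilde q}$ with $\tilde q\in B_1$ bounds a non-constant Maslov-zero $J$-holomorphic stable disk; choosing the Lagrangian isotopy $(L_t)$ along a short path inside $\Delta_r\cup\Delta_{r'}$ from $L_{q_{r'}}$ to $L_{q_r}$, Proposition~\ref{trivial_transition_map_prop} applies, and the gluing map is exactly $Y^\alpha\mapsto T^{\langle\alpha,\,q_r-q_{r'}\rangle}Y^{F_*\alpha}$, which by the previous paragraph is the monomial shift $Y_i\mapsto T^{x_i(q_r)-x_i(q_{r'})}Y_i$. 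On the B-side, the polytopal domains $\trop^{-1}(x(\Delta_r))$ and $\trop^{-1}(x(\Delta_{r'}))$ are glued inside $(\Lambda^*)^n$ by precisely this same monomial map, as recalled from (\ref{polyhedral_affinoid_algebra_1_eq}). Hence the local isomorphisms $\pi^\vee|_{\Delta_r}\cong\trop|_{x(\Delta_r)}$ agree on overlaps.

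Finally I would glue: $\{\Delta_r\}$ is an admissible covering of $B_1$, hence $\{(\pi^\vee)^{-1}(\Delta_r)\}$ is one of $(\pi^\vee)^{-1}(B_1)$, while $\{x(\Delta_r)\}$ covers the open set $x(B_1)\subset\mathbb R^n$ and $\{\trop^{-1}(x(\Delta_r))\}$ is an admissible covering of $\trop^{-1}(x(B_1))$; the overlap-compatible local isomorphisms then patch to a single isomorphism of rigid analytic spaces intertwining $\pi^\vee$ with $\trop$ over $B_1\cong x(B_1)$, which is the assertion. The main obstacle is the integral-affine bookkeeping of the first paragraph: one must check carefully that a single global affine chart simultaneously (i) trivialises the $\pi_1(L_q)$-local system, so that the Fukaya-trick map $F_*$ is literally $\id_{\mathbb Z^n}$, and (ii) makes the a priori chart-dependent identifications $\pi^\vee\cong\trop$ differ on overlaps only by the monomial shifts of (\ref{polyhedral_affinoid_algebra_1_eq}) --- so that the A-side gluing maps of Proposition~\ref{trivial_transition_map_prop} coincide on the nose with the B-side patching maps and no residual correction survives.
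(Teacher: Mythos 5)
Your proposal is correct and takes essentially the same approach as the paper: the paper's proof is just two lines, observing that the mirror local charts are the polytopal domains $\Sp\Lambda\langle\Delta,q\rangle\cong\trop^{-1}(\Delta)$ and that Proposition~\ref{trivial_transition_map_prop} forces both sides to share the same local charts and transition maps. You have simply spelled out the integral-affine and $\pi_1(L_q)$-trivialisation bookkeeping that the paper leaves implicit (namely that a single global affine chart makes $F_*=\id_{\mathbb Z^n}$ and reduces the A-side gluing maps to the monomial shifts of (\ref{polyhedral_affinoid_algebra_1_eq})).
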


\begin{proof}
	Recall that the local charts are the polytopal domains $\Sp\Lambda\langle \Delta,q\rangle\cong \trop^{-1}(\Delta)$.
	By Proposition \ref{trivial_transition_map_prop}, the both sides share the same local charts and transition maps.
\end{proof}

\subsection{Open Gromov-Witten invariants}
\label{ss_OGW}

\subsubsection{Definition}
\label{sss_OGW_defn}

For a Maslov-two class $\beta\in \pi_2(X,L)$, the number $\m_{0,\beta}/\one $ in the superpotential function $W^\vee$ is also known as the (genus-zero one-pointed) \textit{open Gromov-Witten invariant}, denoted by
\begin{equation}
	\label{OGW_eq}
	\mathsf n_\beta := \mathsf n_{\beta,J}
\end{equation}
See e.g. \cite[Definition 2.14]{CLL12} or \cite[Definition 4.4]{lau2014open}.
In \cite{FOOOBookOne}, the value $\mathsf n_{\beta}$ can be regarded as the intersection number of the `$n$-chain'
$\ev_0: \mathcal M_{1,\beta}(J,L)\to L$ with the point class $[pt]$ in $L$.
In general, it depends on both $J$ and other auxiliary data (e.g. the harmonic contraction used for the homological perturbation). But, one can show that if $\beta$ is non-separable (with respect to $J$) in the sense that there does not exist two nontrivial stable disks $u_1,u_2$ with $[u_1]+[u_2]=\beta$, then the number $\mathsf n_\beta$ only depends on $J$. (The proof is tedious but straightforward.) 
Recall that the Maslov index of any holomorphic disk bounded by $L$ is assumed to be non-negative. So, when there does not exist any Maslov-zero $J$-holomorphic stable disk bounded by $L$, any Maslov-two class $\beta$ is non-separable.

\subsubsection{Open GW invariants along a Lagrangian isotopy}
More generally, we study how the open GW invariants evolve along a smooth Lagrangian isotopy $\mathscr L=(L_t)_{t\in\oi}$.
We assume every $L_t$ is oriented, compact and relatively-spin, or we simply assume $L_t$ is a torus.
Note that the relative homotopy groups $\pi_2(X,L_t)$ for various $t\in\oi$ can be identified with each other. Let $\beta=\beta_t\in\pi_2(X,L_t)$ with $\mu(\beta)=2$.
The following result is well known; see e.g. the proof of \cite[Proposition 4.30]{CLL12}.

\begin{thm}
	\label{OGW_inv_isotopy_thm}
	If every $L_t$ does not bound any Maslov index zero $J$-holomorphic stable disk, then the open GW invariants $\mathsf n_{\beta_t, J}$ keep constant along
	the Lagrangian isotopy $\mathscr L=(L_t)$.
	Namely, we have $\mathsf n_{\beta_{t_1}, J}= \mathsf n_{\beta_{t_2}, J}$ for any $0\le t_1,t_2\le 1$.
\end{thm}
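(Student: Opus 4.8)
The plan is to reduce the statement to the invariance of an appropriate moduli-space intersection number under a one-parameter deformation, exactly the mechanism that the gluing construction and Proposition \ref{trivial_transition_map_prop} already exploit. First I would fix $0\le t_1<t_2\le 1$ and consider the sub-isotopy $\mathscr L'=(L_t)_{t\in[t_1,t_2]}$; since relative homotopy groups are canonically identified along the isotopy, a Maslov-two class $\beta$ gives a well-defined class $\beta_t\in\pi_2(X,L_t)$ for every $t$. The key input is that, by hypothesis, no $L_t$ bounds a non-constant Maslov-zero $J$-holomorphic stable disk; as noted in \S\ref{sss_OGW_defn}, this makes every Maslov-two class \emph{non-separable} for every $t$, so $\mathsf n_{\beta_t,J}$ depends only on $J$ (and not on the auxiliary perturbation data) for each $t$ individually. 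This is what makes a clean cobordism argument possible.

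Next I would assemble the parameterized moduli space $\mathcal M_{1,\beta}(J,\mathscr L'):=\bigsqcup_{t\in[t_1,t_2]}\mathcal M_{1,\beta}(J,L_t)$, together with the total evaluation map to the total space of the isotopy, or more conveniently, after applying Fukaya's trick to straighten the family (choosing $F_t\in\diff_0(X)$ with $F_t(L_t)=L_{t_1}$ and setting $J_t=F_{t*}J$), identify this with $\bigsqcup_t\mathcal M_{1,\beta}(J_t,L_{t_1})$ sitting over $L_{t_1}$ via $\ev_0$. This parameterized space has virtual dimension one higher than each slice, i.e.\ $\dim L_{t_1}+1$, so $\ev_0$ is a virtual $(n+1)$-chain in $[t_1,t_2]\times L_{t_1}$. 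Its virtual boundary consists of: (i) the two endpoint fibers $\mathcal M_{1,\beta}(J_{t_i},L_{t_1})$, which compute $\mathsf n_{\beta_{t_1},J}$ and $\mathsf n_{\beta_{t_2},J}$ after capping with $[pt]$; and (ii) the codimension-one strata where the disk degenerates, i.e.\ where a disk or sphere bubbles off. Because $X$ is semipositive (all disk Maslov indices $\ge 0$) and $\beta$ has Maslov index two, any nodal degeneration of a stable disk in class $\beta$ must split $\beta=\beta'+\beta''$ with both summands represented by stable disks; but the absence of Maslov-zero disks for every $L_t$ forces one summand to be constant — that is, the bubbling strata are empty (sphere bubbles have Maslov $=2c_1\ge 2$, which would leave Maslov $0$ on the disk component, again excluded). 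Hence the only boundary contributions are the two endpoint fibers, and Stokes' theorem for the virtual chain (standard in the Fukaya--Oh--Ohta--Ono package) gives $\mathsf n_{\beta_{t_1},J}=\mathsf n_{\beta_{t_2},J}$.

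The main obstacle is the transversality/virtual-perturbation bookkeeping needed to make ``virtual chain with boundary = the two endpoint slices'' rigorous: one must choose a coherent system of Kuranishi structures (or the harmonic-contraction data used throughout \cite{Yuan_I_FamilyFloer}) on the parameterized family compatible with the ones on the slices, and verify that the induced perturbation on $\partial$ of the parameterized moduli space agrees with the slice perturbations — this is precisely the ``tedious but straightforward'' point alluded to in \S\ref{sss_OGW_defn}. A secondary point is ensuring that the identification of $\pi_2(X,L_t)$ along $\mathscr L'$ is compatible with energy in the sense $E(\beta_t)$ varying continuously, so that the Novikov-weighted count $\mathsf n_{\beta_t}$ — which is a pure number, the energy being absorbed into $T^{E(\beta_t)}$ in $W^\vee$ — is genuinely locally constant and the argument localizes. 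Since the statement is asserted to be well known (``see e.g.\ the proof of \cite[Proposition 4.30]{CLL12}''), I would keep the exposition at the level of this cobordism sketch and cite the standard references for the analytic foundations rather than reproducing the Kuranishi-theoretic details.
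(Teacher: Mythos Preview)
Your proposal is correct and matches the paper's approach: the paper does not give its own proof of Theorem~\ref{OGW_inv_isotopy_thm} at all, but simply declares it well known and cites \cite[Proposition~4.30]{CLL12}. Your cobordism sketch via the parameterized moduli space and Fukaya's trick is exactly the standard argument behind that citation, and you explicitly note that you would present it at this level of detail and defer the analytic foundations to the literature --- which is precisely what the paper does (only more tersely).

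One small remark: your claim that ``sphere bubbles have Maslov $=2c_1\ge 2$'' is not literally a consequence of the semipositivity hypothesis as stated in \S\ref{sss_family_Floer_general} (which concerns stable \emph{disks}); you are implicitly treating a sphere attached to a ghost disk as a stable disk and invoking the no-Maslov-zero hypothesis to rule out $c_1=0$ spheres. That is fine for the applications in this paper, but worth making explicit if you flesh out the sketch.
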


\subsection{Toric fibration}
\label{ss_compact_toric}

We want to reinterpret \cite{FOOOToricOne} in the framework of \cite{Yuan_I_FamilyFloer}. It is actualy straightforward.

Let $M\cong \mathbb Z^n$ and $N\cong \mathbb Z^n$ be dual lattices of rank $n$. Define $M_{\mathbb R}=M\otimes \mathbb R$, $N_{\mathbb R}=N\otimes \mathbb R$, and $T_N=N_{\mathbb R}/N$. So, $T_N\cong N_{\mathbb R}/N\cong \mathbb R^n / \mathbb Z^n$.
Suppose $P\subset M_{\mathbb R}$ is a Delzant polyhedron. Let $X$ be the compact toric manifold associated to the normal fan of $P$. Let $\{v_i\}_{i=1}^d$ denote the inner normal vectors of $P$. Consider the linear functionals
$
\ell_i(q)=\langle v_i,  q \rangle -c_i$
$\ell_\infty (q)= \langle  \textstyle  \sum_{i=1}^d v_i, q \rangle $
on $M_{\mathbb R}$.
Then,
$
P=\{ q \in M_{\mathbb R} \mid \ell_i(q)\ge 0 \}
$ and $\partial P=\bigcup_i P_i$ where $P_i$ is the facet of $P$ defined by the zero locus $\ell_i=0$. Following {\cite[Theorem 4.5]{Guillemin1994kaehler}}, we choose the symplectic form to be $ \omega=\frac{\sqrt{-1}}{2\pi} \partial\bar\partial \big(\pi^* (\sum_{i=1}^m (\log \ell_i)+\ell_\infty ) \big)$; see also \cite[\S 2.2]{FOOOToricOne}.

Let $\pi:X\to (\mathbb R^n)^*\cong M_{\mathbb R}$
be the moment map associated to the toric $T^n$-action. Then, $\pi(X)=P$.
The irreducible toric divisor $D_i$ associated to the inner normal vector $v_i$ is
$D_i= \pi^{-1}(P_i)$.
For any point $q$ in the interior $P^\circ$, the fiber
$L_q:=\pi^{-1}(q)$
is a \textit{Lagrangian $T^n$-orbit}.

Let $e_1,\dots, e_n$ be a $\mathbb Z$-bases of $N$ and let $e_1',\dots,e_n'$ be the dual basis of $M$.
Let $S_i^1\cong S^1$ be the subgroup generated by the $i$-th generator element in $T_N\cong (S^1)^n$.
The orbits $S_i^1(q)$ of these subgroups $S_i^1$ in the $T^n$-orbit $L_q$ give rise to a basis of $\pi_1(L_q)\equiv H_1(L_q ;\mathbb Z)$.
Accordingly, we have natural isomorphisms
$
N\cong H_1(L_q ;\mathbb Z)$ and $  M\cong H^1(L_q;\mathbb Z)$. The pairing $ \pi_1(L_q)\otimes H^1(L_q)  \to \mathbb R$ can be also identified with $N \otimes M_{\mathbb R}  \to\mathbb R$.
On the other hand,
$
H_2(X,\pi^{-1}(P^\circ)) \cong H_2(X,L_q)\cong \pi_2(X,L_q) $ has a basis consisting of
$\beta_i\equiv \beta_{i,q} \in \pi_2(X,L_q)$
which is topologically defined by taking a small disk transversal to $D_i$.
Note that
$
\beta_i\cdot D_j=\delta_{ij}$
and
$\partial \beta_i \cong v_i$; c.f. \cite[\S 6]{FOOOToricOne}.
Every $\beta_i$ can be represented by a holomorphic disk
$u_i:(\mathbb D,\partial\mathbb D)\to (X,L_q)$
with $\mu(\beta_i)=2$ and
$E(\beta_{i,q})=  \ell_i(q)$.
The holomorphic disks $u_i$ are Fredholm regular by \cite[Theorem 6.1]{Cho_Oh}.
Denote by $H_2^\eff(X;\mathbb Z)$ the \textit{effective cone}, the cone in $H_2(X;\mathbb Z)$ generated by holomorphic spheres in $X$.

The restriction of the moment map $\pi$ over the interior $\mathrm{Int} P$ of $P$ will be still denoted by $\pi$.
Write $P_q:=\mathrm{Int} P -q \subset \mathbb R^n$ for any $q\in \mathrm{Int} P$.
We can identify $\Lambda[[N]]$ with $\Lambda[[\pi_1(L_q)]]$ for various $q$.
It is well known that $\mathsf n_{\beta_i}=1$ for all $1\le i\le d$ \cite{Cho_Oh}.

\begin{thm}
	\label{Family_Floer_toric_thm}
	Suppose that there is no nontrivial holomorphic sphere with negative Maslov index. 
	Fix $q\in \mathrm{Int} P$.
	The mirror triple $\mathbb  X^\vee_q:=(X_q^\vee, W_q^\vee, \pi_q^\vee)$ associated to $(X,\pi)$ is given by $X_q^\vee \equiv \trop^{-1}(P_q)$, $\pi_q^\vee=\trop: X^\vee_q \to P_q$ and
	\begin{equation}
	\label{W_u_eq}
	\textstyle
	W^\vee_q=\sum_{i=1}^d 
	\Big(
	1+\sum_{\alpha\in H^\eff_2(X;\mathbb Z)\setminus\{0\}} T^{\omega\cap \alpha} \mathsf n_{\beta_i+\alpha}
	\Big) \cdot T^{ \ell_i(q)} Y^{v_i} 
	\end{equation}
	such that for $q_1\neq  q_2$ in the base, we have a natural isomorphism $\mathbb X^\vee_{q_1} \cong \mathbb X^\vee_{q_2}$ given by
	$	Y^v\mapsto T^{  \langle v, q_2-q_1 \rangle}Y^v$. In special, when $X$ is Fano, the superpotential function is given by
	$W_q^\vee= \sum_{i=1}^d T^{\ell_i(q)} Y^{v_i}$.
\end{thm}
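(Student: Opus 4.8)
The plan is to let the general machinery of \S\ref{S_nonarchimedean_review} do the work, so that every clause reduces either to a result already recorded there or to the classical disk counts of Cho--Oh and of \cite{FOOOToricOne}. First I would check that $(X,\pi)$ falls within the scope of the construction. Since $X$ carries no holomorphic sphere of negative Chern number, every stable disk bounded by a fiber has nonnegative Maslov index, so $\pi$ is semipositive over $\mathrm{Int}P$. The key geometric point is that a fiber $L_q$, $q\in\mathrm{Int}P$, bounds no \emph{nonconstant} Maslov-zero holomorphic stable disk: for a holomorphic disk in class $\beta$ one has $\mu(\beta)=2\sum_i\beta\cdot D_i$ with each $\beta\cdot D_i\ge 0$ by positivity of intersection, while a nonconstant disk necessarily meets the toric boundary $\bigcup_i D_i=X\setminus(\mathbb C^*)^n$, so $\mu(\beta)\ge 2$. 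This is exactly the hypothesis needed for Corollary \ref{trivial_shifting_cor} and Proposition \ref{trivial_transition_map_prop}, and it also ensures that the invariant $\mathsf n_\beta=\m_{0,\beta}/\one$ of \S\ref{sss_OGW_defn} is well defined and agrees with the toric open Gromov--Witten invariant of \cite{FOOOToricOne}.

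Next I would identify the mirror space and the gluing data. The base $B_0$ of the SYZ fibration is precisely the open polytope $\mathrm{Int}P$, which carries the single integral affine chart $\mathrm{Int}P\hookrightarrow M_{\mathbb R}\cong\mathbb R^n$; centring this chart at $q$ turns it into $P_q$. Since no fiber bounds a nonconstant Maslov-zero disk, Corollary \ref{trivial_shifting_cor} applies with $B_1=\mathrm{Int}P$ and gives $\pi^\vee\cong\trop|_{P_q}$; unwinding the local charts $\Sp\Lambda\langle\Delta,q\rangle\cong\trop^{-1}(\Delta)$ then yields $X_q^\vee\equiv\trop^{-1}(P_q)$ with $\pi_q^\vee=\trop$. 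Proposition \ref{trivial_transition_map_prop} likewise shows that moving the base point from $q_1$ to $q_2$ produces the transition map $Y^v\mapsto T^{\langle v,q_2-q_1\rangle}Y^v$, which is the claimed isomorphism $\mathbb X^\vee_{q_1}\cong\mathbb X^\vee_{q_2}$ on the level of space and fibration; its compatibility with the potentials is settled by the next step.

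To compute the superpotential I would start from the definition $W^\vee|_q=\sum_{\mu(\beta)=2}T^{E(\beta)}Y^{\partial\beta}\mathsf n_\beta$ and invoke the structural result of \cite{FOOOToricOne}: a Maslov-two class with $\mathsf n_\beta\ne 0$ is necessarily of the form $\beta=\beta_i+\alpha$ for some $i$ and some effective $\alpha\in H_2^{\eff}(X;\mathbb Z)$ with $\alpha\cdot D_j=0$ for all $j$ (equivalently $c_1(\alpha)=0$). Since $\partial(\beta_i+\alpha)=\partial\beta_i=v_i$, $E(\beta_i)=\ell_i(q)$, $E(\alpha)=\omega\cap\alpha$, and $\mathsf n_{\beta_i}=1$ by Cho--Oh, regrouping the sum according to $i$ gives exactly (\ref{W_u_eq}). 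Compatibility with the transition maps above is then automatic, because $\ell_i$ is affine with linear part $\langle v_i,\cdot\rangle$, so $T^{\ell_i(q_2)}Y^{v_i}$ is the image of $T^{\ell_i(q_1)}Y^{v_i}$ under $Y^{v_i}\mapsto T^{\langle v_i,q_2-q_1\rangle}Y^{v_i}$, while the bracketed correction factor involves only $\omega\cap\alpha$ and not the base point. Finally, when $X$ is Fano one has $c_1(\alpha)>0$ for every nonzero effective $\alpha$, so no correction term survives and $W^\vee_q=\sum_{i=1}^d T^{\ell_i(q)}Y^{v_i}$.

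The hard part is not the toric bookkeeping but making precise the sense in which this ``reinterprets'' \cite{FOOOToricOne}: one must be certain that the open Gromov--Witten numbers extracted from the harmonic-contraction homological perturbation of \cite{Yuan_I_FamilyFloer} literally coincide with the virtual counts of \cite{FOOOToricOne}, and that the standing hypothesis of the mirror construction --- vanishing of the weak Maurer--Cartan terms $Q_{pq}$ --- genuinely holds for toric torus fibers. Both follow from the first step: the absence of nonconstant Maslov-zero disk components pins $\m_{0,\beta}/\one$ down independently of the auxiliary data and forces the $H^2(L_q)$-component of the Maurer--Cartan series to vanish. Verifying these two compatibility statements carefully, rather than the computation itself, is where the argument deserves attention.
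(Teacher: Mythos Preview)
Your proposal is correct and follows essentially the same route as the paper: both invoke the absence of nonconstant Maslov-zero stable disks on toric fibers (the paper cites \cite[Theorem 11.1(5)]{FOOOToricOne} where you sketch the standard $\mu(\beta)=2\sum_i\beta\cdot D_i$ argument directly), then apply Corollary~\ref{trivial_shifting_cor} to trivialize the mirror space and Proposition~\ref{trivial_transition_map_prop} for the transition maps, and finally use the structural result of \cite{FOOOToricOne} together with $\mathsf n_{\beta_i}=1$ to derive (\ref{W_u_eq}). Your closing paragraph on the vanishing of the $Q_{pq}$ and the compatibility of $\m_{0,\beta}/\one$ with the virtual counts of \cite{FOOOToricOne} is a welcome elaboration that the paper leaves implicit.
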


\begin{proof}
	\setlength{\parskip}{0em}
	By \cite[Theorem 11.1(5)]{FOOOToricOne}, for every $q\in P^\circ$, the Lagrangian torus fiber $L_q$ bounds no stable disk whose Maslov index is zero.
	Hence, by Corollary \ref{trivial_shifting_cor}, we have $X_q^\vee\cong \trop^{-1}(P_q)$ and $\pi_q^\vee=\trop$.
	Moreover, by \cite[Theorem 11.1]{FOOOToricOne} again, $\mathsf n_\beta\neq 0$ only if $\beta$ is in the form $\beta=\sum_{i=1}^d k_i\beta_i+\alpha$ for some $\alpha\in H_2^\eff (X;  \mathbb Z)$; since $\mu(\beta)=2$, we must have $\beta=\beta_i+\alpha$ with $c_1(\alpha)=0$.
	Recall that $\mathsf n_{\beta_i}=1$. Now, the formula of $W^\vee_q$ in (\ref{W_u_eq}) follows by definition.
	As for the Fano case, 
	just notice that we must have $\beta=\beta_i$ in addition.
	Finally, let $q_1$ and $q_2$ be two distinct points in $\mathrm{Int}P$. Note that $\ell_i(q_2)-\ell_i(q_1)=\langle  v_i, q_2-q_1 \rangle$, and it is direct to check $W_{q_1}^\vee$ is transformed to $W_{q_2}^\vee$.
\end{proof}

For later use, we remark that the above argument actually tells that $\mathsf n_{\beta_j}=1$, and if $\mathsf n_{\beta}\neq 0$, $\beta=\beta_j+\alpha$ for some $1\le j\le d$ and $\alpha \in H_2^\eff(X)$.

\section{Gross's special Lagrangian fibration: Review}
\label{S_Gross_fibration}

\subsection{Notations and descriptions}
\label{ss_notation_construction_Gross}

\subsubsection{Toric data}
\label{sss_toric_data}
Let $\langle ,\rangle$ denote the pairing $N\otimes M \to \mathbb Z$.
Let $\Sigma\subset N_{\mathbb R}$ be a strongly-convex smooth fan, and its rays are $v_0, v_1,\dots, v_d\in N$.
Suppose there exists $m_0\in M$ so that $\langle v_i,m_0\rangle =1$ for all $i=0,1,\dots, d$ and $\langle v,m_0\rangle \ge 1$ for all $v\in|\Sigma| \cap N -\{0\}$.

The (noncompact) toric variety $X=X_{\Sigma}$ defined by the fan $\Sigma$ is Calabi-Yau.
Every ray $v_i$ gives an irreducible toric divisor $D_i$ in $X$. Every $m\in M$ gives a character $\chi^m: T_N\to \mathbb C^*$ on $X$ with
$(\chi^m)=\sum_{i=0}^d \langle v_i, m\rangle D_i$.
Fix a $\mathbb Z$-basis 
$\{e'_0=m_0, e'_1,\dots,e'_{n-1}\}$
of the lattice $M$ and the dual basis $\{e_0=v_0, e_1,\dots,e_{n-1}\}$ of $N$.
The sublattice
$
\tilde N=\{ n\in N\mid \langle n, m_0\rangle =0\} \equiv \mathbb Z\{e_1,\dots,e_{n-1}\}
$ has its dual lattice $\tilde M=\Hom_{\mathbb Z}(\tilde N,\mathbb Z)$ identified with $
M/\mathbb Z m_0
\cong
\{ m\in M \mid \langle v_0,m\rangle =0 \}
\cong
\mathbb Z\{e'_1,\dots,e_{n-1}'\}$.
Let $\pr: M\to \tilde M$ and
$\pr: M_{\mathbb R}\to \tilde M_{\mathbb R}\equiv \tilde M\otimes \mathbb R$ be the projections.
Conventionally, we think
$
M_{\mathbb R}\cong  \tilde M_{\mathbb R}\oplus \mathbb R m_0 \cong \mathbb R \{ e_1' ,\dots, e_{n-1}'\} \oplus \mathbb R e_0' \cong \mathbb R^{n-1}\oplus \mathbb R$
and $
N_{\mathbb R}\cong \tilde N_{\mathbb R}\oplus \mathbb R v_0\cong\mathbb R\{ e_1,\dots, e_{n-1}\}\oplus \mathbb R e_0\cong \mathbb R^{n-1}\oplus \mathbb R$,
where the $e'_0$ or $e_0$ corresponds to the last coordinate rather than the first one.

Consider a polyhedral complex $\mathscr P$ of the convex hull of $\Sigma(1)=\{v_0,\dots,v_d\}$ in the hyperplane $\mathbb R^{n-1}\times \{1\}\equiv \{v\in N_{\mathbb R}\mid \langle v, m_0\rangle =1\}$ such that the set $\mathscr P^{(0)}$ of vertices is precisely $\Sigma(1)$. Hence, $|\Sigma|=\mathbb R_{\ge 0}\cdot  \mathscr P \subset N_{\mathbb R}$.
As in \cite[\S 3.1]{AAK_blowup_toric}, we choose a set of constant real numbers $\{c_0,c_1,\dots,c_d\}$ which is \textit{adapted to $\mathscr P$} in the sense that there is a convex piecewise linear function $\rho$ on the convex hull of $\Sigma(1)$ so that $\rho(v_i)=c_i$ and its maximal domains of linearity are exactly the cells of $\mathscr P$.
Now, we consider the tropical polynomial $\varphi: \tilde M_{\mathbb R} \cong \mathbb R^{n-1} \to \mathbb R$ defined by
$\varphi( \xi)=\max\{ -\langle v_i, \xi \rangle  -c_k \mid k=0,1,\dots, d\}$.
It is a convex function, and its epigraph
$P=
\{ (\eta,\xi)\in M_{\mathbb R} \mid \eta\ge \varphi(\xi)\}
=
\{m\in M_{\mathbb R} \mid \langle v_i,m\rangle \ge c_i \ \forall \ 0\le i\le d \}$
is a convex polytope that defines the fan structure $\Sigma$.

Define 
$P_I=\{m\in P\mid \langle v_i,m \rangle  =c_i, \ \forall \ i\in I\}$
for $I\subset\{0,1,\dots, d\}$, and
then we have $v_i= (-\nabla \varphi, 1)|_{\pr(P_i)}$. By convex analysis, given $p\in \pr(P_i)$ and $p'\in \pr(P_\ell)$, we have 
\begin{equation}
\label{convex_imply_eq}
\langle v_\ell -v_i, p' -p \rangle <0
\end{equation}
Let $\Pi \subset \tilde M_{\mathbb R}$ denote the tropical hypersurface defined by $\varphi$, i.e. the set of points where the maximum is achieved at least twice. (e.g. when $X=K_{\mathbb P^1}$, the set $\Pi$ consists of two points.) In general, we have 
$\Pi=\bigcup_{|I|\ge 2} \pr( P_I)$.
If for any $0\le i\le d$, we denote by $H_i$ the interior of $\pr(P_i)$, then we have
$
\tilde M_{\mathbb R}\setminus \Pi=\bigcup_{i=0}^d H_i =: H$.
We will call $H$ the \textit{wall}. Observe that $\Pi$ gives the dual cell complex of $\mathscr P$.


%

\subsubsection{Moment map}
\label{sss_moment_map}

Let $T_N=N\otimes U(1)\cong T^n$ and $T_N^{\mathbb C}\cong (\mathbb C^*)^n$ denote the real and complex torus respectively.
Take a toric K\"ahler form $\omega$ associated to the polytope $P$. Denote the moment map by
$\mu_X: X\to P\subset M_{\mathbb R}$.
Besides, note that $T_{\tilde N}\cong T^{n-1}$, and we denote the moment map for the $T_{\tilde N}$-action on $X$ by 
$
\tilde\mu_X: X\to \tilde M_{\mathbb R}
$.
Clearly, $\tilde \mu_X= \pr\circ \mu_X$.

For any $0\le i\le d$, the irreducible toric divisor $D_i$ itself is also a toric manifold.
The $\tilde \mu_X$ induces a moment map
$\mu_{D_i} :=\tilde \mu_X|_{D_i} : D_i \to \pr(P_i)\equiv \bar H_i$
for a Hamiltonian $T^{n-1}$-action.
Then, the irreducible toric divisors in the toric manifold $D_i$ are of the form $D_i\cap D_k$ for some $0\le k\le d$.
For clarity, we set $\mathcal I(i)=\{ k \mid D_i\cap D_k\neq \varnothing\}$.
The fiber of $\mu_{D_i}$ over a point in $H_i$ is a Lagrangian $T^{n-1}$-orbit.
Suppose $L'\cong T^{n-1}$ is such a Lagrangian fiber.
Then, we denote by
$
\beta_{ik} \in H_2(D_i, L')\cong \pi_2(D_i, L')$ the classes such that $\beta_{ik} \cdot D_\ell =\delta_{k\ell}$ for any $\ell\in \mathcal I(i)$. Also, every $\beta_{ik}$ can be represented by a holomorphic disk
$
\label{u_ik_D_i_eq}
u_{ik}:  (\mathbb D,\partial\mathbb D) \to ( D_i, L')$ so that
$
u_{ik}\cdot D_k=1$ and $u_{ik}\cdot D_{k'}=0$ for $k'\neq k, \ 0\le k'\le d$.

\subsubsection{Gross's fibration}
\label{sss_Gross_fibration_defn}

Define $w(x)=\chi^{m_0}(x)$
to be the meromorphic function associated to $m_0\in M$.
Fix a constant $\epsilon>0$, and we define $\mathfrak \rho(x)=|w(x)-\epsilon|^2-\epsilon^2$.
Let $B=\tilde M_{\mathbb R}\times \mathbb [-\epsilon^2, \infty)$.
In this paper, the \textit{Gross's fibration} refers to the map
$
\hat\pi: X\to B$
defined by $\hat\pi(x)=(\tilde\mu_X(x), \rho(x))$.
The \textit{discriminant locus} of $\hat\pi$ is given by
$
\Gamma := \partial B \cup ( \Pi \times \{0\})$
(see e.g. \cite[Proposition 4.9]{CLL12}). We set $B_0=B-\Gamma$ and $X_0=\pi^{-1}(B_0)$. The restriction of $\hat\pi$ over $B_0$ yields a smooth Lagrangian torus fibration, denoted by
$
\pi:X_0\to B_0$, which we still call the \textit{Gross's fibration}.
By {\cite{Gross_ex} or \cite[Proposition 4.3 \& 4.7]{CLL12}}, it is a special Lagrangian torus fibration with respect to the holomorphic volume form $(w-\epsilon)^{-1} d\zeta_0\wedge d\zeta_1\wedge \cdots d\zeta_{n-1}$ where $\zeta_i$ is the meromorphic function corresponding to $e_k'\in M$.

Consider the following subsets of $B_0$:
$B_+ = \tilde M_{\mathbb R}\times (0,+\infty)$,
$B_- = \tilde M_{\mathbb R} \times (-\epsilon,0)$,
$H =  \big(\tilde M_{\mathbb R} \setminus \Pi\big) \times \{0\}$, and
$
\partial B = \tilde M_{\mathbb R}\times \{-\epsilon^2\}$.
We will call $H$ the \textit{wall},  since only the Lagrangian fibers over a point in $H$ can bound a nontrivial Maslov zero holomorphic disk (see Lemma \ref{Maslov_zero_locate_lem} below).
For simplicity, we identify $H_i$ (resp. $H$) in $\tilde M_{\mathbb R}$ with $H_i\times \{0\}$ (resp. $H\times \{0\})$ in $M_{\mathbb R}\equiv \tilde M_{\mathbb R}\times \mathbb R$. Then, we have $H=\bigsqcup_i H_i$ and $B_0=B_+\cup H\cup B_-$.
From $\tilde\mu_X(D_i)=\pr (P_i)=\bar H_i$, it follows that $\pi(D_i)=(\tilde \mu_X, \rho)(D_i)=\bar H_i \times \{0\}$.
Finally, we define
$\mathscr E:=\pi^{-1}(\partial B) = \{x\in X\mid w(x)=\epsilon\}$.

In the toric case (\S \ref{ss_compact_toric}), the Lagrangian fibers are the orbits of Hamiltonian $T^n$-action. The Gross's fibers have a weaker symmetry that comes from the action of sub-torus $T_{\tilde N}\cong T^{n-1}$.
For
$q=(q_1,q_2)\in B_0\subset B=\tilde M_{\mathbb R} \times [-\epsilon^2,\infty)$,
the Gross's fiber $L_q=\{ x\in X \mid \tilde\mu_X(x)=q_1, |w(x)-\epsilon|^2=\epsilon^2+q_2\}$
can be viewed as the union of an $S^1$-family of $T_{\tilde N}$-orbits:
$L_q = \bigsqcup_{\theta\in [0,2\pi)} S_q(\theta)$
where $
S_q(\theta)
:=\{ x\in X \mid \tilde\mu_X(x)=q_1, \ w(x)=\epsilon+ e^{i\theta}\sqrt{\epsilon^2+q_2}
\}$
is an orbit of the $T_{\tilde N}$-action.
The following statements are clear from the definition.

\begin{prop}
	\label{L_r_intersect_divisor_prop}
	Fix $c_0\in \mathbb C$. The intersection $L_q\cap w^{-1}(c_0)$ is either the empty set or some $S_q(\theta)$ such that $c_0=\epsilon+e^{i\theta}\sqrt{\epsilon^2+q_2}$. In other words, the holomorphic function $w:X\to \mathbb C$ restricts to a $T^{n-1}$ fibration on $L_q$ over the circle $\{z\in\mathbb  C \mid |z-\epsilon|=\sqrt{\epsilon^2+q_2} \}$.
Therefore, when $c_0\neq 0$, $w^{-1}(c_0)$ is topologically $(\mathbb C^*)^{n-1}$, so $\pi_2(w^{-1}(c_0), L_q\cap w^{-1}(c_0)) \cong \pi_2( (\mathbb C^*)^{n-1}, T^{n-1})=0$. In particular, there is no non-trivial holomorphic disk in $w^{-1}(c_0)$ bounded by $L_q \cap w^{-1}(c_0)$.
\end{prop}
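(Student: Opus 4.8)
\emph{Proof strategy.} The plan is to read the fibration statement directly off the description of $L_q$ in \S\ref{sss_Gross_fibration_defn}, and then reduce the vanishing of the relative $\pi_2$ to the standard computation for a compact torus orbit inside an algebraic torus. First I would prove the fibration claim. Writing $C_q:=\{z\in\mathbb C\mid |z-\epsilon|=\sqrt{\epsilon^2+q_2}\,\}$, a point $x\in L_q$ satisfies $|w(x)-\epsilon|^2=\epsilon^2+q_2$, i.e.\ $w(x)\in C_q$; conversely every $c_0\in C_q$ can be written uniquely as $c_0=\epsilon+e^{i\theta}\sqrt{\epsilon^2+q_2}$ for some $\theta\in[0,2\pi)$. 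Hence $L_q\cap w^{-1}(c_0)$ equals $S_q(\theta)$ when $c_0\in C_q$ and is empty otherwise. Since $w=\chi^{m_0}$ is invariant under $T_{\tilde N}$ (by the very definition $\tilde N=\{n\in N\mid\langle n,m_0\rangle=0\}$) and each $S_q(\theta)$ is an orbit of the $T_{\tilde N}\cong T^{n-1}$-action, this is exactly the assertion that $w|_{L_q}\colon L_q\to C_q$ is a $T^{n-1}$-bundle over the circle $C_q$.

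Next I would identify $w^{-1}(c_0)$ for $c_0\neq 0$. By the standing hypothesis $\langle v_i,m_0\rangle=1$ for every ray $v_i$ (\S\ref{sss_toric_data}), the function $w=\chi^{m_0}$ is regular on $X$ with divisor $(\chi^{m_0})=\sum_{i=0}^{d}\langle v_i,m_0\rangle D_i=\sum_i D_i$, the full toric boundary; thus $w$ vanishes precisely on $X\setminus T_N^{\mathbb C}$, so $c_0\neq 0$ forces $w^{-1}(c_0)\subset T_N^{\mathbb C}\cong(\mathbb C^*)^n$. As $m_0$ belongs to a $\mathbb Z$-basis of $M$, the character $\chi^{m_0}$ is one of the coordinates of an identification $T_N^{\mathbb C}\cong(\mathbb C^*)^n$, so $w^{-1}(c_0)$ is the fiber of a coordinate projection $(\mathbb C^*)^n\to\mathbb C^*$, hence biholomorphic to $(\mathbb C^*)^{n-1}$. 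Invariance of $w$ under $T_{\tilde N}$ means $T_{\tilde N}$ acts on $w^{-1}(c_0)\cong(\mathbb C^*)^{n-1}$ in the standard linear way, so the slice $S_q(\theta)=L_q\cap w^{-1}(c_0)$ is an honest orbit of this action, i.e.\ a standard real torus $T^{n-1}\subset(\mathbb C^*)^{n-1}$.

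Finally I would compute $\pi_2\big((\mathbb C^*)^{n-1},T^{n-1}\big)$ from the long exact sequence of the pair: $(\mathbb C^*)^{n-1}$ deformation retracts onto the orbit $S_q(\theta)$, so it is aspherical, $\pi_2\big((\mathbb C^*)^{n-1}\big)=0$, and the inclusion of the orbit induces an isomorphism (in particular an injection) on $\pi_1$; exactness then gives $\pi_2\big((\mathbb C^*)^{n-1},T^{n-1}\big)=0$. Any holomorphic disk in $w^{-1}(c_0)$ with boundary on $L_q\cap w^{-1}(c_0)$ therefore represents the trivial class, hence is null-homotopic, hence has zero symplectic area, hence is constant, which is the last assertion.

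The argument is essentially bookkeeping with the toric data, so I do not expect a genuine obstacle; the one place that needs a moment of care is confirming that $S_q(\theta)$ really sits inside $w^{-1}(c_0)$ as a $T_{\tilde N}$-orbit of the standard action, since this is what makes the deformation retraction, and hence the $\pi_1$-injectivity, available — and for that one uses precisely the $T_{\tilde N}$-invariance of $w=\chi^{m_0}$ together with $\langle v_i,m_0\rangle=1$.
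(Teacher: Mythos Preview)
Your proposal is correct and is precisely the intended argument: the paper does not give a separate proof but simply states that the proposition is ``clear from the definition,'' and what you have written is exactly the definitional unwinding one would do---reading off $w(L_q)=C_q$ from the description $L_q=\bigsqcup_\theta S_q(\theta)$, using $(w)=\sum_i D_i$ and the choice of $m_0$ in a basis of $M$ to identify $w^{-1}(c_0)\cong(\mathbb C^*)^{n-1}$ for $c_0\neq 0$, and then invoking the long exact sequence of the pair together with the fact that a $T_{\tilde N}$-orbit is a deformation retract of $(\mathbb C^*)^{n-1}$. The final step (trivial relative class $\Rightarrow$ zero area $\Rightarrow$ constant) is also standard and correctly stated.
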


\begin{lem}
	\label{Maslov_zero_locate_lem}
	For $q=(q_1,q_2)\in B_0$, the Gross fiber $L_q=\pi^{-1}(q)$ bounds a non-constant Maslov index zero holomorphic disk if and only if $q_2=0$, i.e. $q\in H$.
	Moreover, the image of a Maslov index zero holomorphic disk bounded by $L_q$ for $q\in H$ is always contained in the divisor $w^{-1}(0)=\bigcup_i D_i$.
\end{lem}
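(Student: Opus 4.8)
The plan is to reduce the whole statement to the behaviour of the holomorphic function $w$ along a holomorphic disk, combined with Auroux's Maslov‑index formula. First I would record that $\mu(\beta)=2\,\beta\cdot\mathscr E$ for every $\beta\in\pi_2(X,L_q)$: this holds because $L_q$ is special Lagrangian for the holomorphic volume form $\Omega=(w-\epsilon)^{-1}d\zeta_0\wedge\cdots\wedge d\zeta_{n-1}$, which has a simple pole along $\mathscr E=\{w=\epsilon\}$ and is holomorphic and nowhere vanishing on $X\setminus\mathscr E$ (see \cite{AuTDual}, \cite{CLL12}). Note also that $L_q\cap\mathscr E=\varnothing$ for $q\in B_0$ (since $q\notin\partial B$), so $\beta\cdot\mathscr E$ is a well-defined nonnegative integer for holomorphic $\beta$.

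Next I would take a non-constant holomorphic disk $u\colon(\mathbb D,\partial\mathbb D)\to(X,L_q)$ in a class $\beta$ and study $f:=w\circ u\colon\mathbb D\to\mathbb C$, whose boundary values lie on the circle $C_{q_2}:=\{\,|z-\epsilon|=\sqrt{\epsilon^2+q_2}\,\}$ by the description $L_q=\bigsqcup_\theta S_q(\theta)$. The argument then splits in two. If $f$ is non-constant, I would apply the maximum principle to the subharmonic function $|f-\epsilon|^2$ to get $f(\mathbb D)\subseteq\overline{D(\epsilon,\sqrt{\epsilon^2+q_2})}$, hence, by the open mapping theorem, $f\colon\mathbb D\to D(\epsilon,\sqrt{\epsilon^2+q_2})$ is a proper non-constant holomorphic map, i.e.\ a branched cover of some degree $d\ge1$; since $\epsilon$ is a regular value of $w$, the function $w-\epsilon$ vanishes to first order along $\mathscr E$, so by the argument principle $\beta\cdot\mathscr E=\#f^{-1}(\epsilon)=d\ge1$, and therefore $\mu(\beta)=2d\ge2$. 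If instead $f\equiv c_0$ for a constant $c_0\in C_{q_2}$, then $u$ is a holomorphic disk contained in $w^{-1}(c_0)$ with boundary on $L_q\cap w^{-1}(c_0)=S_q(\theta)$; Proposition \ref{L_r_intersect_divisor_prop} forces $c_0=0$ (otherwise $u$ is constant), hence $\sqrt{\epsilon^2+q_2}=\epsilon$, i.e.\ $q_2=0$, and $u(\mathbb D)\subseteq w^{-1}(0)=\bigcup_iD_i$, which is disjoint from $\mathscr E$, so $\beta\cdot\mathscr E=0$ and $\mu(\beta)=0$. Together these two cases give the ``only if'' direction and the ``moreover'' clause: a non-constant Maslov-zero disk bounded by $L_q$ can occur only for $q_2=0$, and any such disk lands in $\bigcup_iD_i$.

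For the ``if'' direction I would argue conversely: if $q=(q_1,0)$ with $q_1\in H_i=\mathrm{int}\,\pr(P_i)$, then $L_q\cap D_i=\mu_{D_i}^{-1}(q_1)$ is a Lagrangian $T^{n-1}$-orbit of the toric manifold $D_i$ over the interior point $q_1$, so by Cho--Oh it bounds the non-constant holomorphic disks $u_{ik}$ ($k\in\mathcal I(i)$). Viewed inside $X$ via $D_i\hookrightarrow X$, each $u_{ik}$ is a non-constant holomorphic disk bounded by $L_q$ whose image lies in $D_i\subseteq w^{-1}(0)$, hence (by the computation above, or directly since $\beta\cdot\mathscr E=0$) has Maslov index $0$.

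I expect the one place needing care is the case ``$f=w\circ u$ non-constant'': one has to check that $f$ is smooth up to $\partial\mathbb D$ so that the maximum/open-mapping arguments apply verbatim, and, more importantly, that the branched-cover degree $d$ really coincides with the intersection number $\beta\cdot\mathscr E$ — which relies on $\mathscr E$ being smooth with $w-\epsilon$ of vanishing order exactly one, i.e.\ on choosing $\epsilon$ so that $\epsilon$ is a regular value of $w$. The remaining ingredients — Auroux's Maslov-index formula, Proposition \ref{L_r_intersect_divisor_prop}, and the Cho--Oh disks $u_{ik}$ — are cited, so no further work is needed there.
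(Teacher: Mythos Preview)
Your argument is correct and is precisely the standard one: Auroux's Maslov-index formula $\mu(\beta)=2\,\beta\cdot\mathscr E$, followed by the maximum-principle dichotomy for $w\circ u$ (non-constant forces $\mu\ge2$; constant forces the image into $w^{-1}(0)$ and hence $q_2=0$), with the Cho--Oh toric disks $u_{ik}\subset D_i$ providing the converse. The paper does not give its own proof here but simply cites \cite[Lemma 4.27]{CLL12} and \cite[Lemma 5.4]{AuTDual}, which carry out exactly this line of reasoning, so your proposal matches the intended argument.
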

\begin{proof}
	This is basically proved in \cite[Lemma 4.27]{CLL12}; see also \cite[Lemma 5.4]{AuTDual}.
\end{proof}

\begin{cor}
	\label{Maslov_zero_bdry_cor}
	Let $u:(\mathbb D,\partial \mathbb D) \to (X,L_q)$ be a Maslov index zero holomorphic disk. Then, $q\in H$, and the boundary $\partial u: \partial \mathbb D\to L_q$ is contained in $L_q\cap w^{-1}(0)=S_q(\pi)\cong T^{n-1}$. In particular, for any Maslov zero class $\gamma \in\pi_2(X,L_q)$, the evaluation map $\ev: \mathcal M_{1,\gamma}(L_q) \to L_q$ is supported in $S_q(\pi)\cong T^{n-1}$.
\end{cor}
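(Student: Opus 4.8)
The plan is to deduce everything from Lemma \ref{Maslov_zero_locate_lem} and Proposition \ref{L_r_intersect_divisor_prop}. The two opening assertions are immediate: if $L_q$ bounds a non-constant Maslov index zero disk $u$, then Lemma \ref{Maslov_zero_locate_lem} forces $q_2=0$, hence $q\in H$ (since $q\in B_0$ and $q_2=0$ already excludes $q\in\partial B$ and $\Pi\times\{0\}$), and it also puts the image of $u$ inside $w^{-1}(0)=\bigcup_i D_i$, so $\partial u(\partial\mathbb D)\subset L_q\cap w^{-1}(0)$. What remains is to identify this intersection with $S_q(\pi)$ and then to upgrade the boundary statement to the evaluation map, which is where the stable-disk (bubbled) configurations enter.

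For the identification I would invoke Proposition \ref{L_r_intersect_divisor_prop} with $c_0=0$: the set $L_q\cap w^{-1}(0)$ is either empty or equal to $S_q(\theta)$ for the $\theta$ with $0=\epsilon+e^{i\theta}\sqrt{\epsilon^2+q_2}$. Since $q\in H$ gives $q_2=0$, so $\sqrt{\epsilon^2+q_2}=\epsilon$, the relation becomes $\epsilon(1+e^{i\theta})=0$, i.e. $\theta=\pi$; hence $L_q\cap w^{-1}(0)=S_q(\pi)$ (and it is non-empty, as it already contains $\partial u$). To check $S_q(\pi)\cong T^{n-1}$, I would use that $q_1$ lies in exactly one wall chamber $H_i$ (the $H_i$ are pairwise disjoint and avoid $\Pi$), so every point of $w^{-1}(0)$ with $\tilde\mu_X=q_1$ lies on $D_i$ but on no smaller stratum $D_i\cap D_k$; thus $S_q(\pi)=\mu_{D_i}^{-1}(q_1)$ is a fibre over an interior point of $\pr(P_i)$, hence a free $T_{\tilde N}$-orbit, i.e. an $(n-1)$-torus (\S\ref{sss_moment_map}, \S\ref{sss_Gross_fibration_defn}). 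This settles the boundary claim for a smooth disk.

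For the evaluation-map statement I would take a stable disk $u$ in a non-trivial class $\gamma$ with $\mu(\gamma)=0$ and break its domain into disk and sphere components. Semipositivity of $\pi$ gives Maslov index $\ge 0$ for each disk component and $c_1\ge 0$ for each sphere component; since the Maslov index is additive over components and sums to $0$, every disk component has Maslov index $0$ and every sphere has $c_1=0$. Lemma \ref{Maslov_zero_locate_lem} then places the image of each disk component in $w^{-1}(0)$, hence its boundary in $L_q\cap w^{-1}(0)=S_q(\pi)$ by the previous step. The boundary marked point of $u$ lies on the boundary arc of one of these disk components, so $\ev(u)\in S_q(\pi)$; ranging over $u$ gives $\ev(\mathcal M_{1,\gamma}(L_q))\subset S_q(\pi)$ (and if $q\notin H$ the moduli space is empty by Lemma \ref{Maslov_zero_locate_lem}, so there is nothing to prove). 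I expect the only mildly delicate point to be this reduction to bubbled configurations — namely checking that the Maslov index distributes over components so that Lemma \ref{Maslov_zero_locate_lem} can be applied to each piece; once that is in place, everything else is a direct computation with Proposition \ref{L_r_intersect_divisor_prop}.
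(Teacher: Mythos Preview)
Your proposal is correct and is exactly the argument the paper has in mind: the paper states this corollary without proof, treating it as an immediate consequence of Lemma~\ref{Maslov_zero_locate_lem} and Proposition~\ref{L_r_intersect_divisor_prop}, and you have simply written out those details. One small imprecision: the inequality $c_1\ge 0$ for sphere components is not really a consequence of semipositivity as defined here (which concerns stable disks), but in this appendix $X$ is toric Calabi--Yau, so $c_1(X)=0$ and sphere bubbles contribute zero to the Maslov index; your additivity argument then goes through unchanged.
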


\subsubsection{Lagrangian isotopy}
\label{sss_Lag_isotopy}
Let $q=(q_1,q_2)\in B_0$. Consider the smooth Lagrangian isotopy
$\mathscr L(t):=\mathscr L_q(t):=
\big\{ x\in X \mid \tilde\mu_X(x)=q_1; \quad |w(x)-t|^2=\epsilon^2+q_2 
\big\}$, $ t\in [0,\epsilon]$.
Note that $\mathscr L_q(\epsilon)=L_q=\pi^{-1}(q)$. When $t=0$, we get a Lagrangian $T_N$-orbit
$\mathscr L_q:= \mathscr L_q(0)$.
By \cite[Lemma 4.29]{CLL12},
when $q \in B_+$, all the Lagrangians $\mathscr L_q(t)$ in the isotopy do not admit non-trivial Maslov-zero holomorphic disks. Moreover, when $q \in B_+$, every $\mathscr L_q(t)$ is contained in $T_N^{\mathbb C}\cong (\mathbb C^*)^n$.

On the other hand, given a path $\sigma: \oi \to B_0$, we denote the Lagrangian isotopy among the fibers over $\sigma$ by $\mathscr L_\sigma$.
It gives rise to the isomorphisms
$
\mathscr P_\sigma: \pi_2(X,L_{\sigma(0)})\to \pi_2(X,L_{\sigma(1)})$
and
$
\mathscr P_\sigma : \pi_1(L_{\sigma(0)}) \to \pi_1(L_{\sigma(1)})
$ such that $\mathscr P_\sigma\circ \partial = \partial\circ \mathscr P_\sigma$.

\subsection{Monodromy}
Recall that $T_N^{\mathbb C}\cong (\mathbb C^*)^n$ is an open dense complex torus in $X$.
Recall that $\beta_i$ denotes the class of a small closed disk transversal to the toric divisor $D_i=\mu_X^{-1}(T_i)$.
Then, $\pi_2(X, T_N^{\mathbb C}) = \mathbb Z \langle \beta_0,\beta_1,\dots,\beta_d \rangle$.
The $\pi_1(T_N^{\mathbb C})$ is naturally isomorphic to $N$ in that every $v\in N$ determines a one-parameter subgroup of $T_N^{\mathbb C}$.
For clarity, we always stick to the following specific coordinate: for the previously chosen bases $e_i\in N$ and $e_i'\in M$, we have an isomorphism
\begin{equation}
\label{pi_1_T_N_C}
\pi_1(T_N^{\mathbb C}) \cong \mathbb Z^n
\end{equation}
by declaring that the class $[\gamma]$ for a loop $\gamma\subset T_N^{\mathbb C}$ corresponds to the $n$-tuple 
$\big(\deg (\chi^{e_1'}\circ \gamma), \dots, \deg (\chi^{e_{n-1}'}\circ \gamma), \deg (w\circ \gamma)\big)$ in $\mathbb Z^n$.
A useful observation is that 
\begin{equation}
\label{degree(w-e)_eq}
\deg (w\circ \gamma)=
\begin{cases}
\deg (w\circ \gamma-\epsilon) \qquad & \text{when $\gamma \subset \pi^{-1}(B_+)$} \\
0 \qquad & \text{when $\gamma\subset \pi^{-1}(B_-\cup \partial B)$}
\end{cases}
\end{equation}

Next, we consider the boundary homomorphism
$
\partial: \pi_2(X, T_N^{\mathbb C})\equiv \mathbb Z^{d+1} \to \pi_1(T_N^{\mathbb C})\equiv \mathbb Z^n$.
We write $v_i^k:=\langle v_i, e'_k \rangle \in \mathbb Z$ for $0\le k\le n-1$; particularly $v_i^0=\langle v_i,m_0\rangle =1$ for $0\le i\le d$.
Then, $v_i=v_i^0 e_0+\cdots v_i^{n-1}e_{n-1}\in N$.
The principal divisor is $(\chi^{e'_k})= \sum_{i=0}^d v_i^k D_i$.
By definition, the class $\partial\beta_i$ only goes around the toric divisor $D_i$ once; thus, we have $\deg ( \chi^{e_k'} \circ \partial\beta_i)= v_i^k$ and $\deg ( w \circ \partial\beta_i)= v_i^0=1$. To sum up, respecting the identification $\pi_1(T_N^{\mathbb C})= \mathbb Z^n$ in (\ref{pi_1_T_N_C}), we have
$
\partial\beta_i= (v_i^1,v_i^2,\dots, v_i^{n-1}, 1) \in \mathbb Z^n$.
Furthermore, we can take an identification $\mathbb Z^n\cong N$ via $(m_1,\dots,m_{n-1},m_0)\mapsto m_1e_1+\cdots +m_{n-1}e_{n-1}+m_0e_0$ for which we exactly obtain $\partial\beta_i=v_i$. So, we often abuse the notations and write $v_i=(v_i^1,\dots,v_i^{n-1},1)$.
In special,
$v_0=e_0=(0,\dots,0,1)
$.

For a loop in $B_0$ that only passes through the two wall components $H_i$ and $H_j$ once, one can show that the monodromy is given by the matrix
\begin{equation}
\begin{bmatrix}
1 & 0 & \cdots &\cdots & 0& v_j^1- v_i^1 \\
& 1 & \cdots &\cdots & 0 & v_j^2-v_i^2 \\
&    & \ddots &   &\vdots & \vdots \\
&	  &             &    1     & 0  & v_j^{n-2}-v_i^{n-2} \\
&	  &             &         & 1  & v_j^{n-1}-v_i^{n-1} \\
& & & & & 1
\end{bmatrix}_{n\times n}
\end{equation}

When $q \in B_+$ or $q \in B_-$, we have an inclusion map
$j_q: L_q \xhookrightarrow{} T_N^{\mathbb C} \subset X$ which induces a group homomorphism
$
j_{q*}: \pi_1(L_q)\to  \pi_1(T_N^{\mathbb C})\equiv \mathbb Z^n
$.
By (\ref{pi_1_T_N_C}), the $k$-th coordinate of $j_{q*}[\gamma]$ for a loop $\gamma\subset L_q$ is $\deg (\chi^{e'_k}\circ \gamma)$ for $1\le k\le n-1$, and its $n$-th coordinate is $\deg (w\circ \gamma)$.
Also, the inclusion map
$s_q^\theta: S_q(\theta)\xhookrightarrow{} L_q$
gives a group embedding
$s_{q*}^\theta:
\pi_1(S_q(\theta)) \xhookrightarrow{} \pi_1(L_q)$.

\begin{lem}
	\label{s_q_theta_describe_lem}
	Fix $\theta\in[0,2\pi)$. For any $q\in B_+$ or $q\in B_-$, 
	we have $\im ( j_{q*} \circ s_{q*}^\theta) = \mathbb Z^{n-1}\times \{0\}$. Especially, 	the class $\partial (\beta_i-\beta_j)$ is contained in the image of the embedding $\pi_1(S_q(\theta))\xhookrightarrow{}\pi_1(L_q)$.
\end{lem}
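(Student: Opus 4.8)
The plan is to reduce both assertions to the description of a Gross fiber as an $S^1$-family of $T_{\tilde N}$-orbits, $L_q=\bigsqcup_{\theta}S_q(\theta)$ (\S\ref{sss_Gross_fibration_defn}), together with the basic fact that $w=\chi^{m_0}$ is constant along each $S_q(\theta)$, equal to $\epsilon+e^{i\theta}\sqrt{\epsilon^2+q_2}$. First I would prove the identity $\im(j_{q*}\circ s_{q*}^\theta)=\mathbb Z^{n-1}\times\{0\}$. For ``$\subseteq$'': any loop $\gamma$ in $S_q(\theta)$ has $w\circ\gamma$ constant, so $\deg(w\circ\gamma)=0$, and by the coordinate convention (\ref{pi_1_T_N_C}) this says exactly that the last coordinate of $j_{q*}(s_{q*}^\theta[\gamma])$ vanishes. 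For ``$\supseteq$'': $S_q(\theta)\cong T_{\tilde N}\cong T^{n-1}$, and $\pi_1(S_q(\theta))$ is generated by the loops $\lambda_k$ swept out by the one-parameter subgroups of $T_{\tilde N}$ attached to $e_1,\dots,e_{n-1}\in\tilde N$; for these $\deg(\chi^{e'_l}\circ\lambda_k)=\langle e_k,e'_l\rangle=\delta_{kl}$ for $1\le l\le n-1$ since $\{e_l\}$ and $\{e'_l\}$ are dual bases, so the $j_{q*}(s_{q*}^\theta[\lambda_k])$ are precisely the standard generators of $\mathbb Z^{n-1}\times\{0\}$.

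For the ``Especially'' part I would use the topological splitting $\pi_1(L_q)=\im(s_{q*}^\theta)\oplus\mathbb Z$. Indeed the $T_{\tilde N}$-action on $L_q$ is free with orbit space the $\theta$-circle, so $L_q\to S^1$ is a principal $T^{n-1}$-bundle, necessarily trivial since $T^{n-1}$ is connected; moreover the projection onto the $\mathbb Z$-summand is the homomorphism $\alpha\mapsto\deg\big((w-\epsilon)\circ\alpha\big)$, because the bundle map $L_q\to S^1$ is $x\mapsto\arg(w(x)-\epsilon)$. Writing $\alpha\cdot\mathscr E:=\deg\big((w-\epsilon)\circ\alpha\big)$ (legitimate as an intersection pairing with $\mathscr E=\{w=\epsilon\}$ because $w\ne\epsilon$ on $L_q$ for $q\in B_\pm$), we get $\alpha\in\im(s_{q*}^\theta)$ iff $\alpha\cdot\mathscr E=0$. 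So it suffices to show $\beta_i\cdot\mathscr E=\beta_j\cdot\mathscr E$ for all $i,j$. When $q\in B_+$ this is immediate: $\beta_i\cdot\mathscr E=\deg\big((w-\epsilon)\circ\partial\beta_i\big)=\deg(w\circ\partial\beta_i)=\langle v_i,m_0\rangle=1$, using (\ref{degree(w-e)_eq}) and the standing hypothesis $\langle v_i,m_0\rangle=1$.

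When $q\in B_-$ the classes $\beta_i\in\pi_2(X,L_q)$ are obtained by parallel transport $\mathscr P_\sigma$ along a path $\sigma$ in $B_0$, and since $\mathscr E=\pi^{-1}(\partial B)$ is disjoint from every fiber over $\sigma$ (as $\sigma$ avoids $\partial B\subset\Gamma$), the intersection number with $\mathscr E$ is unchanged by $\mathscr P_\sigma$; hence $\beta_i\cdot\mathscr E=1$ there too, so $(\beta_i-\beta_j)\cdot\mathscr E=0$ and $\partial(\beta_i-\beta_j)\in\im(s_{q*}^\theta)$. I expect this last, Chekanov, case to be the main obstacle: over $B_-$ the inclusion $j_q$ kills the $\theta$-circle (the winding of $w$ about $0$ drops to $0$), so $j_{q*}$ is no longer injective and one cannot deduce $\partial(\beta_i-\beta_j)\in\im(s_{q*}^\theta)$ from $j_{q*}\big(\partial(\beta_i-\beta_j)\big)\in\mathbb Z^{n-1}\times\{0\}$ alone; the $\theta$-circle component has to be controlled separately, and the one robust way I see to do it is through the invariance of the pairing with $\mathscr E$ under the wall-crossing parallel transport, which itself rests on $\mathscr E$ being disjoint from the toric divisors that $\mathscr L_\sigma$ meets while crossing $H$.
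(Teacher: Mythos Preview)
Your argument for the first assertion, $\im(j_{q*}\circ s_{q*}^\theta)=\mathbb Z^{n-1}\times\{0\}$, is exactly the paper's: the inclusion ``$\subseteq$'' comes from $w$ being constant on $S_q(\theta)$, and ``$\supseteq$'' from the fact that the one-parameter orbits through $e_1,\dots,e_{n-1}$ hit the standard generators. The paper states only the ``$\subseteq$'' step explicitly and leaves ``$\supseteq$'' implicit in its choice of the basis loops $\gamma_i$.

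For the ``Especially'' clause your treatment is more careful than the paper's. The paper's written proof stops after the first assertion, tacitly relying on $\partial\beta_i=v_i=(\ast,\dots,\ast,1)$ so that $\partial(\beta_i-\beta_j)$ lands in $\mathbb Z^{n-1}\times\{0\}=\im(j_{q*}\circ s_{q*}^\theta)$; for $q\in B_+$ this is enough because $j_{q*}$ is an isomorphism (Proposition~\ref{j_q_describe_prop}), and in practice that is the only case the paper ever invokes (the application in the main theorem is on or near the wall, approached from $B_+$ via Fukaya's trick). You are right, however, that for $q\in B_-$ the map $j_{q*}$ has a one-dimensional kernel, so one cannot read off membership in $\im(s_{q*}^\theta)\subset\pi_1(L_q)$ from the image in $\mathbb Z^n$. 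Your remedy---splitting $\pi_1(L_q)$ by the projection $\alpha\mapsto\deg\big((w-\epsilon)\circ\alpha\big)$ and tracking this as the intersection number with $\mathscr E$, which is preserved under $\mathscr P_\sigma$ because $\mathscr E=\pi^{-1}(\partial B)$ is disjoint from every fiber along $\sigma$---is a clean and correct way to close that gap. In short: same proof for the first part; for the second, you supply an argument the paper omits, at the cost of importing the parallel-transport description of $\beta_i$ on the Chekanov side (which the paper only sets up later in \S\ref{sss_Lag_isotopy} and Lemma~\ref{pi2_monodromy_lem}).
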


\begin{proof}
	Take a set $\{\gamma_1,\dots,\gamma_{n-1}\}$ of loops in $S_q (\theta)$ which give rise to a basis of $\pi_1(S_q(\theta))\cong \im s_{q*}^{\theta}\subset \pi_1(L_q)$.
	Note that $\tilde  N=\mathbb Z \{ e_1,\dots, e_{n-1}\}$, and we may assume that for each $1\le i\le n-1$, the loop $\gamma_i$ is the $S^1$-orbit of the action of $S^1\cdot e_i  \subset  T_{\tilde N}\equiv T^{n-1}$.
	For an arbitrary loop $\gamma$ in $S_q(\theta)$, the $w\circ \gamma$ is constant, and so $\deg w\circ \gamma=0$.
	By (\ref{pi_1_T_N_C}), we conclude $\im( j_{q*}\circ s_{q*}^\theta) = \mathbb Z^{n-1}\times \{0\}$.
\end{proof}

\begin{prop}
	\label{j_q_describe_prop}
	When $q\in B_+$, we have an isomorphism $j_{q*}:\pi_1(L_q)\to\mathbb Z^n$.
	When $q\in B_-$, the $j_{q*}$ is an embedding such that $\im j_{q*}=\mathbb Z^{n-1}\times \{0\}$; in particular, the rank of the kernel of $j_{q*}$ is one.	 
	Moreover, given a path $\sigma$ in $B_+$ (resp. in $B_-$) between $q'=\sigma(0)$ and $q''=\sigma(1)$, the isomorphism $\mathscr P_\sigma: \pi_1(L_{q'})\to \pi_1(L_{q''})$ induced by the Lagrangian isotopy $\mathscr L_\sigma$ satisfies that $j_{q''*}\circ \mathscr P_\sigma= j_{q'*}$.
\end{prop}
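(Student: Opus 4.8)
The plan is to treat the two chambers separately, exploiting the fact that every Lagrangian isotopy entering the argument sits inside the open dense torus $T_N^{\mathbb C}\cong(\mathbb C^*)^n$, and then to read off the naturality clause for free.

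For $q\in B_+$, I would run the Lagrangian isotopy $\mathscr L_q(t)$, $t\in[0,\epsilon]$, of \S\ref{sss_Lag_isotopy}, which connects $L_q=\mathscr L_q(\epsilon)$ to the $T_N$-orbit $\mathscr L_q=\mathscr L_q(0)$ and which, by \cite[Lemma 4.29]{CLL12}, stays inside $T_N^{\mathbb C}$ throughout. Restricting the ambient isotopy to $L_q$, the maps $L_q\to T_N^{\mathbb C}$ obtained at the various times $t$ form a homotopy, so under the canonical identifications $\pi_1(\mathscr L_q(t))\cong\pi_1(L_q)$ supplied by the isotopy the induced map on $\pi_1$ does not depend on $t$; at $t=\epsilon$ it is $j_{q*}$, and at $t=0$ it is the map induced by the inclusion of a $T_N$-orbit, which is a deformation retract of $T_N^{\mathbb C}$ and hence induces a $\pi_1$-isomorphism. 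Composing with the coordinate identification (\ref{pi_1_T_N_C}) then shows $j_{q*}\colon\pi_1(L_q)\to\mathbb Z^n$ is an isomorphism.

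For $q\in B_-$ the same isotopy would cross $w^{-1}(0)$, so instead I would pin down the image of $j_{q*}$ directly. The inclusion $\mathbb Z^{n-1}\times\{0\}\subseteq\im j_{q*}$ is exactly the content of Lemma \ref{s_q_theta_describe_lem} applied to one slice $S_q(\theta)$ of $L_q=\bigsqcup_\theta S_q(\theta)$. For the reverse inclusion, $w$ sends $L_q$ onto the circle $\{\,|z-\epsilon|=\sqrt{\epsilon^2+q_2}\,\}$, which for $q\in B_-$ has radius $<\epsilon$ and therefore does not wind around the origin (equivalently, invoke Proposition \ref{L_r_intersect_divisor_prop}, or the second line of (\ref{degree(w-e)_eq})); hence $\deg(w\circ\gamma)=0$ for every loop $\gamma\subset L_q$, which by (\ref{pi_1_T_N_C}) says that the last coordinate of $j_{q*}[\gamma]$ vanishes. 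Thus $\im j_{q*}=\mathbb Z^{n-1}\times\{0\}$, and since $\pi_1(L_q)\cong\mathbb Z^n$ has rank $n$, the rank of $\ker j_{q*}$ must be one.

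Finally, given a path $\sigma$ contained in $B_+$ or in $B_-$, every fiber $L_{\sigma(s)}$ again lies in $T_N^{\mathbb C}$ (for $B_-$ because the circle swept by $w$ has radius $<\epsilon$), so the Lagrangian isotopy $\mathscr L_\sigma$ restricts to a homotopy of maps into $T_N^{\mathbb C}$ from $j_{\sigma(0)}$ to $j_{\sigma(1)}$ precomposed with the identification defining $\mathscr P_\sigma$; passing to $\pi_1$ yields $j_{q''*}\circ\mathscr P_\sigma=j_{q'*}$. I expect the only step needing genuine care to be the $B_-$ case, where the ``collapse onto a toric orbit'' trick is unavailable and one must instead combine the winding-number vanishing of $w$ with Lemma \ref{s_q_theta_describe_lem}; once the image is identified, the kernel-rank statement and the naturality clause are purely formal.
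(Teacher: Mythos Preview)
Your proposal is correct. For the $B_-$ case and the naturality clause, your argument is essentially identical to the paper's: both invoke Lemma~\ref{s_q_theta_describe_lem} for the $\mathbb Z^{n-1}\times\{0\}$ part and then use that the circle $|z-\epsilon|=\sqrt{\epsilon^2+q_2}$ does not enclose the origin when $q_2<0$, so the last coordinate of $j_{q*}$ vanishes; likewise both obtain naturality from the observation that the isotopy stays inside $T_N^{\mathbb C}$ (the paper phrases this as $\chi^{e'_k}$ and $w$ having no zeros or poles there, which is the same thing).

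Where you diverge is the $B_+$ case. The paper handles $B_+$ and $B_-$ uniformly: it picks a loop $\gamma'$ hitting each slice $S_q(\theta)$ once, notes that $w\circ\gamma'$ traces the circle $|z-\epsilon|=\sqrt{\epsilon^2+q_2}$, and reads off $\deg(w\circ\gamma')=\pm 1$ when $q_2>0$; together with Lemma~\ref{s_q_theta_describe_lem} this forces $j_{q*}$ to be surjective, hence an isomorphism. You instead invoke the isotopy $\mathscr L_q(t)$ of \S\ref{sss_Lag_isotopy} down to a genuine $T_N$-orbit, which sits in $T_N^{\mathbb C}$ as a deformation retract. Your route is slightly more conceptual and avoids any explicit degree computation, at the cost of importing the fact (recorded in \S\ref{sss_Lag_isotopy}) that the whole isotopy stays inside $T_N^{\mathbb C}$ when $q\in B_+$. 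The paper's route is more hands-on but treats both chambers with a single sentence. Either way the content is minimal.
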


\begin{proof}
	\setlength{\parskip}{0em}
	By Lemma \ref{s_q_theta_describe_lem}, it suffices to understand the degree of a loop $\gamma'\subset L_q$ that meets every sub-torus $S_q(\theta)$ exactly once in order to completely describe the $j_{q*}$. Indeed, it is easy to see that $w\circ \gamma'$ forms the circle $|z-\epsilon|=\sqrt{\epsilon^2+q_2}$ in $\mathbb C^*$. So, we know $\deg (w\circ \gamma')$ is equal to $\pm 1$ if $q_2>0$ and equal to $0$ if $q_2<0$. Moreover, note that the maps $\chi^{e'_k}$ and $w$ have no zeros or poles in $\pi^{-1}(B_+)$ or $\pi^{-1}(B_-)$. So, the $\deg (\chi^{e'_k}\circ \gamma')$ and $\deg (w\circ \gamma')$ keep unchanged along the Lagrangian isotopy.
\end{proof}

\begin{defn-prop}
	\label{hat_beta_wp_defn}
	When $q\in B_-$, there exists a generator, denoted by $\hat\beta=\hat\beta_q$, in the rank-one group $\pi_2(T_N^{\mathbb C}, L_q)\subset \pi_2(X,L_q)$ such that $\hat\beta\cdot \mathscr E=1$.
\end{defn-prop}

\begin{proof}
	Denote $\mathbb D'=\{z\in\mathbb C\mid |z-\epsilon|\le \sqrt{\epsilon^2+q_2}\}$, and the holomorphic function $w$ restricts to a $T^{n-1}$-fibration on $L_q$ over the circle $\partial \mathbb D'$ (Proposition \ref{L_r_intersect_divisor_prop}). Then, we take a section $\gamma:\partial \mathbb D'\to L_q$ of $w|_{L_q}$ over $\partial \mathbb D'$ which by definition intersects every $S_q(\theta)$ exactly once. By Lemma \ref{s_q_theta_describe_lem} and Proposition \ref{j_q_describe_prop}, we have $j_{q*}(\gamma)=0$, that is, $\gamma$ is contractible in $T_N^{\mathbb C}$. Hence, we can fill it into a disk $s:\mathbb D'\to T_N^{\mathbb C}$ such that $\partial s=\gamma$.
	By deforming the disk $s$ within $T_N^{\mathbb C}=(\mathbb C^*)^n$, we may assume $w\circ s=\id$. Since $\mathscr E=w^{-1}(\epsilon)$ and $\epsilon \in\mathbb D'$, the topological intersection number $[s]\cdot \mathscr E$ is either $+1$ or $-1$.
	Finally, we define $\hat\beta$ to be either $[s]$ or $-[s]$ such that $\hat\beta\cdot \mathscr E=1$.
\end{proof}

For clarity, we use $q_+$ and $q_-$ to represent some base points in $B_+$ and $B_-$ respectively. Also, we often abbreviate $L_+=L_{q_+}$ and $L_-=L_{q_-}$.
Recall that a path $\sigma$ with $\sigma(0)=q_-$ and $\sigma(1)=q_+$ induces an isomorphism
$\mathscr P_\sigma : \pi_2(X,L_-)\to \pi_2(X,L_+)$.
Fix $0\le i\le d$.

\begin{lem}
	\label{pi2_monodromy_lem}
	Let $\sigma_i$ be a path in $B_0$ that passes through the wall component $H_i$. Then $
	\mathscr P_{\sigma_i} (\hat\beta) =\beta_i$.
\end{lem}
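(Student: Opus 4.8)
My plan is to pin down the class $\mathscr P_{\sigma_i}(\hat\beta)\in\pi_2(X,L_+)$ purely by its intersection numbers with the toric prime divisors $D_0,\dots,D_d$, and then invoke the (standard toric) fact that a relative class is determined by this data. Concretely, in the exact sequence $0\to\pi_2(X)\to\pi_2(X,L_+)\xrightarrow{\partial}\pi_1(L_+)\to0$ one has $\partial\beta=\sum_j(\beta\cdot D_j)v_j$ for every relative class (because $\sum_j(\alpha\cdot D_j)v_j=0$ for $\alpha\in\pi_2(X)$ — vacuous when $X=\mathbb C^n$), while $\alpha\mapsto(\alpha\cdot D_j)_j$ is injective on $\pi_2(X)$; hence $\beta\mapsto(\beta\cdot D_j)_j$ is injective on $\pi_2(X,L_+)$, so it is enough to check $\mathscr P_{\sigma_i}(\hat\beta)\cdot D_j=\delta_{ij}$ for all $j$, these being exactly the intersection numbers of $\beta_i$.

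The geometric input I would draw from \S\ref{sss_Gross_fibration_defn} is the following: over $B_+\cup B_-$ a Gross fiber is disjoint from $w^{-1}(0)=\bigcup_jD_j$, and along a path crossing the wall component $H_i$ the fiber meets $\bigcup_jD_j$ only at the single instant $q_2=0$, and there it meets $D_i$ alone (the orbit $S_q(\pi)$ over a point of $H_i\subset\tilde\mu_X(D_i)$ lies in $D_i$ and in no other $D_k$). Thus the Lagrangian isotopy $\mathscr L_{\sigma_i}$ stays disjoint from $D_k$ for $k\neq i$ and from $\mathscr E=\pi^{-1}(\partial B)$. Since $\beta\cdot D$ is a locally constant integer along any isotopy disjoint from a divisor $D$, and $\hat\beta$ has a representative inside $T_N^{\mathbb C}$ (so $\hat\beta\cdot D_k=0$ and, by Definition-Proposition \ref{hat_beta_wp_defn}, $\hat\beta\cdot\mathscr E=1$), I obtain $\mathscr P_{\sigma_i}(\hat\beta)\cdot D_k=0=\beta_i\cdot D_k$ for $k\neq i$ and $\mathscr P_{\sigma_i}(\hat\beta)\cdot\mathscr E=1=\beta_i\cdot\mathscr E$.

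The only intersection number that can jump across the wall, $m:=\mathscr P_{\sigma_i}(\hat\beta)\cdot D_i$, I would fix using the identity of divisors $\sum_jD_j-\mathscr E=\mathrm{div}(w/(w-\epsilon))$. Since $w$ and $w-\epsilon$ are nowhere zero on $L_+$, intersecting a relative class with this principal divisor is the homomorphism $(w/(w-\epsilon))_*\circ\partial:\pi_2(X,L_+)\to\pi_1(\mathbb C^*)=\mathbb Z$; in particular $\beta\cdot\sum_jD_j-\beta\cdot\mathscr E$ depends only on $\partial\beta$ and is additive in it. By the previous step $\partial\mathscr P_{\sigma_i}(\hat\beta)=m\,v_i=m\,\partial\beta_i$, whence $\big(\mathscr P_{\sigma_i}(\hat\beta)\cdot\sum_jD_j\big)-\big(\mathscr P_{\sigma_i}(\hat\beta)\cdot\mathscr E\big)=m\big(\beta_i\cdot\sum_jD_j-\beta_i\cdot\mathscr E\big)$; the left-hand side equals $m-1$ while the right-hand side equals $m(1-1)=0$, so $m=1$. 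Therefore $\mathscr P_{\sigma_i}(\hat\beta)\cdot D_j=\delta_{ij}=\beta_i\cdot D_j$ for all $j$, and by the first paragraph $\mathscr P_{\sigma_i}(\hat\beta)=\beta_i$.

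The step I expect to be the real obstacle is the content behind $m=1$: that the single wall-crossing interaction with $w^{-1}(0)$ is with $D_i$, once, and positively. The argument above routes around this via the principal divisor $w/(w-\epsilon)$ together with the normalization $\hat\beta\cdot\mathscr E=1$, but one must verify carefully that the winding numbers of $w$ and of $w-\epsilon$ along the fibers behave as claimed on the two sides of $H_i$ (this is essentially the analysis of \cite[\S5]{AuTDual} and \cite{CLL12}). A more hands-on alternative would be to deform the section disk of $w|_{L_-}$ over $\partial\mathbb D'$ that represents $\hat\beta$ along $\mathscr L_{\sigma_i}$ and to check directly that it acquires exactly one transverse positive intersection with $D_i$ as $q$ passes through $H_i$; the boundary is then recovered from $\partial\mathscr P_{\sigma_i}(\hat\beta)=\sum_j(\mathscr P_{\sigma_i}(\hat\beta)\cdot D_j)v_j=v_i$, and one concludes $\mathscr P_{\sigma_i}(\hat\beta)=\beta_i$ as before.
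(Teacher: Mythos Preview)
Your argument is correct, and the overall strategy---determine $\mathscr P_{\sigma_i}(\hat\beta)$ by its intersection numbers with the toric divisors, using that the isotopy $\mathscr L_{\sigma_i}$ stays away from $D_\ell$ for $\ell\neq i$---coincides with the paper's. The difference lies only in how you pin down the remaining coefficient. The paper writes $\mathscr P_{\sigma_i}(\hat\beta)=\sum_j k_j\beta_j$, deduces $k_\ell=0$ for $\ell\neq i$ exactly as you do, and then finishes in one line by invoking invariance of the Maslov index under Lagrangian isotopy: $2k_i=\mu(k_i\beta_i)=\mu(\hat\beta)=2$, so $k_i=1$. Your route via the principal divisor $\mathrm{div}\big(w/(w-\epsilon)\big)=\sum_j D_j-\mathscr E$ is valid but more elaborate than necessary; in fact, once you know $\mathscr P_{\sigma_i}(\hat\beta)=m\beta_i$ (which already follows from $\mathscr P_{\sigma_i}(\hat\beta)\cdot D_\ell=0$ for $\ell\neq i$ together with $\beta_j\cdot D_\ell=\delta_{j\ell}$), your own observation $\mathscr P_{\sigma_i}(\hat\beta)\cdot\mathscr E=1$ and $\beta_i\cdot\mathscr E=1$ give $m=1$ immediately, without the winding-number analysis. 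So the Maslov-index argument and the $\mathscr E$-intersection argument are interchangeable one-line conclusions; the paper chooses the former, while your proposal takes an unnecessarily circuitous path to the latter.
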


\begin{proof}
	\setlength{\parskip}{0em}
	Assume $\mathscr P_{\sigma_i}(\hat\beta)= k_0\beta_0 +\cdots +k_d\beta_d$, where $k_i\in\mathbb Z$. Since the Lagrangian isotopy $\mathscr L_{\sigma_i}$ is totally contained in $\pi^{-1}(B_+\cup H_i\cup B_-)$ and $D_\ell$ is contained in $\pi^{-1}(\overline{H_\ell\times \{0\}})$, the intersection number of a disk class with $D_\ell$ is preserved along $\mathscr L_{\sigma_i}$ whenever $\ell\neq i$. Thus, we have
	$
	\hat \beta \cdot D_\ell = \mathscr P_{\sigma_i}(\hat\beta) \cdot D_\ell$ for any $\ell\neq i$.
	Clearly, $\hat\beta\cdot D_\ell=0$. As $\beta_k \cdot D_\ell=\delta_{k\ell }$, we conclude that $k_\ell=0$ for any $\ell\neq i$. Hence, $\mathscr P_{\sigma_i}(\hat\beta)=k_i\beta_i$ for some $k_i\in\mathbb Z$.
	Moreover, the Lagrangian isotopy preserves the Maslov index, which implies that $2k_i=\mu(k_i\beta_i)=\mu(\hat\beta)=2$. Thus, $k_i=1$. The proof is now complete.
\end{proof}


\subsection{Mirror construction}

\subsubsection{Set-up}
Denote by $(X^\vee,W^\vee,\pi^\vee)$ the family Floer mirror of $(X,\pi)$ (\S \ref{sss_family_Floer_general}).
By Lemma \ref{Maslov_zero_locate_lem}, the Gross's fibration $\pi$ restricted over $B_+$ or $B_-$ does not encounter any Maslov-zero disk. Then, by Corollary \ref{trivial_shifting_cor},
$
X_\pm^\vee:= (\pi^\vee)^{-1}(  B_\pm) \cong \trop^{-1}(  B_\pm)$.
Let $W_\pm^\vee$ denote the restrictions of $W^\vee$ over the two chambers $X^\vee_\pm$.
The (one-pointed genus-zero) open GW invariant is denoted by $\mathsf  n_\beta= \mathsf n_{\beta,J}$ (\S \ref{ss_OGW}).
We review several results as below which are known before; see e.g. \cite{AuTDual} or \cite{CLL12}.

\begin{lem}
	\label{OGW_Gross_B_+_lem}
	Suppose $q \in B_+$. If $\mathsf n_{\beta}\neq 0$,
	then there exists $\alpha\in H^\eff_2(X)$ and $0\le j\le d-1$ such that $\beta=\beta_j+\alpha$.
	Moreover, we have $\mathsf n_{\beta_j}=1$.
\end{lem}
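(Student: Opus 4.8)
The plan is to push the problem off the Gross fiber onto an ordinary toric moment fiber of $X$, where the statement is classical, by means of the Lagrangian isotopy of \S\ref{sss_Lag_isotopy}. Fix $q=(q_1,q_2)\in B_+$ and recall the Lagrangian isotopy $\mathscr L_q=(\mathscr L_q(t))_{t\in[0,\epsilon]}$ with $\mathscr L_q(\epsilon)=L_q$ and $\mathscr L_q(0)$ a Lagrangian $T_N$-orbit, hence a moment-map fiber $\mu_X^{-1}(p)$ for an interior point $p$ of the moment polytope $P$. Since $q_2>0$, every $\mathscr L_q(t)$ lies in the open orbit $T_N^{\mathbb C}$ (so it is disjoint from each toric divisor $D_i$) and, by \cite[Lemma 4.29]{CLL12}, bounds no non-constant Maslov-zero holomorphic disk. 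Using the isotopy I would identify $\pi_2(X,L_q)\cong\pi_2(X,\mathscr L_q(0))$; because the whole isotopy avoids every $D_i$, this identification preserves all intersection numbers with the toric divisors, so it carries the basic disk class $\beta_j$ of $L_q$ to the basic class $\beta_j$ of $\mathscr L_q(0)$ and maps $H^\eff_2(X)$ onto itself.

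Next, since no member of the isotopy bounds a Maslov-zero disk, every Maslov index two class is non-separable, so the invariants $\mathsf n_{\beta}$ depend only on $J$ (cf. \S\ref{sss_OGW_defn}), and Theorem \ref{OGW_inv_isotopy_thm} applies to give $\mathsf n_{\beta,L_q}=\mathsf n_{\beta,\mathscr L_q(0)}$ for every such $\beta$. Now $\mathscr L_q(0)$ is a toric moment fiber, so I would invoke the structure theorem for toric Lagrangian fibers --- the non-compact Calabi-Yau counterpart of \S\ref{ss_compact_toric}, i.e. \cite[Theorem 11.1]{FOOOToricOne}, which applies because $c_1(X)=0$ leaves no sphere of negative Maslov index: $\mathsf n_{\beta,\mathscr L_q(0)}\neq 0$ forces $\beta=\beta_j+\alpha$ with $\beta_j$ a basic disk class and $\alpha\in H^\eff_2(X)$, while $\mathsf n_{\beta_j,\mathscr L_q(0)}=1$ by \cite{Cho_Oh}. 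Transporting both conclusions back along the isotopy yields the lemma.

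The point deserving the most care --- the ``main obstacle'' --- is the appeal to the toric enumerative machinery for the \emph{non-compact} variety $X$ equipped with the chosen toric K\"ahler form, together with the identification of the algebraically defined $\mathsf n_\beta$ (a priori dependent on the harmonic contraction used for the homological perturbation) with the geometric disk count of \cite{FOOOToricOne,Cho_Oh}. Both are standard and already underlie \cite{CLL12,AuTDual}; everything else is bookkeeping with the isotopy-induced identification of $\pi_2$ and of the distinguished classes.
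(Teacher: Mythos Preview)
Your proposal is correct and follows essentially the same route as the paper: use the Lagrangian isotopy $\mathscr L_q(\cdot)$ of \S\ref{sss_Lag_isotopy} (which carries no Maslov-zero disks when $q\in B_+$) together with Theorem \ref{OGW_inv_isotopy_thm} to transport the question to the toric moment fiber $\mathscr L_q(0)$, and then invoke the toric structure results from \cite{FOOOToricOne,Cho_Oh} as summarized at the end of \S\ref{ss_compact_toric}. The additional care you flag about the non-compact setting and the identification of the algebraically defined $\mathsf n_\beta$ with the geometric count is a fair caveat, but the paper takes exactly the same shortcut and defers it to the cited literature.
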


\begin{proof}
	The aforementioned Lagrangian isotopy $\mathscr L_q(\cdot)$ (\S \ref{sss_Lag_isotopy}) induces $\mathscr P: \pi_2(X,L_q)\cong \pi_2(X, \mathscr L_q)$.
	By Theorem \ref{OGW_inv_isotopy_thm}, $\mathsf n_\beta= \mathsf n_{\mathscr P(\beta)}$.
	So, it reduces to the toric case (end of \S \ref{ss_compact_toric}).
\end{proof}


\begin{lem}
	\label{holo_disk_B_-_lem}
	Fix $q\in B_-$. A non-trivial holomorphic disk $u:(\mathbb D,\partial\mathbb D)\to (X,L_q)$
	is contained in
	$S_-:=\pi^{-1}(B_-\cup\partial B)=\{x\in X \mid |w(x)-\epsilon|<\epsilon \}$.
	Moreover, there exists some $k>0$ so that $[u]=k\hat\beta$.
\end{lem}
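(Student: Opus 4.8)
The plan is to pin down a non-trivial holomorphic disk $u$ using the global holomorphic function $w=\chi^{m_0}\colon X\to\mathbb C$ together with the topology of the open orbit $T_N^{\mathbb C}$.

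First I would record that, since $\langle v_i,m_0\rangle=1>0$ for every ray $v_i$, the character $\chi^{m_0}$ extends to a regular function $w\colon X\to\mathbb C$ whose zero locus is exactly $\bigcup_i D_i=X\setminus T_N^{\mathbb C}$. Hence $S_-=\{x\in X:|w(x)-\epsilon|<\epsilon\}$ avoids $\{w=0\}$ (because $|0-\epsilon|=\epsilon$) and therefore lies in $T_N^{\mathbb C}$; moreover $L_q\subset S_-$, since on $L_q$ one has $|w-\epsilon|=r$ with $0<r:=\sqrt{\epsilon^2+q_2}<\epsilon$ for $q=(q_1,q_2)\in B_-$. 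Now for a holomorphic $u\colon(\mathbb D,\partial\mathbb D)\to(X,L_q)$, the composite $w\circ u\colon\mathbb D\to\mathbb C$ is holomorphic, so $|w\circ u-\epsilon|^2$ is subharmonic; since it equals $r^2<\epsilon^2$ on $\partial\mathbb D$, the maximum principle yields $|w(u(z))-\epsilon|^2\le r^2<\epsilon^2$ for all $z\in\mathbb D$, that is, $u(\mathbb D)\subset S_-$. This proves the first assertion.

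Next, because $u(\mathbb D)\subset S_-\subset T_N^{\mathbb C}$ and $L_q\subset S_-$, the class $[u]$ lifts to $\pi_2(T_N^{\mathbb C},L_q)$. Since $T_N^{\mathbb C}\cong(\mathbb C^*)^n$ is aspherical, the homotopy exact sequence of the pair identifies $\pi_2(T_N^{\mathbb C},L_q)$ with $\ker\bigl(j_{q*}\colon\pi_1(L_q)\to\pi_1(T_N^{\mathbb C})\bigr)$, which by Proposition \ref{j_q_describe_prop} has rank one when $q\in B_-$; combined with Definition-Proposition \ref{hat_beta_wp_defn} this group is $\mathbb Z\hat\beta$. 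Hence $[u]=k\hat\beta$ for some $k\in\mathbb Z$. To get $k>0$: since $r>0$, $L_q$ is disjoint from $\mathscr E=\{w=\epsilon\}$, so $u(\mathbb D)\not\subset\mathscr E$, and positivity of intersection — equivalently, the argument principle applied to $w\circ u-\epsilon$, which does not vanish on $\partial\mathbb D$ — gives $0\le[u]\cdot\mathscr E=k(\hat\beta\cdot\mathscr E)=k$. If $k=0$ then $[u]=0$, whence $E(u)=\omega\cap[u]=0$ and $u$ is constant, contrary to hypothesis; therefore $k\ge 1$.

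The argument is essentially routine for Gross fibrations, so there is no serious obstacle; the one step that needs care is the containment $u(\mathbb D)\subset S_-$, which relies on $w$ being globally regular (so there are no poles to worry about along $u(\mathbb D)$) and on the boundary value $r^2=\epsilon^2+q_2$ being \emph{strictly} less than $\epsilon^2$, i.e. precisely on the condition $q\in B_-$.
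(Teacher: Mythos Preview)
Your proof is correct and follows essentially the same route as the paper: apply the maximum principle to $w\circ u$ to confine $u$ to $S_-\subset T_N^{\mathbb C}$, then use that $\pi_2(T_N^{\mathbb C},L_q)\cong\mathbb Z\hat\beta$ to write $[u]=k\hat\beta$, and finally use positivity of intersection with $\mathscr E$ to control the sign of $k$. The only minor difference is in the last step: the paper invokes Proposition~\ref{L_r_intersect_divisor_prop} to see that $w\circ u$ is non-constant, hence surjects onto the small disk $\mathbb D'$ and genuinely meets $\epsilon$, giving $k>0$ directly; you instead obtain $k\ge 0$ from positivity and then exclude $k=0$ via the energy identity $E(u)=\omega\cap[u]$. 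Both arguments are equally valid and of the same length.
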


\begin{proof}
	Write $q=(q_1,q_2)$ and $\mathbb D'=\{ z\in \mathbb C\mid |z-\epsilon|\le \sqrt{\epsilon^2+q_2}\}$ as before. We observe that $w\circ u: (\mathbb D,\partial\mathbb D)\to (\mathbb D',\partial\mathbb D')$ is a holomorphic map such that $w\circ u(\partial\mathbb D)\subset \partial\mathbb D'$.
	By maximum principle, $|w\circ u-\epsilon|\le \sqrt{\epsilon^2+q_2}<\epsilon$ on $\mathbb D$. Thus, the image of $u$ is always contained in $S_-$, which implies that $u$ cannot meet any toric divisor $D_i$, as $w^{-1}(0)=\bigcup_i D_i$. It follows that the class $[u]$ lives in the subgroup $\pi_2(T_N^{\mathbb C},L_q)\cong \mathbb Z \cdot \hat\beta$ (Definition-Proposition \ref{hat_beta_wp_defn}), say $[u]=k\hat\beta$.
	It suffices to show $k>0$.
	Since the $u$ is non-trivial, the $w\circ u$ cannot be a constant function due to Proposition \ref{L_r_intersect_divisor_prop}. So, we have $w\circ u(\mathbb D)=\mathbb D'$.
	Accordingly, the $u$ has at least one intersection point with the divisor $\mathscr E=w^{-1}(\epsilon)$. Moreover, since $u$ is holomorphic, any intersection is positive. So, $0<[u]\cdot \mathscr E= k \hat\beta\cdot \mathscr E=k$.
\end{proof}

\begin{lem}
	\label{holo_sphere_lem}
	Any non-trivial holomorphic sphere $h:\mathbb P^1\to X$ is contained in $w^{-1}(0)=\bigcup_i D_i$. In particular, the sphere $h$ does not intersect with $S_-$.
\end{lem}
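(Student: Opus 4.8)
The plan is to exploit that $w=\chi^{m_0}$ is a \emph{regular} function on $X$, not merely a rational one. Indeed, by the Calabi--Yau hypothesis of \S\ref{sss_toric_data} we have $\langle v_i,m_0\rangle=1$ for every ray $v_i$, so the principal divisor of the character $\chi^{m_0}$ is $(\chi^{m_0})=\sum_{i=0}^d\langle v_i,m_0\rangle D_i=\sum_{i=0}^d D_i\ge 0$. Hence $w$ extends from the open dense torus to a holomorphic function $w:X\to\mathbb C$ whose zero locus is exactly $\bigcup_{i} D_i$, i.e. $w^{-1}(0)=X\setminus T_N^{\mathbb C}$.

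First I would compose with $w$: for a holomorphic map $h:\mathbb P^1\to X$, the function $w\circ h:\mathbb P^1\to\mathbb C$ is holomorphic on the compact Riemann surface $\mathbb P^1$, hence constant, say $w\circ h\equiv c_0$. If $c_0\neq 0$ then $h(\mathbb P^1)\subset w^{-1}(c_0)\subset X\setminus w^{-1}(0)=T_N^{\mathbb C}\cong(\mathbb C^*)^n$; composing $h$ with any of the $n$ coordinate characters of $(\mathbb C^*)^n$ would then produce a nowhere-vanishing holomorphic function on $\mathbb P^1$, which must be constant. Thus every coordinate of $h$ would be constant, so $h$ itself would be constant, contradicting the non-triviality assumption. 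Therefore $c_0=0$, which says precisely that $h(\mathbb P^1)\subset w^{-1}(0)=\bigcup_i D_i$.

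For the ``in particular'' clause I would simply observe that on $w^{-1}(0)$ one has $|w-\epsilon|\equiv\epsilon$, so that $w^{-1}(0)$ is disjoint from $S_-=\pi^{-1}(B_-\cup\partial B)=\{x\in X\mid |w(x)-\epsilon|<\epsilon\}$; combined with the previous paragraph this yields $h(\mathbb P^1)\cap S_-=\varnothing$.

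The only step that requires genuine care --- the \textbf{main obstacle}, such as it is --- is the first one: one must verify that $w$ is globally regular on $X$ with zero divisor exactly the sum of \emph{all} toric boundary divisors, so that its vanishing locus coincides with the complement of the dense torus. This is precisely where the Calabi--Yau normalization $\langle v_i,m_0\rangle=1$ for all $i$ enters, and it is also what makes the rigidity of $(\mathbb C^*)^n$ (no non-constant holomorphic $\mathbb P^1\to(\mathbb C^*)^n$) applicable. Everything else is just the maximum principle for holomorphic functions on a compact Riemann surface.
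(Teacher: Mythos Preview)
Your proof is correct and follows essentially the same line as the paper's: both compose $h$ with $w$ to obtain a constant $c_0$ and then argue that $c_0\neq 0$ would force $h$ to land in a torus where it must be trivial. The only cosmetic difference is that you work in the ambient torus $T_N^{\mathbb C}\cong(\mathbb C^*)^n$ and use its coordinate characters directly, whereas the paper identifies the level set $w^{-1}(c_0)\cong(\mathbb C^*)^{n-1}$ (via Proposition~\ref{L_r_intersect_divisor_prop}) and cites $\pi_2((\mathbb C^*)^{n-1})=0$.
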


\begin{proof}
	The holomorphic map $w\circ h: \mathbb P^1 \to \mathbb C$ must be constant, say $w\circ h=c_0$.
	In other words, the sphere $h$ is contained in the level set $w^{-1}(c_0)$.
	If $c_0 \neq 0$, then the level set $w^{-1}(c_0)$ is topologically $(\mathbb C^*)^{n-1}$. But, we know $\pi_2( (\mathbb C^*)^{n-1})=0$, which implies $h$ must be trivial.
	So, we have $c_0=0$, and thus the image of $h$ is contained in $w^{-1}(0)=\bigcup_i D_i$.
\end{proof}

\begin{lem}
	\label{OGW_Gross_B_-_weak_lem}
	If $q\in B_-$ and $\beta\in \pi_2(X,L_q)$ with $\mu(\beta)=2$, then
	$\mathsf n_{\beta}\neq 0$
	only if $\beta=\hat\beta$.
\end{lem}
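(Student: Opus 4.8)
The plan is to unwind the definition of $\mathsf n_\beta$ and reduce the statement to a purely geometric claim about $J$-holomorphic stable disks bounded by a Chekanov-type fibre, which can then be settled by Lemmas \ref{holo_disk_B_-_lem} and \ref{holo_sphere_lem}. Recall $\mathsf n_\beta=\m_{0,\beta}/\one$, and that $\m_{0,\beta}$ is built by homological perturbation as a tree sum over decompositions $\beta=\sum_v\beta_v$ whose internal vertices carry the de Rham operators $\check\m_{k_v,\beta_v}$; such an operator vanishes unless $\mathcal M_{k_v+1,\beta_v}(J,L_q)\neq\varnothing$. Hence a nonzero $\mathsf n_\beta$ produces classes $\beta_v\in\pi_2(X,L_q)$, each represented by some $J$-holomorphic stable disk bounded by $L_q$, with $\sum_v\beta_v=\beta$ and $\sum_v\mu(\beta_v)=\mu(\beta)=2$. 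So it suffices to show: \emph{for $q\in B_-$, the class of any $J$-holomorphic stable disk bounded by $L_q$ is a nonnegative multiple of $\hat\beta$.}

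To prove this, let $u$ be such a stable disk, with connected nodal domain $\Sigma$. Every boundary arc of $\Sigma$ maps into the fibre $L_q$, on which $|w-\epsilon|$ is constant with value strictly less than $\epsilon$ (because $q\in B_-$), and $w\circ u$ is holomorphic on each component; so the maximum principle---applied componentwise and propagated through the interior nodes, exactly as in the proof of Lemma \ref{holo_disk_B_-_lem}---forces $|w\circ u-\epsilon|<\epsilon$ everywhere on $\Sigma$. In particular the image of $u$ avoids $w^{-1}(0)=\bigcup_i D_i$. By Lemma \ref{holo_sphere_lem} every non-constant holomorphic sphere in $X$ lies in $\bigcup_i D_i$, so every sphere component of $u$ is constant and contributes $0$ to the class. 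Each disk component of $u$ is an honest holomorphic disk bounded by $L_q$ whose image avoids $\bigcup_i D_i$, so its class lies in $\pi_2(T_N^{\mathbb C},L_q)=\mathbb Z\hat\beta$ (Definition-Proposition \ref{hat_beta_wp_defn}), and by Lemma \ref{holo_disk_B_-_lem} it is either constant or equal to $k\hat\beta$ with $k>0$. Summing over all components of $u$ gives $[u]\in\mathbb Z_{\ge 0}\hat\beta$, as claimed.

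Putting this back, each $\beta_v=k_v\hat\beta$ with $k_v\ge 0$, hence $\beta=N\hat\beta$ for some integer $N\ge 0$. Since $\mu(\hat\beta)=2$ (its Maslov index equals that of $\beta_i$ along the Lagrangian isotopy, by Lemma \ref{pi2_monodromy_lem}) and $\mu(\beta)=2$, we conclude $N=1$, i.e.\ $\beta=\hat\beta$.

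The only genuinely substantive point is the control of bubbling in the reduction: once one knows that every stable disk bounded by a Chekanov-type fibre stays inside $S_-=\{|w-\epsilon|<\epsilon\}$ and therefore cannot touch any toric divisor, the remainder is Maslov-index bookkeeping. I expect the propagation of the maximum principle across the nodes---which is what prevents a sphere bubble from running off into $\bigcup_i D_i$ and forces it to be constant---to be the only step requiring a little care.
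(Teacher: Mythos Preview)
Your proof is correct and follows essentially the same route as the paper: confine the disk components to $S_-=\{|w-\epsilon|<\epsilon\}$ by the maximum principle, use Lemma~\ref{holo_sphere_lem} together with connectedness of the domain to rule out nontrivial sphere bubbles, and finish by Maslov-index bookkeeping. Your extra unpacking of the homological-perturbation tree is harmless but not needed here, since for $q\in B_-$ there are no Maslov-zero disks (Lemma~\ref{Maslov_zero_locate_lem}), so any Maslov-two class is non-separable and $\mathsf n_\beta\neq 0$ already yields a single stable disk in class $\beta$; the paper's proof starts from that observation and otherwise matches yours, invoking the Calabi--Yau condition where you instead propagate the maximum principle through the nodes.
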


\begin{proof}
	Suppose $\uu$ is a holomorphic stable disk in the class $\beta$ that contributes to $\mathsf n_\beta$.
	Since $X$ is Calabi-Yau, discarding sphere bubbles in $\uu$ does not affect the Maslov index.
	So, by Lemma \ref{holo_disk_B_-_lem}, there is at most one non-trivial disk component, denoted by $u:(\mathbb D,\partial\mathbb D)\to (X,L_q)$, such that $[u]=\hat\beta$, and all the sphere bubbles have total Maslov index zero.
	By Lemma \ref{holo_disk_B_-_lem} again, the image of $u$ is contained in $S_-$.
	However, by Lemma \ref{holo_sphere_lem}, any non-trivial holomorphic sphere does not meet $S_-$. Thereby, the $\uu$ cannot possess any sphere bubbles, and $\beta=[\uu]=[u]=\hat\beta$. The proof is now complete.
\end{proof}

\subsubsection{Superpotentials}
By {\cite[Proposition 4.32]{CLL12}}, we can show $\mathsf n_{\hat\beta}=1$ by explicitly showing the existence and uniqueness of the holomorphic disk in the class $\hat\beta$.
We would like to give an alternative proof of this fact by using the family Floer theory, i.e. Theorem \ref{wall_cross_identity_intro_thm}.
Recall that $W_\pm^\vee$ denotes the restriction of $W^\vee$ on the chamber $X^\vee_\pm$. By Lemma \ref{OGW_Gross_B_+_lem}, using the fact that $\mathsf n_{\beta_j}=1$ \cite{Cho_Oh} yields
\begin{equation}
\label{W_+_Gross_general_eq}
\textstyle
W_+^\vee = \sum_{j=0}^d T^{E(\beta_j)} Y^{\partial \beta_j}
\Big(
1+\sum_{ 0\neq \alpha\in H^\eff_2(X)} T^{E(\alpha)} \mathsf n_{\beta_j+\alpha}
\Big)
\end{equation}
Besides, by Lemma \ref{OGW_Gross_B_-_weak_lem}, we have
$
W_-^\vee= T^{E(\hat\beta)} Y^{\partial\hat\beta} \mathsf n_{\hat\beta}$.
Beware that the various classes $\beta_j$'s in the expression of $W_+^\vee$ actually depend on a base point $q\in B_+$. But, the difference between the local expression of $W_+^\vee$ over another base point $q'$ and the original one is exactly made up by the map $Y^\alpha\mapsto T^{\langle\alpha, q'-q\rangle}Y^\alpha$. The similar thing holds for $W_-^\vee$.
Fix $0\le i\le d$, and we take the path $\sigma=\sigma_i$ in $B_0$ that passes through the component $H_i$ so that $\sigma(0)=q_-\in B_-$ and $\sigma(1)=q_+\in B_+$.
We know
$\mathscr P_\sigma(\hat\beta)=\beta_i$
by Lemma \ref{pi2_monodromy_lem}, but for simplicity we would rather write
$\hat\beta =\beta_i$ in view of the Fukaya's trick.
Adopting this convention, we also define
$\gamma_\ell:=\beta_\ell-\hat \beta$ for $0\le \ell \le d, \ \ell \neq i$.

\subsubsection{Maslov index zero classes}
It is clear that the topological class $\gamma_\ell$ has Maslov index zero.
The subgroup $\pi_2'(X,L_0)$ in $\pi_2(X,L_0)\cong \mathbb Z^{d+1}$ consisting of Maslov index zero classes has codimension one and is spanned by all $\gamma_\ell$ ($\ell\neq i$). In particular, we have $\pi_2'(X,L_0) \cong \mathbb Z^d$ by associating to the tuple $\pmb k:=(k_0,k_1,\dots, k_{i-1},k_{i+1},\dots,k_d) \in\mathbb Z^d$ the class 
$
\gamma_{\pmb k}:=\sum_{\ell\neq i, \ 0\le \ell \le d} k_\ell\gamma_\ell
$.
We write $\pmb k\ge 0$ if $\pmb k\in\mathbb Z_{\ge 0}^d$, i.e. all $k_\ell\ge 0$; we write $\pmb k>0$ if $\pmb k\ge 0$ and $\pmb k\neq (0,\dots,0)$.
If the class $\gamma_{\pmb k}$ is represented by a non-trivial Maslov index zero holomorphic stable disk, then
$E(\gamma_{\pmb k}) > 0$ and $\pmb k>0$. 
We speculate that the converse is also true. However, we do not bother to explicitly find those Maslov index zero holomorphic disks;
the following weaker statement is enough for our purpose. (Remark that Lemma \ref{energy_inequality_lem} below is not essential and can be checked by hand in the concrete examples.)

\begin{lem}
	\label{energy_inequality_lem}
	In the above situation, for any $\ell\neq i$, we have $E(\beta_\ell)>E(\hat\beta)$, i.e. $E(\gamma_\ell)>0$.
\end{lem}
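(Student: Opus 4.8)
The plan is to reduce the inequality to the convexity of the tropical polynomial $\varphi$ --- concretely, to inequality (\ref{convex_imply_eq}) --- by first replacing the Gross fiber with a genuine Lagrangian toric orbit via the isotopy $\mathscr L_q(\cdot)$ of \S\ref{sss_Lag_isotopy} (compare the proof of Lemma \ref{OGW_Gross_B_+_lem}), after which the comparison of $E(\beta_\ell)$ and $E(\hat\beta)=E(\beta_i)$ becomes a comparison of the affine functionals $\langle v_\ell,\cdot\rangle-c_\ell$ and $\langle v_i,\cdot\rangle-c_i$ at a point of $P$ lying above $H_i$. In a concrete example such as $X=\mathbb C^n$, where the $v_j$ and $c_j$ are explicit, one may instead simply write down the two energies and compare them by hand, as already remarked.

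Let $q_*\in H_i$ be the point at which the path $\sigma_i$ crosses the wall, and choose the Clifford base point $q_+\in B_+$ directly above it, so that its $\tilde M_{\mathbb R}$-coordinate equals $q_*\in H_i=\pr(P_i)^\circ$; put $L=L_{q_+}$. Recall that $\gamma_\ell=\beta_\ell-\hat\beta=\beta_\ell-\beta_i$ is Maslov-zero with $\partial\gamma_\ell=v_\ell-v_i\in\tilde N$, so that it suffices to prove $E(\gamma_\ell)>0$. Run the isotopy $\mathscr L_{q_+}(t)$, $t\in[0,\epsilon]$, from $L=\mathscr L_{q_+}(\epsilon)$ to the Lagrangian $T_N$-orbit $\mathscr L_{q_+}(0)=\mu_X^{-1}(\tilde q)$, with $\tilde q\in\mathrm{Int}\,P$ and $\pr(\tilde q)=q_*$ (see \S\ref{sss_Lag_isotopy}). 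This isotopy is not Hamiltonian, but its flux annihilates $\partial\gamma_\ell$: by Lemma \ref{s_q_theta_describe_lem} the class $\partial\gamma_\ell=\partial(\beta_\ell-\beta_i)$ lies in the image of $\pi_1(S_q(\theta))\hookrightarrow\pi_1(L)$, and since $\mathscr L_{q_+}(\cdot)$ fixes $\tilde\mu_X$ and keeps $w$ constant along each $T_{\tilde N}$-orbit, any loop contained in an orbit $S_q(\theta)$ sweeps a cylinder on which $\omega$ pulls back to zero. Hence the symplectic area of $\gamma_\ell$ is unchanged along the isotopy, and by the toric disk-area formula $E(\beta_j)=\langle v_j,\tilde q\rangle-c_j$ for $\mu_X^{-1}(\tilde q)$ (\S\ref{ss_compact_toric}),
\[
E(\gamma_\ell)=E(\beta_\ell)-E(\beta_i)=\langle v_\ell-v_i,\tilde q\rangle+c_i-c_\ell=\langle v_\ell-v_i,q_*\rangle+c_i-c_\ell ,
\]
the final equality holding because $v_\ell-v_i\in\tilde N$ pairs trivially with the $m_0$-direction, so only $\pr(\tilde q)=q_*$ contributes.

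It remains to show $\langle v_\ell-v_i,q_*\rangle+c_i-c_\ell>0$. As $\tilde q$ lies in $\mathrm{Int}\,P$ over $q_*\in\pr(P_i)^\circ$, the lowest point of $P$ above $q_*$ lies in the relative interior of the facet $P_i$; there $\langle v_i,\cdot\rangle-c_i$ vanishes, while $\langle v_\ell,\cdot\rangle-c_\ell>0$ for every $\ell\neq i$ because $q_*\notin\pr(P_\ell)$ --- indeed $\pr(P_i)^\circ$ is disjoint from $\pr(P_\ell)$ whenever $\ell\neq i$. Since $\big(\langle v_\ell,\cdot\rangle-c_\ell\big)-\big(\langle v_i,\cdot\rangle-c_i\big)=\langle v_\ell-v_i,\cdot\rangle+c_i-c_\ell$ depends only on the $\tilde M_{\mathbb R}$-coordinate (because $v_\ell-v_i\in\tilde N$), its value at that lowest point is $\langle v_\ell-v_i,q_*\rangle+c_i-c_\ell$, and since that point lies in the relative interior of $P_i$ the value equals $\langle v_\ell,\cdot\rangle-c_\ell>0$ there. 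Hence $\langle v_\ell-v_i,q_*\rangle+c_i-c_\ell>0$; equivalently, this is inequality (\ref{convex_imply_eq}) applied to $q_*\in\pr(P_i)$ and a point of $\pr(P_{\{i,\ell\}})$ when that face is nonempty, the general case following by linearity. Therefore $E(\gamma_\ell)>0$, i.e. $E(\beta_\ell)>E(\hat\beta)$.

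The one genuinely delicate point will be the flux computation of the middle step --- verifying that the symplectic areas of $\beta_i$ and $\beta_\ell$ vary by exactly the same amount along the non-Hamiltonian isotopy $\mathscr L_{q_+}(\cdot)$, so that their difference is preserved and equals its value for the toric orbit --- together with keeping track, under the identifications of \S\ref{sss_Lag_isotopy}, of the fact that $\mathscr L_{q_+}(0)$ is the $T_N$-orbit over a point $\tilde q\in\mathrm{Int}\,P$ with $\pr(\tilde q)=q_*\in H_i$. Once that is settled, the inequality is immediate from the description of the facet $P_i$ and the convexity of $\varphi$.
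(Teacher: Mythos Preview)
Your argument is correct and takes a different, more direct route than the paper's own proof. The paper argues in two stages: first it handles the \emph{adjacent} case $D_\ell\cap D_i\neq\varnothing$ by exhibiting an actual holomorphic disk in $D_i$ (the toric disk $u_{i\ell}$ of \S\ref{sss_moment_map}) with class $\gamma_\ell$, so that $E(\gamma_\ell)=E(u_{i\ell})>0$; then for general $\ell$ it telescopes along a straight segment in $\tilde M_{\mathbb R}$ from $H_i$ to $H_\ell$, reducing to adjacent pairs and using the convexity inequality (\ref{convex_imply_eq}) to show each summand decreases along the segment. Your approach bypasses both steps by transporting the problem to the genuine $T_N$-orbit $\mathscr L_{q_+}(0)$ via the isotopy of \S\ref{sss_Lag_isotopy}, reading off $E(\gamma_\ell)$ from the toric area formula, and then invoking the defining property of $H_i$ (the maximum in $\varphi$ is achieved uniquely at $i$) in one stroke. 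This is cleaner and avoids any case distinction; the price is the flux step, which the paper's argument does not need.

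That flux step is the only place where your write-up is a bit terse. The precise reason the swept cylinder has zero area is that the whole isotopy stays inside the level set $\tilde\mu_X^{-1}(q_*)$, and on a moment-map level set the orbit directions of $T_{\tilde N}$ span exactly the null foliation of $\omega$; since the $s$-direction of your cylinder is tangent to a $T_{\tilde N}$-orbit and the $t$-direction is tangent to the level set, $\omega$ vanishes on the cylinder. Stating it this way makes the ``$\omega$ pulls back to zero'' claim transparent. Also, your closing remark that the inequality ``is inequality (\ref{convex_imply_eq})'' is not quite accurate: (\ref{convex_imply_eq}) compares $\langle v_\ell-v_i,p'-p\rangle$ for $p\in\pr(P_i)$ and $p'\in\pr(P_\ell)$, whereas what you actually use (and correctly prove in the sentence before) is that a point in the relative interior of $P_i$ satisfies $\langle v_\ell,\cdot\rangle-c_\ell>0$ for every $\ell\neq i$. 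You can simply drop the reference to (\ref{convex_imply_eq}) there.
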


\begin{proof}
	To begin with, we claim that the lemma holds if $D_\ell\cap D_i\neq \varnothing$ for all $\ell\neq i$. In fact, we notice that the intersection $L':=L_0\cap D_i=L_0\cap w^{-1}(0)$ is exactly an orbit of the action of $T_{\tilde N}\cong T^{n-1}$ by Proposition \ref{L_r_intersect_divisor_prop}.
	So, as in \S \ref{sss_moment_map}, we can find a holomorphic disk $u$ contained in $D_i$ so that $u\cdot D_\ell=1$ and $u\cdot D_{\ell'}=0$ for a fixed $\ell$ and any $\ell\neq \ell' \neq i$. By considering intersection numbers, one can easily check that $[u]=\gamma_\ell=\beta_\ell-\hat\beta$. Since $E(u)>0$, we conclude that $E(\gamma_\ell)=E(\beta_\ell)-E(\hat\beta)>0$.
	
	In general, we first note that the purpose is a strict inequality. Thus, by Fukaya's trick, we may work with the Lagrangian fiber $L=L_+$ over a base point $q=q_+$ which is slightly above the wall. Specifically, we set $q=(q_1, \delta)\in\tilde M_{\mathbb R}\times \mathbb R$ for $q_1\in H_i$ and a sufficiently small $\delta>0$.
	Choose $q_1'\in H_\ell$, and draw a straight line segment $\mathfrak L$ in $\tilde M_{\mathbb R}\cong \mathbb R^{n-1}$ between $q_1$ and $q_1'$. Perturbing $q_1'$ slightly if necessary, we may assume $\mathfrak L$ is transversal to $\Pi$; then, suppose $\mathfrak L$ meets the wall components $\mathfrak L$ by $H_{\ell_0=i}, H_{\ell_1}, H_{\ell_2}, \dots, H_{\ell_{m-1}}$ and $H_{\ell_m=\ell}$ successively.
	Consider
	\[
	\beta_{\ell,q}-\hat\beta_q= \textstyle \sum_{k=0}^{m-1} (\beta_{\ell_{k+1},q}-\beta_{\ell_k,q})
	\]
	where for clarity we specify the base point $q$.
	It suffices to prove $E(\beta_{\ell_{k+1},q}) > E(\beta_{\ell_k,q})$ for any $0\le k\le m-1$.
	To show this, we pick up a point $q^k_1$ in $H_{\ell_k}\cap \mathfrak L$ and set $q^k=(q_1^k,\delta)\in B_+$. 
	Notice that $H_{\ell_k}$ and $H_{\ell_{k+1}}$ are adjacent along the line segment $\mathfrak L$, thus, $D_{\ell_k}\cap D_{\ell_{k+1}}\neq\varnothing$. Then, using the claim at the start of proof, we conclude that $E(\beta_{\ell_{k+1},q^k})>E(\beta_{\ell_k,q^k})$.
	Consider the line segment $\mathfrak L_+=(\mathfrak L, \delta)$ in $B_+$ between $q=(q_1,\delta)$ and $q':=(q_1',\delta)$, and then the point $q^k$ lies in $\mathfrak L_+$. Note that the line segment $\mathfrak L_+$ has the parametrization $q(t)=( (1-t) q+tq', \delta)$.
	Now, we claim that the function
	\[
	f: \oi \to\mathbb R,\quad  t\mapsto E(\beta_{\ell_{k+1}, q(t)})-E(\beta_{\ell_k,q(t)} )
	\]
	is decreasing in $t\in\oi$.
	In reality, we first have $E(\beta_{q(t_2)})-E(\beta_{q(t_1)})=\langle \partial\beta, q(t_2)-q(t_1)\rangle
	=
	(t_2-t_1) \cdot	\langle \partial\beta, (q'-q) \rangle$.
	It follows that $f'(t)=\langle \partial\beta_{\ell_{k+1}}-\partial\beta_{\ell_k} ,q'-q\rangle= \langle v_{\ell_{k+1}}-v_{\ell_k}, q'-q\rangle$. By (\ref{convex_imply_eq}), we deduce that $f'(t)<0$ and prove the claim.
	Now, the above claim implies $E(\beta_{\ell_{k+1},q})-E(\beta_{\ell_k,q})=f(0)> E(\beta_{\ell_{k+1},q^k}) -E(\beta_{\ell_k,q^k})>0$, which completes the proof.
\end{proof}

\subsubsection{Formal power series identities}
\label{sss_formal_series_ientity}

By Fukaya's trick we may assume the gluing map $\phi: \Lambda[[\pi_1(L_+)]]\to\Lambda[[\pi_1(L_-)]]$across the wall $H$ takes the form:
$Y^\alpha\mapsto Y^{\alpha} \exp \langle \alpha, \pmb F(Y)\rangle$,
where $\alpha \in \pi_1(L_+)\cong \pi_1(L_-)$ and $\pmb F=\sum_{\gamma\neq 0} \mC_\gamma T^{E(\gamma)} Y^{\partial\gamma}$. By definition, the coefficients of $\pmb F$ are given by the counts of Maslov-zero holomorphic disks and are all contained in $\Lambda_+$.
Note that the inverse $\phi^{-1}$ also takes this form.

\begin{thm}
	\label{wall_cross_identity_intro_thm}
	Fix $0\le i\le d$. There exists formal power series $F_k \ (0\le k\le d), G \in \Lambda[[\pi_1(L_q)]] \cong \Lambda[[Y_1^\pm,\dots, Y_n^\pm]]$ which only depend on the counts of Maslov index zero holomorphic stable disks and whose coefficients are all contained in $\Lambda_+$ such that
	\begin{equation}
	\label{wall_cross_1_intro_eq}
	\textstyle
	\mathsf n_{\hat\beta} = N_i \cdot \exp (F_i) + \sum_{\ell\neq i} T^{E(\gamma_\ell)} Y^{\partial \gamma_\ell} \cdot N_\ell \cdot \exp(F_\ell)
	\end{equation}
	and
	\begin{equation}
	\label{wall_cross_2_intro_eq}
	\textstyle
	\mathsf n_{\hat\beta}\cdot \exp(G) = N_i+ \sum_{\ell\neq i} T^{E(\gamma_\ell)} Y^{\partial \gamma_\ell} N_\ell
	\end{equation}
	where $E(\gamma_\ell)=\omega\cap \gamma_\ell>0$ and we define 
	$N_k :=\mathsf n_{\beta_k}+ \sum_{\alpha\neq 0} T^{E(\alpha)} Y^{\partial\alpha} \mathsf n_{\beta_k+\alpha}\in \Lambda_0$.
\end{thm}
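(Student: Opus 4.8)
The plan is to substitute the known closed forms of the two superpotentials $W_+^\vee$ and $W_-^\vee$ into the wall-crossing formula of \cite{Yuan_I_FamilyFloer}, applied in \emph{both} directions across the wall component $H_i$, and then read off the two identities by matching monomials. First I would record the shapes of the superpotentials. On the Clifford side, Lemma \ref{OGW_Gross_B_+_lem} reduces the computation to the toric case, and together with $\mathsf n_{\beta_j}=1$ it gives (\ref{W_+_Gross_general_eq}), which in the notation of the theorem reads $W_+^\vee=\sum_{j=0}^d T^{E(\beta_j)}Y^{\partial\beta_j}N_j$; here each $N_j$ lies in $\Lambda_0$ and carries no $Y$, since the sphere classes $\alpha$ entering $N_j$ have $\partial\alpha=0$ in the Calabi--Yau $X$. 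Adopting the Fukaya's-trick conventions $\hat\beta=\beta_i$ and $\gamma_\ell=\beta_\ell-\hat\beta$ of \S\ref{sss_formal_series_ientity}, and using $\partial\beta_\ell=\partial\hat\beta+\partial\gamma_\ell$, this becomes
\[
W_+^\vee=T^{E(\hat\beta)}Y^{\partial\hat\beta}\Big(N_i+\sum_{\ell\neq i}T^{E(\gamma_\ell)}Y^{\partial\gamma_\ell}N_\ell\Big).
\]
On the Chekanov side, Lemma \ref{OGW_Gross_B_-_weak_lem} gives $W_-^\vee=T^{E(\hat\beta)}Y^{\partial\hat\beta}\,\mathsf n_{\hat\beta}$, with $\mathsf n_{\hat\beta}$ a scalar.

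Next I would invoke the wall-crossing formula $\phi(W_+^\vee)=W_-^\vee$ of \cite{Yuan_I_FamilyFloer} for the gluing map $\phi: Y^\alpha\mapsto Y^\alpha\exp\langle\alpha,\pmb F\rangle$ across $H_i$ (the Fukaya trick having absorbed the Novikov shift $T^{\langle\alpha,\tilde q-q\rangle}$), where $\pmb F=\sum_{\gamma\neq 0}\mC_\gamma T^{E(\gamma)}Y^{\partial\gamma}$ depends only on the counts of Maslov index zero disks and has coefficients in $\Lambda_+$. Since $\phi$ is a $\Lambda$-algebra homomorphism, it fixes the $Y$-free scalars $N_j$; applying it term by term to $W_+^\vee$ and cancelling the invertible monomial $T^{E(\hat\beta)}Y^{\partial\hat\beta}$ yields
\[
\mathsf n_{\hat\beta}=N_i\exp\langle\partial\hat\beta,\pmb F\rangle+\sum_{\ell\neq i}T^{E(\gamma_\ell)}Y^{\partial\gamma_\ell}\,N_\ell\,\exp\langle\partial\beta_\ell,\pmb F\rangle,
\]
which is precisely (\ref{wall_cross_1_intro_eq}) once one sets $F_k:=\langle\partial\beta_k,\pmb F\rangle$ for $0\le k\le d$ (with $F_i=\langle\partial\hat\beta,\pmb F\rangle$ since $\hat\beta=\beta_i$).

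For (\ref{wall_cross_2_intro_eq}) I would use instead the reverse relation $\phi^{-1}(W_-^\vee)=W_+^\vee$. As noted in \S\ref{sss_formal_series_ientity}, $\phi^{-1}$ again has the form $Y^\alpha\mapsto Y^\alpha\exp\langle\alpha,\widetilde{\pmb F}\rangle$, with $\widetilde{\pmb F}$ once more built from Maslov-zero disk counts and with coefficients in $\Lambda_+$; applying it to $W_-^\vee=T^{E(\hat\beta)}Y^{\partial\hat\beta}\mathsf n_{\hat\beta}$ and cancelling the monomial gives $\mathsf n_{\hat\beta}\exp\langle\partial\hat\beta,\widetilde{\pmb F}\rangle=N_i+\sum_{\ell\neq i}T^{E(\gamma_\ell)}Y^{\partial\gamma_\ell}N_\ell$, i.e.\ (\ref{wall_cross_2_intro_eq}) with $G:=\langle\partial\hat\beta,\widetilde{\pmb F}\rangle$. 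This exhibits all the required series $F_k$ and $G$, all depending only on Maslov-zero disk counts and all with coefficients in $\Lambda_+$; and $E(\gamma_\ell)=\omega\cap\gamma_\ell>0$ is Lemma \ref{energy_inequality_lem}, which completes the list of assertions.

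The proof is short precisely because all the weight sits in the inputs (the structure of $W_\pm^\vee$ and the wall-crossing formula of \cite{Yuan_I_FamilyFloer}). The only points requiring care — rather than a genuine obstacle — are the consistent bookkeeping of Fukaya's trick (fixing the path $\sigma_i$ through $H_i$, the induced identifications of $\pi_2$ and $\pi_1$ of $L_\pm$, and the resulting convention $\hat\beta=\beta_i$, so that a single gluing map $\phi$ governs both of $W_\pm^\vee$ simultaneously), the $\Lambda$-linearity of $\phi$ that lets the sphere-bubble corrections $N_j$ pass through untouched, and the observation that (\ref{wall_cross_2_intro_eq}) genuinely needs the reverse map $\phi^{-1}$: it is not a rearrangement of (\ref{wall_cross_1_intro_eq}), since the factors $\exp\langle\partial\gamma_\ell,\pmb F\rangle$ appearing there do not disappear under such a manipulation.
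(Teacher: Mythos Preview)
Your proof is correct and follows exactly the paper's approach: apply $\phi(W_+^\vee)=W_-^\vee$ and $\phi^{-1}(W_-^\vee)=W_+^\vee$, expand using the explicit forms of $W_\pm^\vee$, divide through by $T^{E(\hat\beta)}Y^{\partial\hat\beta}$, and invoke Lemma \ref{energy_inequality_lem} for $E(\gamma_\ell)>0$. You have simply made explicit what the paper calls ``routine computations'', including the identifications $F_k=\langle\partial\beta_k,\pmb F\rangle$ and $G=\langle\partial\hat\beta,\widetilde{\pmb F}\rangle$ and the observation that the $N_j$ are $Y$-free (since $\partial\alpha=0$ for $\alpha\in H_2(X)$) and hence fixed by $\phi$.
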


\begin{proof}
	Because the transition maps of the mirror space must match $W_+^\vee$ and $W_-^\vee$, we have $W_-^\vee=\phi(W_+^\vee)$ and $W_+^\vee=\phi^{-1} (W_-^\vee)$. By routine computations with (\ref{W_+_Gross_general_eq}), they produce the two identities in (\ref{wall_cross_1_intro_eq}) and (\ref{wall_cross_2_intro_eq}) respectively.
	Moreover, we have $E(\gamma_\ell)>0$ by Lemma \ref{energy_inequality_lem}; also, $E(\alpha)>0$ for any nonzero $\alpha\in H^\eff_2(X)$.
	The proof of Theorem \ref{wall_cross_identity_intro_thm} is now complete.
\end{proof}

In either equation, comparing the energy-zero parts yields that
$\mathsf n_{\hat\beta}=\mathsf n_{\beta_i}$.
Moreover, as $\beta_i$ is a toric disk, we know $\mathsf n_{\beta_i}=1$ by \cite{Cho_Oh}. Hence, we give an alternative proof of the following fact:

\begin{cor}[{\cite[Proposition 4.32]{CLL12}}]
	\label{Wall_crosssing_intr_cor}
	$\mathsf n_{\hat\beta}=1$.
\end{cor}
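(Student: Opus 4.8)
The plan is to read the corollary straight off the formal power series identities in Theorem \ref{wall_cross_identity_intro_thm}, extracting their energy-zero parts; no new disk count is needed beyond that theorem. First I would record the two structural inputs that Theorem \ref{wall_cross_identity_intro_thm} supplies: every $F_k$ and $G$ has all coefficients in $\Lambda_+$, and the monomials on the right-hand sides carry strictly positive exponents, namely $E(\gamma_\ell)>0$ for $\ell\neq i$ (Lemma \ref{energy_inequality_lem}) and $E(\alpha)=\omega\cap\alpha>0$ for every nonzero $\alpha\in H^\eff_2(X)$; in particular each $N_k\in\Lambda_0$ has valuation-zero part equal to $\mathsf n_{\beta_k}$.

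Next I would apply the leading-term map sending $\sum_\alpha a_\alpha Y^\alpha\in\Lambda[[\pi_1(L_q)]]$ to the $T^0$-coefficient of its constant term $a_0\in\Lambda$ to both sides of (\ref{wall_cross_2_intro_eq}). On the left, $\mathsf n_{\hat\beta}$ is an ordinary number and $\exp(G)=1+(\text{terms of strictly positive valuation})$, so the image is exactly $\mathsf n_{\hat\beta}$. On the right, every summand $T^{E(\gamma_\ell)}Y^{\partial\gamma_\ell}N_\ell$ with $\ell\neq i$ dies because $E(\gamma_\ell)>0$, leaving only the leading term of $N_i$, which is $\mathsf n_{\beta_i}$. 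Hence $\mathsf n_{\hat\beta}=\mathsf n_{\beta_i}$. Applying the same extraction to (\ref{wall_cross_1_intro_eq}) gives the identical conclusion, since $\exp(F_k)=1+(\text{terms of strictly positive valuation})$.

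Finally I would invoke \cite{Cho_Oh}: the class $\beta_i$ is a toric disk class, represented by a Fredholm regular Maslov-two holomorphic disk with $\partial\beta_i=v_i$ and $\beta_i\cdot D_j=\delta_{ij}$ (see \S\ref{ss_compact_toric}, also Lemma \ref{OGW_Gross_B_+_lem}), so $\mathsf n_{\beta_i}=1$, and therefore $\mathsf n_{\hat\beta}=1$.

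I expect the only genuine obstacle to sit upstream, inside Theorem \ref{wall_cross_identity_intro_thm} itself: obtaining the identities (\ref{wall_cross_1_intro_eq})--(\ref{wall_cross_2_intro_eq}) rests on the wall-crossing formula of \cite{Yuan_I_FamilyFloer}, i.e.\ the fact that the gluing map $\phi$ intertwines $W_+^\vee$ with $W_-^\vee$ and has the exponential shape $Y^\alpha\mapsto Y^\alpha\exp\langle\alpha,\pmb F\rangle$ with $\pmb F$ assembled from Maslov-zero disk counts and supported in $\Lambda_+$. Granting that, the present corollary is a one-line comparison of leading coefficients.
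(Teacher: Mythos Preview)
Your proposal is correct and follows essentially the same approach as the paper: extract the energy-zero (valuation-zero) parts of the identities in Theorem~\ref{wall_cross_identity_intro_thm} to obtain $\mathsf n_{\hat\beta}=\mathsf n_{\beta_i}$, then invoke \cite{Cho_Oh} for $\mathsf n_{\beta_i}=1$. The paper's own argument is the one-line ``comparing the energy-zero parts'' that you have spelled out in detail.
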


What matters in Theorem \ref{wall_cross_identity_intro_thm} is not the existence of those formal power series but the geometric meaning that they are contributed by the counts of Maslov-zero disks.
This line of ideas inspires the main result of this paper.

\bibliographystyle{alpha}
\bibliography{mybib}

\end{document}